\documentclass[reqno,12pt,letterpaper]{amsart}
\usepackage{amsmath,amssymb,amsthm,graphicx,mathrsfs,url,bbm,array,enumerate}
\usepackage[dvipsnames]{xcolor}
\usepackage[colorlinks=true,linkcolor=Red,citecolor=Green]{hyperref}
\usepackage{tikz-cd}
\usepackage{tikz}
\usepackage{pgfplots}
\pgfplotsset{compat=1.18}
\usepackage{subfigure}
\usepackage{mathabx}
\usepackage{mathtools}

\usepackage[
  backend=biber,
    style=alphabetic,
    sorting=nyt,
    giveninits=true,
    maxnames=10
]{biblatex}
\DeclareFieldFormat{pages}{#1}
\renewbibmacro{in:}{%
  \ifentrytype{article}
    {}
    {\bibstring{in}%
     \printunit{\intitlepunct}}}
\DeclareFieldFormat
  [article,inbook,incollection,inproceedings,patent,thesis,unpublished]
  {title}{\mkbibemph{#1}}
\DeclareFieldFormat{journaltitle}{#1\isdot}
\DeclareFieldFormat[article]{volume}{\mkbibbold{#1}}
\DeclareFieldFormat[article]{number}{\bibstring{number}\addnbspace #1}

\renewbibmacro*{journal+issuetitle}{%
  \usebibmacro{journal}%
  \setunit*{\addspace}%
  \iffieldundef{series}
    {}
    {\newunit
     \printfield{series}%
     \setunit{\addspace}}%
  \printfield{volume}%
  \setunit{\addspace}%
  \usebibmacro{issue+date}%
  \setunit{\addcomma\space}%
 \printfield{number}%
  \setunit{\addcolon\space}%
  \usebibmacro{issue}%
  \setunit{\addcomma\space}%
  \printfield{eid}
  \newunit}

 \DeclareFieldFormat*{title}{#1}
 \DeclareFieldFormat{title}{#1}
 \DeclareFieldFormat
   [article,inbook,incollection,inproceedings,patent,thesis,unpublished]
   {title}{\mkbibquote{#1\isdot}}
 \DeclareFieldFormat
   [suppbook,suppcollection,suppperiodical]
   {title}{#1}

\newcolumntype{L}{>{$}l<{$}}

\def\?[#1]{\textbf{[#1]}\marginpar{\Large{\textbf{??}}}}
\newtheorem*{thm*}{Theorem}
\newtheorem{prop}{Proposition}

\newtheorem{defi}[prop]{Definition}
\newtheorem{lem}[prop]{Lemma}

\numberwithin{equation}{section}
\numberwithin{prop}{section}

\theoremstyle{definition}

\theoremstyle{remark}
\newtheorem*{rem}{Remark}

\renewcommand{\Re}{\mathop{\rm Re}\nolimits}
\renewcommand{\Im}{\mathop{\rm Im}\nolimits}

\DeclareMathOperator{\Op}{Op}

\DeclareMathOperator{\supp}{supp}

\DeclareMathOperator{\WF}{WF}

\definecolor{green}{rgb}{0,0.8,0}

\begin{document}

\title[]{Microlocal analysis and spectral gap \\ for complex hyperbolic manifolds}

\author{Travis Cunningham}
\email{travisdcunningham@gmail.com}

\begin{abstract}
We prove the existence of an essential spectral gap for convex cocompact complex hyperbolic manifolds under the hypothesis that the limit set does not contain a complex circle. We apply Quan's extension to complex hyperbolic manifolds of Vasy's approach to meromorphic continuation of the resolvent, which allows us to adapt the method of Dyatlov--Zahl to prove fine microlocal properties of resonant states. Following recent work of Athreya--Dyatlov--Miller and Dyatlov--J\'ez\'equel, we then take advantage of the different expansion/contraction rates in different directions for the geodesic flow, and apply the one-dimensional Fractal Uncertainty Principle of Bourgain--Dyatlov along the fast expanding/contracting direction. Combining these techniques provides a new resolvent bound for convex cocompact complex hyperbolic manifolds from which the essential spectral gap follows.
\end{abstract}

\maketitle

\section{Introduction}\label{sec:introduction}

Essential spectral gaps are important in mathematical physics due to their implications in the decay of waves on the underlying system. In this paper we study essential spectral gaps for convex cocompact complex hyperbolic manifolds, $(M=\Gamma\backslash \mathbb{CH}^n,g)$. Here $\mathbb{CH}^n$ is the complex hyperbolic space of complex dimension $n\geq 2$ and $\Gamma\subset SU(n,1)$ is a convex cocompact subgroup. See Section~\ref{sec:complex-hyperbolic-manifolds} for precise definitions, and see e.g. \cite{Ka}, \cite{Par}, and \cite{Go} for more on these objects which have fascinating spectral properties and can also be seen as simplified models of certain complexified black holes where a spectral gap would imply the decay of gravitational waves.

The resolvent
\[
R(\lambda):=\left(-\Delta_g-\frac{n^2}{4}-\lambda^2\right)^{-1},
\]
initially defined on $L^2(M)$ in the physical plane $\Im\lambda>n/2$, has meromorphic continuation to all of $\mathbb C$ as an operator from $L^2_c(M)$ to $H^2_{\rm loc}(M)$, with poles of finite rank, see \cite{EMM}, \cite{GuSa}, \cite{quan2021microlocal}. These poles are called resonances, which are the discrete spectral data replacing eigenvalues in systems that allow energy to escape to infinity. We refer to \cite{DyZw} for a thorough overview of the theory of resonances.

Under the hypothesis that the limit set $\Lambda_\Gamma$ does not contain a complex circle, we prove the existence of a resolvent bound and essential spectral gap; that is, we show the existence of $\beta>0$ such that the region $\{\Im\lambda>-\beta\}$ contains at most finitely many resonances. We define and analyze the limit set and complex circles in Section~\ref{sec:complex-hyperbolic-manifolds}. In particular, we characterize the occurrence of complex circles within the limit set in terms of the dynamics of the system, which is an essential component of the proof.

We now state the main result of this paper:

\begin{thm*}
Let $M=\Gamma\backslash \mathbb{CH}^n$ be a convex cocompact complex hyperbolic manifold such that the limit set $\Lambda_\Gamma$ does not contain a complex circle. Then there exist $C_0,\beta>0$ such that for each $\varepsilon>0$ and each $\chi\in C_c^\infty(M)$, we have
\begin{equation}
\|\chi R(\lambda)\chi\|_{L^2(M)\to L^2(M)}\leq C_{\chi,\varepsilon}|\lambda|^{-1-\frac43\min\{0,\Im\lambda\}+\varepsilon},\qquad |\lambda|>C_0,\ \Im\lambda>-\beta.\label{eq:1.1}
\end{equation}
In particular, $M$ has an essential spectral gap of size $\beta$.
\end{thm*}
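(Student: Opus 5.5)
The plan follows the strategy of Dyatlov--Zahl and Athreya--Dyatlov--Miller, adapted to the complex hyperbolic setting through Quan's version of Vasy's method.

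\textbf{Step 1: reduction to a semiclassical estimate.} Using Quan's extension of Vasy's construction, I would write $\chi R(\lambda)\chi$ in terms of the inverse of a Fredholm family $P(\lambda)$. This family acts on anisotropic or variable-order Sobolev spaces on a compactification of $M$. Setting $h=|\lambda|^{-1}$ and $\lambda=h^{-1}(1+ih\nu)$ with $\nu$ bounded, the bound \eqref{eq:1.1} reduces to an estimate of the form
\[
\|u\|\le Ch^{-1-\frac43\max\{0,\nu\}-\varepsilon}\|P_h(\nu)u\|
\]
for $u$ in the semiclassical Vasy space. Here $\nu=-\Im\lambda$ ranges over $[-\,\mathrm{const},\beta]$, and the case $\Im\lambda\ge 0$ is the standard nontrapping-type bound. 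I would argue by contradiction. Take a sequence $h_j\to0$ and quasimodes $u_j$ normalized by $\|\chi u_j\|=1$ with $\|P u_j\|=o(h^{1+\frac43\nu+\varepsilon})$. Propagation estimates (radial points at the conormal to the boundary, and elliptic estimates in the Vasy region) then show that $u_j$ is concentrated near the trapped set $K$. Here $K$ is the set of geodesics with both endpoints in $\Lambda_\Gamma$.

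\textbf{Step 2: microlocal structure of resonant states.} Following Dyatlov--Zahl, I would propagate $u_j$ backward along the geodesic flow for time about $\rho\log(1/h)$. The point is to show that $u_j$ is microlocally concentrated, up to an $h$-power loss, on an $h^\rho$-neighborhood of the outgoing tail $\Gamma_+$. In the lifted picture, $\Gamma_+$ is the set of covectors whose forward limit lies in $\Lambda_\Gamma\subset\partial\mathbb{CH}^n$. The complex hyperbolic flow has stable and unstable bundles that split into a fast part (rate $2$, the direction of the Reeb/complex line) and a slow part (rate $1$). So the propagation has to be done with anisotropic calculi. I would use the $\Psi_{h,L,\rho}$ calculus along the fast unstable foliation, as in Athreya--Dyatlov--Miller and Dyatlov--J\'ez\'equel. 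Restricted to the fast direction, the admissible localization scale is $h^{\rho}$ with $\rho$ up to $1$ over time $\frac12\log(1/h)$. This produces two operators, $A_+$ and $A_-$: $A_+$ localizes to the fast-unstable neighborhood of $\Gamma_+$, and $A_-$ is the analogous operator for $\Gamma_-$ obtained from the adjoint problem. Balancing the propagation times with the growth factor $e^{\nu t}$ is where the exponent $\frac43$ should come out, because the fast direction contracts at rate $2$ and the total unstable Jacobian has rate $n$ (it is $2\cdot1+1\cdot(2n-2)$, normalized). I expect to set this up as $\|u\|\le Ch^{-\frac43\nu}\|A_-A_+u\|+\dots$, using Egorov's theorem up to the Ehrenfest time.

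\textbf{Step 3: fractal uncertainty principle.} This is the main obstacle. I would need $\|A_-A_+\|\le Ch^{\gamma}$ for some $\gamma>0$, and I would obtain it from the one-dimensional FUP of Bourgain--Dyatlov. Restricted to each fast direction, which is a complex-circle/Reeb-chain direction through boundary points, the projections of $\Lambda_\Gamma$ must be porous at scales from $h$ to $1$. This is exactly where the hypothesis enters. I would show, via the dynamical characterization of complex circles in $\Lambda_\Gamma$ from Section~\ref{sec:complex-hyperbolic-manifolds}, that if $\Lambda_\Gamma$ contains no complex circle, then its intersections with the chains are uniformly porous. The argument is by compactness and convex cocompactness, using $\Gamma$-invariance and rescaling: failure of porosity at small scales would produce, in the limit, a full chain inside $\Lambda_\Gamma$. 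Then I would foliate phase space by fast leaves, freeze the slow variables, and apply the one-dimensional FUP uniformly in the parameters. The technical difficulty is making the anisotropic symbols regular enough, specifically $h^\rho$ only in the fast direction and $h^{1/2}$ in the slow one, so that the operators reduce to Fourier-localization in one variable.

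\textbf{Step 4: conclusion.} Combining Steps 2 and 3 gives $1\le Ch^{\gamma-\frac43\nu-\varepsilon}\cdot o(1)+o(1)$, which is a contradiction for $\nu<\beta:=\frac34\gamma$. This yields \eqref{eq:1.1}. Since poles of $R(\lambda)$ would violate the bound, only finitely many resonances lie in $\{\Im\lambda>-\beta,\ |\lambda|\le C_0\}$, by discreteness of resonances. This gives the essential spectral gap.
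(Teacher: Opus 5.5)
Your overall architecture matches the paper's. It uses Quan's Vasy-type operator $\mathcal P_h(\omega)$ and propagation identities in the anisotropic calculus that localize resonant states near $\Gamma_\pm$. Porosity in the fast direction is deduced from the absence of complex circles by compactness plus the $e^{2t}$ rescaling of $V^\pm$-segments, which is what Lemmas~\ref{obj:2.4}, \ref{obj:2.5} and \ref{obj:4.5} do, and the one-dimensional FUP is then applied as in Athreya--Dyatlov--Miller.

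The genuine gap is the quantitative core, which is what produces the exponent $\frac43$. You assert the $\frac43$ but do not derive it, and your heuristic for it is wrong. The unstable Jacobian rate $n$ plays no role; the exponent is dimension independent.

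Your proposed regime is also inconsistent and unavailable:
\begin{itemize}
\item Propagating for time $\rho\log\frac1h$ at fast rate $2$ produces $h^{-2\rho}$ derivatives.
\item Fast regularity $h^\rho$ with $\rho\approx1$, together with $h^{1/2}$ in the slow directions, needs cells of size $h^{1/2}$ in all directions. That means $\rho'=\frac12$ along $L$, violating $\rho+\rho'<1$ in the Dyatlov--Jin calculus.
\item You cannot globally ``foliate by fast leaves and freeze the slow variables''. $E^\pm$ is not integrable, since $[W_j^+,Z_j^+]$ is a nonzero multiple of $V^+$. So the straightening of Lemma~\ref{obj:2.1}, in which $A_+$ localizes in $y_1$ and $A_-$ in $\eta_1$, is exact only at a single point.
\end{itemize}

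The missing idea is the $h^{\rho/2}$-localization. Cover $\supp\chi_-$ by $h^{\rho/2}$-balls around points $q_\ell$ and straighten at each $q_\ell$.
\begin{itemize}
\item By Lemma~\ref{obj:2.2} with $\alpha\sim h^{\rho/2}$, the rectangles of width $\alpha^2$ in $\eta_1$ keep diameter $\lesssim\alpha e^t\le1$ up to time $t=kn_0\approx\frac\rho2\log\frac1h$, because the slow directions expand only at rate $1$.
\item Over that time the fast rate $2$ gives porosity of $\Omega_\pm$ on scales $h^\rho$ to $1$ (Lemmas~\ref{obj:4.4} and~\ref{obj:4.5}).
\item The cutoffs $\psi_\ell$ put $a_\pm\psi_\ell$ in $S^{\rm comp}_{L,\rho+\varepsilon,\rho/2}$. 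Then $\rho+\rho'<1$ forces $\rho<\frac23$, so one takes $\rho=\frac23(1-\varepsilon_0)$.
\item The exponent comes from Lemmas~\ref{obj:3.12} and~\ref{obj:3.13}: $\|Z_+\|\le Ch^{-1-\rho\nu_0}$ and $\|J_-\|\le Ch^{-\rho\nu_0-\varepsilon}$. Together these give $h^{-1-2\rho\nu_0-\varepsilon}$ with $2\rho<\frac43$.
\end{itemize}
With your propagation time $\frac12\log\frac1h$, the same accounting would give the real hyperbolic exponent $2$, not $\frac43$.

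Minor points:
\begin{itemize}
\item Only $J_-$ multiplies the FUP term, so the gap condition is $\rho\nu_0<\beta_0$, not $\frac43\nu<\gamma$.
\item Porosity is available only down to scale $h^\rho$, not $h$.
\item For $0\le\Im\lambda$ near the real axis the bound is not a nontrapping bound. In the paper it comes out of the same argument.
\end{itemize}
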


Essential spectral gaps have a long, rich history, mostly in the setting of (real) hyperbolic surfaces; see \cite{Pat}, \cite{Su}, \cite{Na}, \cite{BoDy17}, \cite{BoDy18}, \cite{DyZa}, \cite{So}, and references therein. More recently, Tao \cite{Ta} gave a spectral gap for the more general case of asymptotically hyperbolic surfaces. This is done using the result of \cite{Vac}, which proves a spectral gap for scattering by several strictly convex obstacles in the Euclidean plane. Most of these results use a powerful tool called the Fractal Uncertainty Principle -- see \cite{Dy19b} for an introduction -- which we also use to prove our Theorem.

Spectral gaps for higher dimensional spaces are much more rare, but in \cite[Theorem A.1]{Ta}, Tao proves a result related to our Theorem but for real hyperbolic manifolds when the limit set does not contain a round (real) circle. This result is proved using the method of Dyatlov--Zahl \cite{DyZa} to reduce the problem to a Fractal Uncertainty Principle (FUP), and then applying the breakthrough result of Cohen \cite{Co} giving an FUP in higher dimensions (see also \cite{JZZ} for a quantitative version of \cite{Co}). Despite similarities in the statements of \cite[Theorem A.1]{Ta} and our Theorem, we remark that both the results and proofs are quite different. On the one hand, complex circles do not arise in the real hyperbolic case, whereas real circles may arise in our setting but pose no problem in the proof of a spectral gap.

Our approach also reduces the proof to a Fractal Uncertainty Principle. However, since \cite{DyZa} does not readily apply to the complex hyperbolic setting, a significant portion of our work in this paper is to adapt the microlocal framework of \cite{DyZa}, as refined by \cite{Dy19a}, to our setting. This requires the use of results of Quan \cite{quan2021microlocal}, which provides an alternative approach to meromorphic continuation of $R(\lambda)$ analogous to the work of Vasy \cite{Vas13a}, \cite{Vas13b} in the real hyperbolic setting (see also \cite{Zw16} and \cite[Chapter 5]{DyZw} for simplified treatments of Vasy's method). These papers allow powerful microlocal analytical methods to be applied to a certain modified version of the scattering resolvent. In the same way that \cite{DyZa}, \cite{Dy19a} use \cite{Vas13a}, \cite{Vas13b} to prove fine microlocal estimates on resonant states in the real hyperbolic setting, we use \cite{quan2021microlocal} to adapt \cite{DyZa}, \cite{Dy19a} to prove similar microlocal estimates in the present setting.

Moreover, for complex hyperbolic quotients, the geodesic flow expands/contracts at different rates along different directions in the unstable/stable decomposition of the tangent spaces, see \eqref{eq:2.9}, \eqref{eq:2.10}. This precludes the use of the FUP of \cite{Co} since the line and ball porosity hypothesis is difficult to verify under such conditions; however, using the approach of Athreya--Dyatlov--Miller \cite{ADM}, which in turn uses ideas of Dyatlov--J\'ez\'equel \cite{DyJe}, we remedy this issue by exploiting the different expansion/contraction rates and applying the one-dimensional FUP of Bourgain--Dyatlov \cite{BoDy18} along the fast unstable/stable directions. This application of the approach of \cite{ADM} requires the use of an anisotropic pseudodifferential calculus developed in \cite[Appendix]{DyJi}, which generalizes the one originating in \cite{DyZa}. In Section~\ref{subsec:anisotropic-calculus} we prove several new results for this calculus that have analogues in \cite{DyZa}, and at the same time that we adapt the methods of \cite{DyZa}, \cite{Dy19a} to the complex hyperbolic setting using \cite{quan2021microlocal}, we also adapt them to this generalized calculus of \cite{DyJi}. We note that the slight improvement in our resolvent bound as compared with the version for real hyperbolic manifolds given in \cite{Ta}, namely our factor of $4/3$ on $-\min\{0,\Im\lambda\}$ rather than $2$, is due to this specific way we adapt \cite{DyZa}, \cite{Dy19a} with the application of the approach of \cite{ADM} in mind.

We now discuss known examples of convex cocompact complex hyperbolic manifolds whose limit sets contain complex circles. Recent work of \cite{LeOh} shows that any such complex circle must be contained in a totally geodesic, properly immersed submanifold in the convex core of $M$, and that there may be at most finitely many such submanifolds in the convex core. Together, these results place restrictions on the existence and prevalence of complex circles in the limit set. Explicit examples include the so-called complex Fuchsian groups and their higher dimensional analogues, see e.g. \cite[Example 8.8]{Ka}, whose limit sets are exactly a complex circle or a sphere as the boundary of a complex $k$-plane. Non-Fuchsian examples include \cite[Theorem D]{BaKi}. Note that finitely many immersed, totally geodesic submanifolds in the convex core does not imply finitely many complex circles in the limit set. The complex Fuchsian groups can be shown to satisfy a spectral gap via the method of separation of variables analogously to \cite[Section 3.3]{JZZ}, but it is an open question whether an essential spectral gap exists for the more complicated examples above.

Finally, we mention several other recent applications of the Fractal Uncertainty Principle. For support properties of semiclassical defect measures, see \cite{DyJi}, \cite{DJN}, \cite{DyJe}, \cite{KiMi}, \cite{ADM}. For Strichartz estimates on asymptotically hyperbolic surfaces, see \cite{HSTZ}. And for improved fractal Weyl bounds matching improved spectral gaps given by the FUP, see \cite{Cu}.

\textbf{Acknowledgement.} We wish to thank Zhongkai Tao for several fruitful discussions, and for introducing us to many useful papers relevant to this one. We also thank him for allowing us to use some of his notes near the end of Section~\ref{sec:complex-hyperbolic-manifolds}, for typing the original handwritten version of this manuscript, and for comments that improved the exposition. We also thank Jialun Li for helping us understand complex circles.

\section{Complex hyperbolic manifolds}\label{sec:complex-hyperbolic-manifolds}

\subsection{Complex hyperbolic space and convex cocompact complex hyperbolic manifolds.}\label{subsec:complex-hyperbolic-space}
We begin by describing two important models of the complex hyperbolic space $\mathbb{CH}^n$ of complex dimension $n\geq2$. We follow portions of \cite[Section 2]{ADM}, \cite{Ka}, \cite{Par}, and refer to those papers along with \cite{Go}, \cite{EMM} for more details.

The \emph{hyperboloid} model is defined as follows: let $\mathbb C^{n,1}:=\mathbb C^{n+1}$ be complex Minkowski space with the Hermitian form
\begin{equation}
\langle z,w\rangle:=-z_0\overline{w}_0+\sum_{j=1}^n z_j\overline{w}_j.\label{eq:2.1}
\end{equation}
where an element $z\in\mathbb C^{n,1}$ is written $z=(z_0,z')$ with $z_0\in\mathbb C$, $z'\in\mathbb C^n$. Motivated by special relativity, the following subsets are called the sets of time-like, light-like, and space-like vectors, respectively:
\begin{equation}
\begin{aligned}
A_-&=\{z\in\mathbb C^{n,1}:\langle z,z\rangle<0\},\\
A_0&=\{z\in\mathbb C^{n,1}\setminus\{0\}:\langle z,z\rangle=0\},\\
A_+&=\{z\in\mathbb C^{n,1}:\langle z,z\rangle>0\}.
\end{aligned}\label{eq:2.2}
\end{equation}
The projection map $\mathbb P:\mathbb C^{n,1}\setminus \{z_0=0\}\to\mathbb C^n$ is defined by
\[
\mathbb Pz=\frac{z'}{z_0}.
\]
Then the complex hyperbolic space $\mathbb{CH}^n$ can be identified with $\mathbb P A_-$, which is the collection of all negative lines in $\mathbb C^{n,1}$. Notice that, in this model, the boundary $\partial\mathbb{CH}^n$ is identified with $\mathbb P A_0$. If we write $\mathbb{C}\mathbb{S}^{n,1}=\{z\in\mathbb C^{n,1}:\langle z,z\rangle=-1\}$, then this is equivalent to writing
\begin{equation}
\mathbb{CH}^n=\mathbb{C}\mathbb{S}^{n,1}/U(1),\label{eq:2.3}
\end{equation}
where $U(1)=\{e^{i\theta}:\theta\in\mathbb R\}$ acts by isometry on $\mathbb{C}\mathbb{S}^{n,1}$ by $e^{i\theta}\cdot z=e^{i\theta}z$. The complex hyperbolic metric is the Riemannian metric induced by the Lorentzian metric $\Re\,\langle\cdot,\cdot\rangle$ on $\mathbb{C}\mathbb{S}^{n,1}$.

In the \emph{unit ball} model we identify $\mathbb{CH}^n$ with the complex ball $\mathbb B^n:=\{z\in\mathbb C^n:|z|<1\}$. Notice that this can be obtained by considering the section of $\mathbb C^{n,1}$ on which $z_0=1$, and then taking those vectors $z=(1,z')$ for which $\langle z,z\rangle<0$. In other words, $\mathbb B^n$ is identified with $A_-\cap\{z_0=1\}$. The metric is given by $-4\partial\bar\partial\log(\rho)$ where $\rho:=1-|z|^2$. In this model the boundary $\partial\mathbb{CH}^n$ is equivalent to the real sphere $\mathbb{S}^{2n-1}$ given by $\{z\in\mathbb C^n:|z|=1\}$.

In either model, the holomorphic sectional curvature is $-1$ and $\mathbb{CH}^n$ is a K\"ahler manifold. Each model has its advantages and disadvantages. We will mostly use the hyperboloid model, but the unit ball model is useful for describing the limit set and complex circles since it treats the boundary uniformly.

Let $SU(n,1)$ denote the Lie group of complex linear automorphisms of $\mathbb C^{n,1}$ with determinant 1 and preserving the Hermitian form \eqref{eq:2.1}. The description of both the hyperboloid and unit ball models in terms of $A_-\subset\mathbb C^{n,1}$ provides a natural description of the action of $SU(n,1)$ on $\mathbb{CH}^n$. Just as is the case for the automorphism group in the real hyperbolic setting, any element of $SU(n,1)$ can be classified as either elliptic, parabolic, or hyperbolic, see e.g., \cite[Section A.2]{EMM}. Given a discrete subgroup $\Gamma\subset SU(n,1)$, we define the limit set $\Lambda_\Gamma\subset\partial\mathbb{CH}^n$ to be the set of accumulation points of any orbit $\Gamma z$, $z\in\mathbb{CH}^n$; the limit set is independent of the choice of $z$. A complex hyperbolic manifold is the quotient $M=\Gamma\backslash\mathbb{CH}^n$ of $\mathbb{CH}^n$ by a discrete subgroup $\Gamma$. If the convex core $\Gamma\backslash\operatorname{convex-hull}(\Lambda_\Gamma)$ is a compact set, then the subgroup $\Gamma$ is called convex cocompact, and the manifold $M=\Gamma\backslash\mathbb{CH}^n$ is called a convex cocompact complex hyperbolic manifold.

We now discuss complex circles, also called $\mathbb C$-circles, in the unit ball model of $\mathbb{CH}^n$. A complex geodesic in $\mathbb{CH}^n$ is the projection of an indefinite complex 2-plane in $\mathbb C^{n,1}$. The boundary of a complex geodesic is called a \emph{complex circle}, see \cite[Section 2]{Ka}, \cite[Sections 5.2 and 5.5]{Par}.

More explicitly, let $z,\zeta\in\mathbb C^{n,1}$ be linearly independent and set
\begin{equation}
B_{z,\zeta}:=\{\eta=\alpha z+\beta\zeta:\alpha,\beta\in\mathbb C\}.\label{eq:2.4}
\end{equation}
The intersection of this indefinite complex 2-plane with $\{\eta_0=1\}$ is a complex line in $\mathbb C^n$, and provided it intersects $\mathbb B^n$, it projects to a complex geodesic in $\mathbb{CH}^n$. Thus, its intersection with the boundary
\[
B_{z,\zeta}\cap\{\eta_0=1\}\cap \mathbb{S}^{2n-1}
\]
is a complex circle. In the next subsection we will connect this discussion of complex circles to the dynamics of the system.

\subsection{Dynamics and the fast/slow, stable/unstable decomposition.}\label{subsec:dynamics}
We now describe the basic properties of the geodesic flow for convex cocompact complex hyperbolic manifolds. In this subsection, we primarily work in the hyperboloid model of $\mathbb{CH}^n$ and will follow portions of \cite[Section 2]{ADM}.

By \eqref{eq:2.3}, the unit tangent bundle $S\mathbb{CH}^n$ can be identified with $S\mathbb{C}\mathbb{S}^{n,1}/U(1)$ where
\begin{equation}
S\mathbb{C}\mathbb{S}^{n,1}=\{(z,\zeta)\in\mathbb C^{n,1}\times\mathbb C^{n,1}:\langle z,z\rangle=-1,\ \langle z,\zeta\rangle=0,\ \langle\zeta,\zeta\rangle=1\}.\label{eq:2.5}
\end{equation}
Using the complex hyperbolic metric, we may identify the cosphere bundle $S^*\mathbb{CH}^n$ with $S\mathbb{CH}^n$, and to ease notation we will often write points of either space as $(z,\zeta)$ with the understanding that all subsequent operations are equivariant under the action of $U(1)$. The tangent and cotangent spaces $T\mathbb{CH}^n\setminus0$ and $T^*\mathbb{CH}^n\setminus0$ are naturally defined through positive homogeneous extension in the second variable.

One additional space we will need is the coframe bundle $F^*\mathbb{CH}^n$. Elements of $F^*\mathbb{CH}^n$ may be written as $(z,\zeta^1,\ldots,\zeta^n)$ with $z\in\mathbb{CH}^n$ and the $\zeta^1,\ldots,\zeta^n\in T_z^*\mathbb{CH}^n$ forming a positively oriented orthonormal basis. Note that we have the natural submersion
\begin{equation}
\pi_S:F^*\mathbb{CH}^n\to S^*\mathbb{CH}^n,\qquad (z,\zeta^1,\ldots,\zeta^n)\mapsto(z,\zeta^1).\label{eq:2.6}
\end{equation}

For $j,k\in\{0,\ldots,n\}$, we let $E_{j,k}$ be the matrix with $(E_{j,k})_{j,k}=1$ and all other entries 0. We define
\[
X=E_{01}+E_{10},\qquad V^\pm:=i(E_{00}\mp E_{01}\pm E_{10}-E_{11}),
\]
\[
W_j^\pm:=E_{0j}\pm E_{1j}+E_{j0}\mp E_{j1},\qquad Z_j^\pm:=i(E_{0j}\pm E_{1j}-E_{j0}\pm E_{j1}).
\]
Then
\begin{equation}
[X,V^\pm]=\pm2V^\pm,\qquad [X,W_j^\pm]=\pm W_j^\pm,\qquad [X,Z_j^\pm]=\pm Z_j^\pm.\label{eq:2.7}
\end{equation}
The flows $\varphi^t:=e^{tX}: S\mathbb{CH}^n\to S\mathbb{CH}^n$ and $e^{sV^\pm}: S\mathbb{CH}^n\to S\mathbb{CH}^n$ are the geodesic flow and the fast horocyclic flows, respectively. Both have natural extensions to $F^*\mathbb{CH}^n$, $S^*\mathbb{CH}^n$, and hence to $T\mathbb{CH}^n\setminus0$ and $T^*\mathbb{CH}^n\setminus0$ by homogeneity. A calculation shows that $\varphi^t$ acts on $F^*\mathbb{CH}^n$ by (note the matrices act on the right)
\[
\varphi^t(z,\zeta^1,\ldots,\zeta^n)=(z\cosh t+\zeta^1\sinh t,z\sinh t+\zeta^1\cosh t,\zeta^2,\ldots,\zeta^n).
\]
The fast horocyclic flows act on $F^*\mathbb{CH}^n$ by
\begin{equation}
e^{sV^\pm}(z,\zeta^1,\ldots,\zeta^n)=\big(z + is(z\pm\zeta^1),\zeta^1+is(\mp z-\zeta^1),\zeta^2,\ldots,\zeta^n\big).\label{eq:2.8}
\end{equation}

We have the following $\varphi^t$-invariant decomposition of the tangent bundle to $S\mathbb{CH}^n$:
\begin{equation}
T(S\mathbb{CH}^n)=\mathbb RX\oplus E_u\oplus E_s,\qquad E_u=\mathbb RV^-\oplus E^-,\qquad E_s=\mathbb RV^+\oplus E^+,\label{eq:2.9}
\end{equation}
where $\mathbb RV^-$ and $\mathbb RV^+$ are the fast unstable and fast stable subbundles, respectively, and $E^-$ and $E^+$ are the slow unstable and slow stable subbundles, respectively. We refer to \cite[Section 2.2.1]{ADM} for a further breakdown of the slow subbundles $E^\pm$ in terms of the vector fields $W_j^\pm,Z_j^\pm$ above. By \eqref{eq:2.7}, $\varphi^t$ preserves the decomposition \eqref{eq:2.9} and, following \cite[Section 2.2.1]{ADM}, we may fix a Riemannian metric on $S\mathbb{CH}^n$ such that the following expansion/contraction property holds for any point $q\in S\mathbb{CH}^n$:
\begin{equation}
|d\varphi^t(q)w|=
\begin{cases}
e^{\mp2t}|w|,&w\in\mathbb RV^\pm(q),\\
e^{\mp t}|w|,&w\in E^\pm(q).
\end{cases}\label{eq:2.10}
\end{equation}
Thus, \eqref{eq:2.10} says that the flow expands/contracts on $\mathbb RV^\pm$ twice as fast as on $E^\pm$.

Extending the spaces $E_u,E_s,E^\pm$, and vector fields $X,V^\pm$ from $S\mathbb{CH}^n\simeq S^*\mathbb{CH}^n$ to $T^*\mathbb{CH}^n\setminus0$ by making them positively homogeneous -- that is, equivariant under the dilation $(z,\zeta)\mapsto(z,\tau\zeta)$ for $\tau>0$ -- we extend the decomposition in \eqref{eq:2.9} to the cotangent bundle. Indeed, letting $\zeta\cdot\partial_\zeta$ be the vector field on $T^*\mathbb{CH}^n$ which is the generator of dilations on the fibers, we have
\begin{equation}
T(T^*\mathbb{CH}^n\setminus0)=\mathbb R(\zeta\cdot\partial_\zeta)\oplus\mathbb RX\oplus E_u\oplus E_s.\label{eq:2.11}
\end{equation}

Next we record two lemmas from \cite{ADM} for future use. The first provides the existence of symplectic coordinates straightening out the decomposition \eqref{eq:2.11} at a point.

\begin{lem}[{\cite[Lemma 2.4]{ADM}}]\label{obj:2.1}
Fix $q^0\in T^*\mathbb{CH}^n\setminus0$. Then there exists a neighborhood $U_0$ of $q^0$ in $T^*\mathbb{CH}^n$ and a symplectomorphism onto its image $\varkappa_0:U_0\to T^*\mathbb R^{2n}$, such that, denoting by $(y_1,\ldots,y_{2n})$ the coordinates on $\mathbb R^{2n}$ and by $(\eta_1,\ldots,\eta_{2n})$ the corresponding coordinates on the fibers of $T^*\mathbb R^{2n}$, we have
\begin{equation}
\begin{gathered}
\varkappa_0(q^0)=0,\\
d\varkappa_0(q^0)(V^+(q^0))\in\mathbb R\partial_{y_1},\\
d\varkappa_0(q^0)(V^-(q^0))\in\mathbb R\partial_{\eta_1},\\
d\varkappa_0(q^0)(E^+(q^0))=\operatorname{span}(\partial_{y_2},\ldots,\partial_{y_{2n-1}}),\\
d\varkappa_0(q^0)(E^-(q^0))=\operatorname{span}(\partial_{\eta_2},\ldots,\partial_{\eta_{2n-1}}),\\
d\varkappa_0(q^0)(X(q^0))\in\mathbb R\partial_{y_{2n}},\\
d\varkappa_0(q^0)(\zeta\cdot\partial_\zeta(q^0))\in\mathbb R\partial_{\eta_{2n}}.
\end{gathered}\label{eq:2.12}
\end{equation}
\end{lem}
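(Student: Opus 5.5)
The plan is a linear-algebra/symplectic-normal-form argument. First we fix a convenient linear symplectic basis of $T_{q^0}(T^*\mathbb{CH}^n)$ adapted to \eqref{eq:2.11}. Then we realize it by a local symplectomorphism, using Darboux's theorem together with the fact that linear symplectic maps are induced by symplectomorphisms.

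\textbf{Step 1: Lagrangian structure of the decomposition.} Let $\omega$ be the canonical symplectic form. The key facts are the following.
\begin{itemize}
\item[(a)] $E_s\oplus\mathbb R X$ and $E_u\oplus\mathbb R(\zeta\cdot\partial_\zeta)$ are both Lagrangian at $q^0$.
\item[(b)] $\omega$ pairs $\mathbb RV^+$ nondegenerately with $\mathbb RV^-$, pairs $E^+$ nondegenerately with $E^-$, and pairs $X$ with $\zeta\cdot\partial_\zeta$, with all cross pairings vanishing.
\end{itemize}

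Two ingredients give this. First, $\omega(X,\zeta\cdot\partial_\zeta)\neq 0$, since $X$ is the Hamiltonian field of $|\zeta|_g$ and $\iota_{\zeta\cdot\partial_\zeta}\omega$ is the canonical 1-form. Second, $E_u$ and $E_s$ lie in the kernel of both $d|\zeta|_g$ and the canonical 1-form; this holds because these bundles are tangent to the level sets of $|\zeta|_g$ and are defined by homogeneous extension.

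For the remaining pairings we use invariance under $\varphi^t$ and the rates \eqref{eq:2.10}. The flow $\varphi^t$ preserves $\omega$ and is homogeneous of degree $0$. For $v\in E_\bullet^{a}$ and $w\in E_\bullet^{b}$, with exponential rates $e^{\alpha t}$ and $e^{\beta t}$, we get
\[
\omega(v,w)=\omega(d\varphi^tv,d\varphi^tw)=O(e^{(\alpha+\beta)t})
\]
as $t\to\pm\infty$, by compactness of the sphere bundle and homogeneity. Hence $\omega(v,w)=0$ unless $\alpha+\beta=0$. This kills pairings within $E_s$, within $E_u$, between $V^\pm$ and $E^\mp$ (rates $\mp2$ and $\pm1$), and between any of these and $X$ or $\zeta\cdot\partial_\zeta$.

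Nondegeneracy of $\omega$ then forces the surviving pairings, $V^+$ with $V^-$ and $E^+$ with $E^-$, to be nondegenerate.

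\textbf{Step 2: Adapted symplectic basis.} We build a symplectic basis $e_1,\dots,e_{2n},f_1,\dots,f_{2n}$ of $T_{q^0}$ with $\omega(f_j,e_k)=\delta_{jk}$. Take
\begin{itemize}
\item $e_1\in\mathbb RV^+$ and $f_1\in\mathbb RV^-$, scaled so that $\omega(f_1,e_1)=1$;
\item $e_2,\dots,e_{2n-1}$ a basis of $E^+$, with $f_2,\dots,f_{2n-1}$ the dual basis in $E^-$ under the nondegenerate pairing from Step 1;
\item $e_{2n}\in\mathbb RX$ and $f_{2n}\in\mathbb R\zeta\cdot\partial_\zeta$, normalized in the same way.
\end{itemize}
By Step 1 this is a symplectic basis. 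Let $L$ be the linear symplectic map sending $e_j\mapsto\partial_{y_j}$ and $f_j\mapsto\partial_{\eta_j}$. Dimension counting ($\dim E^\pm=2n-2$) matches the ranges in \eqref{eq:2.12}.

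\textbf{Step 3: Realizing $L$ by a symplectomorphism.} By Darboux, there are canonical coordinates $\varkappa_1$ near $q^0$ with $\varkappa_1(q^0)=0$. The composite $L\circ d\varkappa_1(q^0)^{-1}$ is a linear symplectic map of $T^*\mathbb R^{2n}$. A linear symplectic map is itself a symplectomorphism of $T^*\mathbb R^{2n}$, so setting $\varkappa_0=(L\circ d\varkappa_1(q^0)^{-1})\circ\varkappa_1$ gives \eqref{eq:2.12}.

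\textbf{Main obstacle.} The only nontrivial step is Step 1, specifically the vanishing of the cross pairings. It relies on the rates in \eqref{eq:2.10} holding uniformly, which comes from $SU(n,1)$-equivariance, so the bounds are uniform over $\mathbb{CH}^n$ rather than needing compactness. It also relies on homogeneity, to handle the $\zeta\cdot\partial_\zeta$ and $X$ directions. An alternative is to verify the pairings directly from the Lie algebra relations \eqref{eq:2.7}.
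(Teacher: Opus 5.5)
Your argument is correct and is essentially the standard proof behind \cite[Lemma 2.4]{ADM}, which the paper simply cites: you get the pairing structure of $\omega$ on the decomposition \eqref{eq:2.11} from flow-invariance, the rates \eqref{eq:2.10}, and $\omega(X,\zeta\cdot\partial_\zeta)=\pm|\zeta|_g\neq0$, then choose an adapted symplectic basis and compose canonical coordinates with a linear symplectic map. The one slip is the appeal in Step 1 to ``compactness of the sphere bundle,'' which is false for $S^*\mathbb{CH}^n$. The uniform bound $|\omega(v,w)|\leq C|v|\,|w|$ along the orbit of $q^0$ comes instead from the $SU(n,1)$-invariance of $\omega$ and of the metric in \eqref{eq:2.10}, together with transitivity of $SU(n,1)$ on $S^*\mathbb{CH}^n$, as you yourself indicate at the end.
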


As noted in \cite[Section 2.2.4]{ADM}, if we let $V_\perp^\pm$ denote the complement of $\mathbb RV^\pm$, then Lemma~\ref{obj:2.1} implies that
\begin{equation}
d\varkappa_0(q^0)V_\perp^+(q^0)=\ker dy_1,\label{eq:2.13}
\end{equation}
\begin{equation}
d\varkappa_0(q^0)V_\perp^-(q^0)=\ker d\eta_1.\label{eq:2.14}
\end{equation}

Now let $M=\Gamma\backslash\mathbb{CH}^n$ be a convex cocompact complex hyperbolic manifold (see Section~\ref{subsec:complex-hyperbolic-space}). Then the flows $\varphi^t,e^{sV^\pm}$ as well as the decompositions \eqref{eq:2.9},\eqref{eq:2.11} extend to $M$ via the covering map
\[
\pi_\Gamma:T^*\mathbb{CH}^n\to T^*M.
\]
The following lemma is stated in \cite[Lemma 2.5]{ADM} for a compact complex hyperbolic manifold, and we are extending it to the convex cocompact case. Note that it is through this lemma, which describes the propagation of rectangles in the direction of $V^\pm$ and $V_\perp^\pm$, and its use in Section~\ref{sec:reduction-fup}, that the different expansion rates for the flow is utilized; it is the crucial step allowing us to prove porosity for the supports of the symbols constructed in Section~\ref{sec:reduction-fup} and apply the Fractal Uncertainty Principle. We remark that the only difference between the statement in \cite[Lemma 2.5]{ADM} and our Lemma~\ref{obj:2.2} is the assumption that $\alpha e^t<1$ prior to listing the conclusion. This is necessary to modify their proof to the non-compact case.

\begin{lem}[{\cite[Lemma 2.5]{ADM}}]\label{obj:2.2}
Assume that $q^0\in T^*M\setminus0$, $U_0$ is an open set containing $q^0$, and $\varkappa_0:U_0\to T^*\mathbb R^{2n}$ is a diffeomorphism onto its image satisfying the properties \eqref{eq:2.12} and \eqref{eq:2.13},\eqref{eq:2.14}. Take small $\alpha>0$ and two numbers $y_1^0,\eta_1^0\in[-\alpha,\alpha]$, and define the slow unstable/stable rectangles (which are subsets of $T^*M\setminus0$)
\[
R^-_{q^0,\eta_1^0,\alpha}:=\varkappa_0^{-1}\big(\{(y,\eta):|y|+|\eta|\leq\alpha,\ |\eta_1-\eta_1^0|\leq\alpha^2\}\big),
\]
\[
R^+_{q^0,y_1^0,\alpha}:=\varkappa_0^{-1}\big(\{(y,\eta):|y|+|\eta|\leq\alpha,\ |y_1-y_1^0|\leq\alpha^2\}\big).
\]
Then there exists a constant $C$ independent of $\alpha,y_1^0,\eta_1^0$ such that, denoting by $\operatorname{diam}$ the diameter of a subset of $T^*M$, we have for all $t\geq0$ with $\alpha e^t<1$,
\[
\operatorname{diam}\varphi^t(R^-_{q^0,\eta_1^0,\alpha})\leq C\alpha e^t,
\]
\[
\operatorname{diam}\varphi^{-t}(R^+_{q^0,y_1^0,\alpha})\leq C\alpha e^t.
\]
\end{lem}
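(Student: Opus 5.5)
The plan is to lift the problem to the universal cover and reduce it to the compact-case argument of \cite[Lemma 2.5]{ADM}. Everything there is a homogeneous-space computation on $T^*\mathbb{CH}^n\setminus0$, and the condition $\alpha e^t<1$ keeps all relevant sets inside a region where the covering map $\pi_\Gamma$ behaves like a local isometry with uniform constants.

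First, shrinking $U_0$ if necessary, I assume $U_0$ is contained in a single sheet of $\pi_\Gamma$. Then $\varkappa_0$ lifts to a chart $\tilde\varkappa_0=\varkappa_0\circ\pi_\Gamma$ near a lift $\tilde q^0$, and the rectangles $R^\pm$ lift to rectangles $\tilde R^\pm\subset T^*\mathbb{CH}^n$. Since $\pi_\Gamma$ commutes with $\varphi^t$ and does not increase distances (using the lifted metric), it suffices to bound $\operatorname{diam}\varphi^t(\tilde R^-)$ in $T^*\mathbb{CH}^n$. By positive homogeneity we may also assume everything lies in a fixed compact shell $\{1/2\le|\zeta|\le2\}$, since $\alpha$ is small and $q^0$ is fixed.

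Next, parametrize $\tilde R^-$ by flowing out from the point $q_1:=\tilde\varkappa_0^{-1}(0,\ldots,\eta_1^0,\ldots)$. Using \eqref{eq:2.12}, \eqref{eq:2.14} and the smoothness of $\tilde\varkappa_0$, every point $q\in\tilde R^-$ can be written as
\[
q=e^{sX}\,e^{r\zeta\cdot\partial_\zeta}\,e^{w_s}\,e^{w_u}\,e^{\sigma V^-}e^{\tau V^+}q_1 ,
\]
where $w_s\in E^+$ and $w_u\in E^-$ are horocyclic elements of the group $SU(n,1)$, suitably ordered. The parameters satisfy $|s|,|r|,|w_s|,|w_u|,|\tau|\le C\alpha$, while $|\sigma|\le C\alpha^2$; the latter holds because the $\eta_1$-width is $\alpha^2$ and $d\eta_1$ vanishes to first order on $V_\perp^-$ by \eqref{eq:2.14}. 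The point of this step is that the directions $E^\pm$, $X$ and $\zeta\cdot\partial_\zeta$ are first-order transverse to $\partial_{\eta_1}$. The main obstacle is controlling the second-order terms, which could leak an $O(\alpha^2)$ component into the $V^-$ direction. That leakage is acceptable, because such a component is expanded by $e^{2t}$ to $\alpha^2e^{2t}\le\alpha e^t$ precisely when $\alpha e^t<1$.

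Then apply $\varphi^t=e^{tX}$ and use the commutation relations \eqref{eq:2.7} in the group. We have $e^{tX}e^{\sigma V^-}e^{-tX}=e^{\sigma e^{2t}V^-}$ and $e^{tX}e^{w_u}e^{-tX}=e^{e^{t}w_u}$, while the stable components contract. Hence $\varphi^t(q)$ equals a bounded-parameter group element applied to $\varphi^t(q_1)$, with parameters of size $O(\alpha+\alpha e^t+\alpha^2e^{2t})=O(\alpha e^t)$. Since $\alpha e^t<1$, these parameters stay in a fixed compact neighborhood of the identity. The group action is left-invariant under the isometries $SU(n,1)$, so the induced displacement is bounded by $C\alpha e^t$ uniformly in the base point $\varphi^t(q_1)$. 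This uniformity is where noncompactness is handled: it uses homogeneity of $\mathbb{CH}^n$ rather than compactness of $M$, and the hypothesis $\alpha e^t<1$ ensures we never need large group elements, for which distance is not linear in the parameter. The same argument with $t\mapsto-t$ and $V^+$ in place of $V^-$ gives the bound for $R^+$.
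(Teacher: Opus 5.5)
Your argument is correct, but it is organized differently from the proof the paper imports from \cite[Lemma 2.5]{ADM}. That proof is infinitesimal and works directly on $T^*M$, using the $\Gamma$-invariant metric for which \eqref{eq:2.10} holds:
\begin{itemize}
\item join two points of $R^-_{q^0,\eta_1^0,\alpha}$ by the curve whose $\varkappa_0$-image is a straight segment (the model rectangle is convex);
\item split the tangent vector along $\mathbb RV^-\oplus V_\perp^-$, and use \eqref{eq:2.14}, together with the fact that the curve stays within $O(\alpha)$ of $q^0$, to see that the $V^-$-component is $O(\alpha^2)$ and the rest is $O(\alpha)$;
\item apply \eqref{eq:2.10} to bound the length of the pushed-forward curve by $C(\alpha e^t+\alpha^2e^{2t})$.
\end{itemize}
The hypothesis $\alpha e^t<1$, which replaces ADM's appeal to compactness, is then used only to absorb the second term.

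You make the same mechanism global. You use exponential coordinates of the second kind centered at $q_1$, and the inverse function theorem to get $|\sigma|=O(\alpha^2)$; this is the finite version of the $O(\alpha^2)$ bound on the $V^-$-component. You then conjugate by the flow via \eqref{eq:2.7} and use left invariance to get uniformity in the base point. This makes the $e^{2t}$ versus $e^t$ scaling of the individual parameters completely explicit.

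The price is bookkeeping that the curve argument avoids:
\begin{itemize}
\item The lift to $T^*\mathbb{CH}^n$ is unnecessary, since the metric satisfying \eqref{eq:2.10} already descends to $M$.
\item The slow factors $e^{w_s},e^{w_u}$ are generated by $W_j^\pm,Z_j^\pm$, which live on the frame bundle $F^*\mathbb{CH}^n$ rather than on $S^*\mathbb{CH}^n$. So you must work in $SU(n,1)$ and project down, handling the dilation as a separate commuting factor.
\item You need inverse-function-theorem constants that are uniform in $q_1$.
\end{itemize}

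Finally, your claim that $\alpha e^t<1$ is needed to keep the group elements small is superfluous. By left invariance and the triangle inequality, $d(e,e^{a_1Y_1}\cdots e^{a_mY_m})\le\sum_j|a_j|\,|Y_j|$ for parameters of any size. So in your proof, as in the paper's, the hypothesis is used only to bound $\alpha^2e^{2t}\le\alpha e^t$.
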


The proof is identical to the proof of \cite[Lemma 2.5]{ADM}, except the final sentence of that proof does not hold since $M$ is not compact. Instead, we use our hypothesis $\alpha e^t<1$ to obtain the conclusion.

We conclude this section by proving two additional lemmas that we use in Section~\ref{sec:reduction-fup} to obtain the Theorem. The first is similar to \cite[Lemma 3.1]{ADM} but stated for convex cocompact complex hyperbolic manifolds as opposed to the compact complex hyperbolic case considered there.

Recall that for convex cocompact complex hyperbolic $M$, we have defined the fast horocyclic flows $e^{sV^\pm}: S^*M\to S^*M$. Similarly to \cite[Section 3.1]{ADM} and \cite[Definition 2.2]{KiMi}, we have

\begin{defi}\label{obj:2.3}
Let $V\in\{V^+,V^-\}$ and $e^{sV}:S^*M\to S^*M$ be the fast horocyclic flow. Then:
\begin{enumerate}[i)]
\item For $T>0$, a $V$-segment of length $T$ is a set of the form $\{e^{sV}(q):0\leq s\leq T\}$ where $q\in S^*M$.
\item A $V$-orbit is a set of the form $\{e^{sV}(q):s\in\mathbb R\}$, where $q\in S^*M$.
\item A set $U\subset S^*M$ is called $V$-dense if it intersects every $V$-orbit.
\end{enumerate}
\end{defi}

The following lemma, similar to \cite[Lemma 3.1]{ADM} (see also \cite[Section 2.2]{KiMi}) but stated for our noncompact $M$, will allow us to select appropriate cutoffs for constructing our symbols in the proof of the spectral gap. We have in mind the case when $F=K\cap\{|\zeta|_g=1\}$ where $K$ is the trapped set (see Section~\ref{subsec:modified-spectral-family}).

\begin{lem}\label{obj:2.4}
Let $V\in\{V^+,V^-\}$, let $F$ be a compact set in $S^*M$, and assume $F$ contains no $V$-orbit. Then there exists an open set $U\subset S^*M$ such that
\begin{enumerate}[i)]
\item $S^*M\setminus U$ is a compact set containing $F$ in its interior.
\item $U$ is $V$-dense.
\item There exists $T>0$ such that each $V$-segment of length $T$ intersects $U$.
\end{enumerate}
\end{lem}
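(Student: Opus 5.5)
The plan is a compactness argument. Since $F$ is compact and contains no $V$-orbit, every $V$-orbit has a point outside $F$. First we show a uniform version of this: there is $T_0>0$ such that every $V$-segment $\{e^{sV}(q):|s|\le T_0\}$ with $q\in F$ meets $S^*M\setminus F$. To prove it, argue by contradiction. Take $q_k\in F$ with $e^{sV}(q_k)\in F$ for all $|s|\le k$. Pass to a subsequence $q_k\to q\in F$, using compactness of $F$. Closedness of $F$ and continuity of the flow then give $e^{sV}(q)\in F$ for all $s$, so $F$ contains a $V$-orbit, which is a contradiction. Applying this at $e^{T_0V}(q)$ shows that in fact every $V$-segment of length $2T_0$ starting in $F$ leaves $F$.

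Next we upgrade "leaves $F$" to "enters a fixed open set with compact complement containing $F$ in its interior". Cover the compact set $F$ by finitely many points $q_1,\dots,q_N$ and open neighborhoods $O_j$. For each $q_j$, pick $s_j\in[0,2T_0]$ with $p_j:=e^{s_jV}(q_j)\notin F$. Since $F$ is closed, choose a small closed ball $B_j\ni p_j$ disjoint from a closed neighborhood $F'$ of $F$. Here $F'$ is a compact set containing $F$ in its interior, and we can shrink $F'$ so that it still avoids all the $B_j$, because $\operatorname{dist}(p_j,F)>0$. By continuity we shrink $O_j$ so that $e^{s_jV}(O_j)\subset B_j^\circ$. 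Set $U:=S^*M\setminus F'$. Then $U$ is open, and $S^*M\setminus U=F'$ is compact with $F$ in its interior, which gives i).

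For iii), take $T:=2T_0+L$, where $L$ is chosen as follows. A $V$-segment either lies entirely in $F'$ or meets $U$. If it meets $F$, write the point of contact as $e^{s_0V}(q)$ with $q$ in some $O_j$. Then the point $e^{(s_0+s_j)V}(q)$ lies in $B_j^\circ\subset U$, and this point is within the segment provided the segment extends $2T_0$ beyond $s_0$. It remains to treat segments that stay in $F'\setminus F$ for a long time. This is the main obstacle. We handle it by running the first step with $F'$ in place of $F$, choosing $F'$ so small that it still contains no $V$-orbit. Such a choice is possible: if every closed $\delta$-neighborhood $F_\delta$ contained a $V$-orbit through some $r_\delta\in F_\delta$, then a limit of the orbits would give a $V$-orbit in $F$, by the same limiting argument as in the first step. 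Applying the first step to the compact set $F'$ yields a $T$ such that every $V$-segment of length $T$ meets $S^*M\setminus F'=U$. This directly gives iii), and the direct argument for segments meeting $F$ is then no longer needed.

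Finally, ii) follows from iii), since every $V$-orbit contains segments of length $T$. So the essential points are these: the uniform (compactness) version of "no orbit in $F$", applied to a slightly fattened compact neighborhood $F'$ of $F$, and the check that this fattening can be chosen to contain no $V$-orbit. Noncompactness of $M$ plays no role, because all the limiting arguments take place inside the compact sets $F_\delta$.
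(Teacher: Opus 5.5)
Your argument is correct, but it takes a different route from the paper's proof.

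\textbf{The paper's route.} The paper never fattens $F$ metrically. It exhausts the open, $V$-dense set $S^*M\setminus F$ by nested open sets $U_j$ with $\overline{U_j}\cap F=\varnothing$. It then uses compactness of a closed neighborhood $\overline\Omega$ of $F$ to find a single $j$ whose $V$-flow-out $\bigcup_s e^{sV}(U_j)$ covers $\overline\Omega$. It sets $U=U_j\cup(S^*M\setminus\overline A)$, where $A$ is open with $F\subset A$ and $\overline A\subset\Omega$. This gives (i) and (ii). Finally, (iii) is deduced from (ii) by covering the compact set $S^*M\setminus U$ with the increasing open sets $\bigcup_{|s|\le T/2}e^{sV}(U)$.

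\textbf{Your route.} You prove that ``containing no $V$-orbit'' survives a small metric fattening of a compact set; this is your limiting argument with $F_\delta$. You then take $U=S^*M\setminus F_\delta$ and get (iii) directly from the sequential-compactness statement of your first step applied to $F_\delta$. In your argument (ii) is a trivial corollary of (iii), whereas the paper goes the other way.

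\textbf{Comparison.} Your approach is more direct and produces an explicit $U$, namely the complement of a metric neighborhood of $F$. The cost is that it needs a metric on $S^*M$ and the fact that $F_\delta$ is compact for small $\delta$. You use this silently, so you should state and justify it. It holds in any locally compact metric space, e.g.\ $S^*M$ with a Sasaki metric. The paper's argument is purely topological, using only open covers and local compactness.

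The $O_j$, $B_j$ construction in your second paragraph is a detour you end up not needing, and it can be deleted.
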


\begin{proof}
By hypothesis, $S^*M\setminus F$ is $V$-dense and open. Let $U_1\subset U_2\subset\cdots$ be a nested sequence of open sets such that
\[
\overline{U_j}\subset S^*M\setminus F,\qquad S^*M\setminus F=\bigcup_{j\geq1}U_j.
\]
Since $S^*M\setminus F$ is $V$-dense, we have $S^*M=\bigcup_{j\geq1}\widehat U_j$, where
\[
\widehat U_j:=\bigcup_{s\in\mathbb R}e^{sV}(U_j).
\]
Now let $\Omega\subset S^*M$ be an open set such that $F\subset\Omega$ and $\overline\Omega$ is compact.
Since $\overline\Omega\subset S^*M=\bigcup_{j\geq1}\widehat U_j$, there exists $j$ such that $\overline\Omega\subset\widehat U_j$.

Next, we let $W$ be an open set defined as follows: Let $A$ be an open set with $F\subset A$ and $\overline A\subset\Omega$. Set
\[
W:=S^*M\setminus\overline A.
\]
Finally, we define
\[
U':=U_j\cup W.
\]
Then $\overline{U'}\cap F=\varnothing$ and
\[
\bigcup_{s\in\mathbb R}e^{sV}(U')=\left(\bigcup_{s\in\mathbb R}e^{sV}(U_j)\right)\cup\left(\bigcup_{s\in\mathbb R}e^{sV}(W)\right)\supset\overline\Omega\cup W=S^*M.
\]
Thus, $U'$ is $V$-dense. If it does not already, we may add a set to $U'$ to form $U$ satisfying (i) and (ii).

To prove (iii), we note that $S^*M\setminus U$ is compact and
\[
S^*M\setminus U\subset S^*M=\bigcup_{T=1}^\infty\widetilde U_T
\]
where
\[
\widetilde U_T:=\bigcup_{|s|\leq T/2}e^{sV}(U).
\]
Since $\widetilde U_T$ is a nested sequence of open sets, there is $T$ such that $S^*M\setminus U\subset\widetilde U_T$. This proves (iii).
\end{proof}

Finally, we wish to make a connection between $V^\pm$-orbits and complex circles. We proceed similarly to \cite[Section A.1]{Ta}. Let $(z,\zeta^1,\ldots,\zeta^n)\in F^*\mathbb{CH}^n$; in particular, $\pi_S(z,\zeta^1,\ldots,\zeta^n)=(z,\zeta^1)$ (see \eqref{eq:2.6}) so we have
\begin{equation}
\langle z,z\rangle=-1,\qquad \langle z,\zeta^1\rangle=0,\qquad \langle\zeta^1,\zeta^1\rangle=1.\label{eq:2.15}
\end{equation}
By \eqref{eq:2.8} we have
\[
\pi_S\big(e^{sV^\pm}(z,\zeta^1,\ldots,\zeta^n)\big)=\big(z+ is(z\pm\zeta^1),\zeta^1+is(\mp z-\zeta^1)\big).
\]
Under the Hopf parametrization (we denote $\Delta_{\mathbb{S}^{2n-1}}:=\{(\alpha,\alpha):\alpha\in \mathbb{S}^{2n-1}\}$, and $z=(z_0,z')$, $\zeta=(\zeta_0,\zeta')$)
\[
\Phi:S^*\mathbb{CH}^n\simeq\big(\mathbb{S}^{2n-1}\times \mathbb{S}^{2n-1}\setminus\Delta_{\mathbb{S}^{2n-1}}\big)\times\mathbb R,
\]
\[
(z,\zeta)\longmapsto\left(\frac{z'+\zeta'}{z_0+\zeta_0},\frac{z'-\zeta'}{z_0-\zeta_0},T(z,\zeta)\right),
\]
where $T(z,\zeta)$ is a parameter on the geodesic $\varphi^{\mathbb R}(z,\zeta)$, we calculate
\[
\Phi\big(\pi_S(e^{sV^+}(z,\zeta^1,\ldots,\zeta^n))\big)
=\left(\frac{z'+(\zeta^1)'}{z_0+\zeta_0^1},\frac{z'-(\zeta^1)'+2is(z'+(\zeta^1)')}{z_0-\zeta_0^1+2is(z_0+\zeta_0^1)},T\right),
\]
\[
\Phi\big(\pi_S(e^{sV^-}(z,\zeta^1,\ldots,\zeta^n))\big)
=\left(\frac{z'+(\zeta^1)'+2is(z'-(\zeta^1)')}{z_0+\zeta_0^1+2is(z_0-\zeta_0^1)},\frac{z'-(\zeta^1)'}{z_0-\zeta_0^1},T\right).
\]
It is not hard to see that the nontrivial components
\begin{equation}
C_\pm=\left\{\frac{z'\mp(\zeta^1)'+2is(z'\pm(\zeta^1)')}{z_0\mp\zeta_0^1+2is(z_0\pm\zeta_0^1)}:s\in\mathbb R\cup\{\infty\}\right\}\label{eq:2.16}
\end{equation}
give complex circles in $\mathbb{S}^{2n-1}$. Indeed, in the notation of \eqref{eq:2.2} and \eqref{eq:2.4},
\[
z\mp\zeta^1+2is(z\pm\zeta^1)\in B_{z,\zeta^1}\cap A_0
\]
and the claim follows since $B_{z,\zeta^1}\cap\{\eta_0=1\}\cap \mathbb{S}^{2n-1}$ is a complex circle. Moreover, \eqref{eq:2.16} cannot reduce to a single point unless $z$ and $\zeta^1$ are linearly dependent, which is impossible by \eqref{eq:2.15}.

We now show that the converse holds:

\begin{lem}\label{obj:2.5}
Any complex circle $C\subset \mathbb{S}^{2n-1}$ can be written as \eqref{eq:2.16} for some $(z,\zeta)\in S^*\mathbb{CH}^n$.
\end{lem}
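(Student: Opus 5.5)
Given a complex circle $C$, by definition $C=B_{w_1,w_2}\cap\{\eta_0=1\}\cap\mathbb{S}^{2n-1}$ for some indefinite complex $2$-plane $B=B_{w_1,w_2}\subset\mathbb C^{n,1}$. The plan is to find $z,\zeta\in B$ satisfying \eqref{eq:2.15}. Then $B_{z,\zeta}=B$, and the computation preceding the lemma shows that the set \eqref{eq:2.16} lies in $B\cap A_0$ projected to the sphere, hence in $C$. It remains to check that \eqref{eq:2.16} exhausts $C$.

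\textbf{Step 1 (an adapted basis of $B$).} The restriction of $\langle\cdot,\cdot\rangle$ to $B$ is a nondegenerate Hermitian form of signature $(1,1)$. Indeed, $B$ is indefinite: it meets $\mathbb B^n$ after projection, so it contains a negative vector. Its orthogonal complement inside $B$ must then be positive, since the form on $\mathbb C^{n,1}$ has only one negative direction. So pick $z\in B$ with $\langle z,z\rangle=-1$, and $\zeta\in B$ with $\langle z,\zeta\rangle=0$ and $\langle\zeta,\zeta\rangle=1$ (Gram--Schmidt within $B$). Then $(z,\zeta)\in S\mathbb{CH}^n$ by \eqref{eq:2.5}. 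Complete it to a coframe $(z,\zeta,\zeta^2,\dots,\zeta^n)$ by choosing an orthonormal basis of $B^\perp$, which is positive definite, and adjusting a phase so the determinant is $1$. This is only needed so that $e^{sV^\pm}$ acts as in \eqref{eq:2.8}.

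\textbf{Step 2 (surjectivity onto $C$).} Work with the $+$ sign; the other is symmetric. Set $u=z-\zeta$ and $v=z+\zeta$. Both are null, and $\langle u,v\rangle=-2\neq0$, so they form a basis of $B$. A vector $\alpha u+\beta v$ has norm $-2\Re(\alpha\bar\beta)\cdot 2=-4\Re(\alpha\bar\beta)$, up to the factor from the cross terms. So the null lines of $B$ are exactly the lines $[u+\mu v]$ with $\Re\mu=0$, i.e. $\mu=2is$, $s\in\mathbb R$, together with $[v]$, corresponding to $s=\infty$. Every point of $C$ is the projection $\mathbb P$ of a null line in $B$. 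Its $z_0$-coordinate is nonzero automatically, because a null vector with $z_0=0$ must vanish, since $|z'|^2=|z_0|^2$. Hence every point of $C$ equals $\mathbb P(u+2isv)$ for some $s\in\mathbb R\cup\{\infty\}$, which is precisely the parametrization \eqref{eq:2.16} with $\zeta^1=\zeta$.

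\textbf{Main obstacle.} The only real point is Step 2: verifying that the real one-parameter family $\mu\in i\mathbb R\cup\{\infty\}$ gives \emph{all} null lines of $B$, and not just a subset. This reduces to the elementary computation $\langle u+\mu v,u+\mu v\rangle=2\Re(\mu)\langle u,v\rangle$, which I expect to be routine. A secondary detail is making sure the chosen $(z,\zeta)$ is well defined modulo $U(1)$ and lies in the coframe bundle. Neither choice affects the image set \eqref{eq:2.16}.
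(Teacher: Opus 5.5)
Your proof is correct, but it takes a different route from the paper's. The paper argues by homogeneity. It applies an element of $SU(n,1)$ to move $C$ to the standard circle $\{(e^{i\theta},0,\ldots,0)\}$, parametrizes that circle as $s\mapsto\bigl(\frac{1-2is}{1+2is},0,\ldots,0\bigr)$, and checks that this is $C_+$ for the explicit pair $z=(1,0,\ldots,0)$, $\zeta=(0,-1,0,\ldots,0)$. That is very short, but it takes two facts for granted: the standard transitivity of $SU(n,1)$ on complex circles, and the equivariance of \eqref{eq:2.16} under $g\in SU(n,1)$ (which holds because $\mathbb P$ intertwines the linear action and $(gz,g\zeta)$ still satisfies \eqref{eq:2.15}).

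You instead work intrinsically in an arbitrary indefinite plane $B$. Signature $(1,1)$ gives an orthonormal pair $z,\zeta$ spanning $B$. The null basis $u=z-\zeta$, $v=z+\zeta$ has $\langle u,v\rangle=-2$ real, so $\langle\alpha u+\beta v,\alpha u+\beta v\rangle=2\Re(\alpha\bar\beta\langle u,v\rangle)=-4\Re(\alpha\bar\beta)$. This is the clean form of your garbled ``up to the factor from the cross terms'' sentence. Hence the null lines of $B$ are exactly $[u+2isv]$, $s\in\mathbb R\cup\{\infty\}$, and all of them have nonzero $0$-th coordinate.

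Your approach is self-contained and yields slightly more. Every such pair $(z,\zeta)$ spanning $B$ works, and the same pair gives $C=C_+=C_-$, so the minus sign needs no separate argument. Completing your Step 1 frame to an orthonormal basis of $\mathbb C^{n,1}$ is also precisely how one proves the transitivity the paper invokes. The coframe completion and determinant adjustment are unnecessary for this lemma, since \eqref{eq:2.16} depends only on $(z,\zeta^1)$.
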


\begin{proof}
We will show that $C$ may be written as $C_+$ for appropriate $(z,\zeta)$. The proof for $C_-$ is analogous.

By applying an element of $SU(n,1)$ we may assume that
\[
C=\{(\zeta,0,\ldots,0):|\zeta|=1\}.
\]
This can be parametrized as
\[
C=\left\{\left(\frac{1-2is}{1+2is},0,\ldots,0\right):s\in\mathbb R\cup\{\infty\}\right\}.
\]
This is \eqref{eq:2.16} for $C_+$ with $z=(1,0,\ldots,0)$ and $\zeta=(0,-1,0,\ldots,0)$, which satisfy \eqref{eq:2.15} and hence determine an element of $S^*\mathbb{CH}^n$.
\end{proof}

Notice in particular that under the covering $S^*\mathbb{CH}^n\to S^*M$ and identification
\[
K\cap\{|\zeta|_g=1\}\simeq\big(\Lambda_\Gamma\times\Lambda_\Gamma\setminus\Delta_{\mathbb{S}^{2n-1}}\big)\times\mathbb R,
\]
where $K\subset T^*M$ is the trapped set defined in Section~\ref{subsec:modified-spectral-family}, Lemma~\ref{obj:2.5} shows that $\Lambda_\Gamma$ contains a complex circle if and only if $K\cap\{|\zeta|_g=1\}$ contains the closure of a $V^\pm$-orbit. We use this fact in Section~\ref{sec:reduction-fup} to prove the spectral gap.

\section{Scattering resolvent and fine microlocal estimates}\label{sec:scattering-resolvent}

In this section we describe the modified spectral family of the Laplacian $\mathcal P_h(\omega)$ obtained in \cite{quan2021microlocal}, which extended the method of \cite{Vas13a}, \cite{Vas13b} to the complex hyperbolic setting. We then review an anisotropic pseudodifferential calculus described in \cite[Appendix]{DyJi}, obtaining a few new results for this calculus. These two technical tools allow a straightforward adaptation of the methods of \cite{DyZa} as improved by \cite{Dy19a} to the complex hyperbolic setting and we use this to prove operator identities involving $\mathcal P_h(\omega)$ that imply fine microlocal estimates on solutions to $\mathcal P_h(\omega)u=0$. These results are combined with the Fractal Uncertainty Principle in the next section to prove the resolvent bound and spectral gap in our Theorem.

This section uses semiclassical analysis; see \cite{Zw12} and \cite[Appendix E]{DyZw} for an introduction and basic facts. Our notation will follow \cite[Section 2]{DyZa} and \cite[Section 2.3]{Dy19a}, and in particular we use
\begin{itemize}
\item the classical symbol classes $S^k(T^*M)$, $S_h^k(T^*M)$ and the corresponding class of pseudodifferential operators $\Psi_h^k(M)$;
\item the principal symbol map $\sigma_h:\Psi_h^k(M)\to S^k(T^*M)$;
\item the wavefront set $\WF_h(A)\subset\overline{T}^*M$ and the elliptic set $\operatorname{ell}_h(A)\subset\overline{T}^*M$ of $A\in\Psi_h^k(M)$ where $\overline{T}^*M$ is the fiber-radially compactified cotangent bundle;
\item the class $\Psi_h^{\rm comp}(M)\subset\bigcap_k\Psi_h^k(M)$ of compactly supported and compactly microlocalized pseudodifferential operators;
\item the class $I_h^{\rm comp}(\varkappa)$ of compactly supported and compactly microlocalized Fourier integral operators associated to an exact symplectomorphism $\varkappa:U_2\to U_1$, where $U_j\subset T^*M_j$ are open sets and $M_1,M_2$ are two manifolds of the same dimension, see, e.g., \cite[Section 2.2]{DyZa} or \cite[Section A.3]{DyJi}.
\end{itemize}
In Section~\ref{subsec:anisotropic-calculus} we will make many more definitions regarding the calculus of \cite[Appendix]{DyJi}.

\subsection{Scattering resolvent and the modified spectral family of the Laplacian.}\label{subsec:modified-spectral-family}
In this section we follow parts of \cite[Sections 4.1 and 4.2]{DyZa} and \cite[Section 2]{Dy19a} but using the modified spectral family of the Laplacian obtained in \cite{quan2021microlocal}.

Let $(M=\Gamma\backslash\mathbb{CH}^n,g)$ be a convex cocompact complex hyperbolic manifold. The geodesic flow
\[
\varphi^t:T^*M\setminus0\to T^*M\setminus0
\]
is the Hamiltonian flow of the symbol
\[
p\in C^\infty(T^*M\setminus0),\qquad p(z,\zeta)=|\zeta|_g.
\]
Just as in \cite[Section 4]{DyZa} or \cite[Section 2]{Dy19a}, we may find a function
\[
r:M\to\mathbb R,\qquad \ddot r>0\quad\text{on }\{r\geq0\}\cap\{\dot r=0\},
\]
where the dots are derivatives with respect to the flow of the lift of $r$ to $T^*M\setminus0$. Indeed, one may take $r:=\phi-r_1$ for a boundary defining function of a compactification of $M$ and a large constant $r_1>0$. Define the incoming/outgoing tails by
\[
\Gamma_\pm:=\{(z,\zeta)\in T^*M\setminus0:(\varphi^t(z,\zeta))\text{ is bounded as }t\to\mp\infty\},
\]
and the trapped set
\[
K=\Gamma_+\cap\Gamma_-.
\]
Fix $\nu_0>0$ and set
\[
\Omega:=[1-2h,1+2h]+ih[-\nu_0,1/4].
\]
Following \cite[Sections 2.3 and 2.4]{quan2021microlocal} allows us to define a semiclassical differential operator
\begin{equation}
\mathcal P_h(\omega)\in\Psi_h^2(M_{\rm ext});\qquad \mathcal P_h(\omega):=\psi_2\left(-h^2\Delta_g-\frac{h^2n^2}{4}-\omega^2\right)\psi_1\quad\text{on }M\label{eq:3.1}
\end{equation}
with $M_{\rm ext}$ a compact manifold with boundary containing $M$ as an open subset (see \cite[(2.17)]{quan2021microlocal}) and $\psi_1,\psi_2\in C^\infty(M)$ certain nonvanishing functions depending on $h,\omega$, in a way that is almost exactly analogous to the construction of Vasy \cite{Vas13a}, \cite{Vas13b} on real hyperbolic spaces. In particular, as in the paragraph before \cite[(3.14)]{Vas13b}, for any large $r_0$ we may require that
\[
\psi_1=\psi_2=1\qquad\text{near }\{r\leq r_0\},
\]
which implies that
\begin{equation}
\sigma_h(\mathcal P_h(\omega))=p^2-\omega^2\qquad\text{near }\{r\leq r_0\}.\label{eq:3.2}
\end{equation}

Some comments about the construction of the operator $\mathcal P_h(\omega)$ are in order. In the notation of \cite{quan2021microlocal}, $\mathcal P_h(\omega)$ is the operator $P_{h,\omega}-iQ_h$ where $P_{h,\omega}$ is the semiclassically rescaled, extended operator defined in \cite[Section 2.4]{quan2021microlocal}, and $Q_h\in\Psi_h^2(M_{\rm ext})$ is a complex absorbing potential supported in $M_{\rm ext}\setminus M$ satisfying the properties of \cite[Section 2.3.2]{quan2021microlocal}. This is exactly analogous to the operator used in \cite{DyZa} and \cite{Dy19a}.

A major difference, however, is that although the extended operator is still elliptic on $M$, in the complex hyperbolic setting it is no longer hyperbolic in the extended region. Instead, it is ultrahyperbolic and the characteristic set for the extended operator is connected. The source/sink estimates needed to prove the Fredholm property and hence meromorphic continuation of the extended operator may still be obtained, see \cite[Section 2.4]{quan2021microlocal}, but there is a slight complication as compared to the real hyperbolic setting. Namely, we must use variable order Sobolev spaces (see \cite[Sections 2.2.1 and 2.8]{quan2021microlocal} for definitions and relevant properties) as opposed to the standard spaces $H_h^s$, $s\in\mathbb R$. All that is really essential to our analysis is that if $s_\pm$ are real numbers such that $s_+>\frac12+\nu_0$ and $0\leq s_-<\frac14$, and $\phi(z,\zeta)\in C^\infty(T^*M_{\rm ext}\setminus0)$ is the order function constructed in \cite[Section 2.5]{quan2021microlocal}, then $\mathcal P_h(\omega):\mathcal X\to\mathcal Y$ is a Fredholm operator in a region containing $\Omega$, where
\begin{equation}
\mathcal X:=\{u\in H_h^\phi(M_{\rm ext}):\mathcal P_h(1)u\in H_h^{\phi-1}(M_{\rm ext})\},\qquad \mathcal Y:=H_h^{\phi-1}(M_{\rm ext})\label{eq:3.3}
\end{equation}
with norm
\[
\|u\|_{\mathcal X}^2:=\|u\|_{H_h^\phi(M_{\rm ext})}^2+\|\mathcal P_h(1)u\|_{H_h^{\phi-1}(M_{\rm ext})}^2.
\]
Here $H_h^\phi$ are variable order Sobolev spaces associated to the order function $\phi$, and satisfy $H_h^{s_+}\subset H_h^\phi\subset H_h^{s_-}$. The inverse of $\mathcal P_h(\omega)$ has poles of finite rank, and the set of these poles contains the set of poles of the resolvent. In particular, we remark that it follows from \eqref{eq:3.1} that for $f\in C_c^\infty(M)$, we have
\begin{equation}
\left(-h^2\Delta_g-\frac{h^2n^2}{4}-\omega^2\right)^{-1}f=\left.\psi_1\big(\mathcal P_h(\omega)^{-1}\psi_2f\big)\right|_M.\label{eq:3.4}
\end{equation}
See \cite[Section 2]{quan2021microlocal} for more on the construction and properties of $\mathcal P_h(\omega)$. Section~\ref{subsec:fine-microlocalization} shall apply the approach of \cite{DyZa} as refined in \cite{Dy19a} to further analyze the properties of this operator.

\subsection{Anisotropic semiclassical calculus and an Egorov's theorem.}\label{subsec:anisotropic-calculus}
We now briefly review the semiclassical analysis needed to prove our microlocalization results in the next section. In addition to the standard symbol classes recalled at the beginning of this section, we shall also need an anisotropic semiclassical calculus described in \cite[Appendix]{DyJi}, which generalizes a calculus originating in \cite{DyZa}. Our presentation of this calculus will follow \cite[Section 4.2.1]{ADM}.

For $M=\Gamma\backslash\mathbb{CH}^n$, a convex cocompact complex hyperbolic manifold, we let
\[
L_u:=\mathbb RX\oplus E_u,\qquad L_s:=\mathbb RX\oplus E_s
\]
be the weak unstable/stable foliations, where $X,E_u,E_s$ are defined in Section~\ref{subsec:dynamics}. Then by \cite[Lemma 2.1 and Corollary 2.3]{ADM}, $L_u,L_s$ are Lagrangian foliations; that is, every fiber of $L_u$ or $L_s$ is a Lagrangian subspace of $T(T^*M\setminus0)$, and both $L_u$ and $L_s$ are Frobenius integrable.

Fix $L\in\{L_u,L_s\}$ and two parameters
\begin{equation}
0\leq\rho<1,\qquad 0\leq\rho'\leq\frac12\rho,\qquad \rho+\rho'<1.\label{eq:3.5}
\end{equation}
Then we define the symbol class $S^{\rm comp}_{L,\rho,\rho'}(T^*M\setminus0)$ from \cite[Section A.1]{DyJi} to be the smooth, $h$-dependent functions $a(z,\zeta;h)$ with support contained in an $h$-independent compact set in $T^*M\setminus0$, and satisfying the following derivative bounds: For any vector fields $Y_1,\ldots,Y_m$, $Q_1,\ldots,Q_k$ on $T^*M\setminus0$ with the $Y_i$, $i=1,\ldots,m$ tangent to $L$, there exists a constant $C$ independent of $h$ such that
\[
\sup_{z,\zeta}|Y_1\cdots Y_mQ_1\cdots Q_ka(z,\zeta;h)|\leq Ch^{-\rho k-\rho'm},\qquad 0<h\leq1.
\]
Such symbols may be quantized using the quantization $\Op_h^L$ defined in \cite[Section A.4]{DyJi}. Indeed, if $S^{\rm comp}_{L_0,\rho,\rho'}(T^*\mathbb R^{2n})$ is the model symbol class constructed in \cite[Section A.2]{DyJi}, and $a\in S^{\rm comp}_{L,\rho,\rho'}(T^*M\setminus0)$, then there are open sets $U_\ell\subset T^*M\setminus0$ forming a locally finite cover of $\supp a$, and we may write
\[
\Op_h^L(a):=\sum_{\ell=1}^N B_\ell'\Op_h(a_\ell)B_\ell,\qquad a_\ell=(\chi_\ell a)\circ\varkappa_\ell^{-1}\in S^{\rm comp}_{L_0,\rho,\rho'}(T^*\mathbb R^{2n}),
\]
where for each $\ell$, $\varkappa_\ell:U_\ell\to T^*\mathbb R^{2n}$ is an exact symplectomorphism onto its image that maps $L$ to $L_0$, $B_\ell\in I_h^{\rm comp}(\varkappa_\ell)$, $B_\ell'\in I_h^{\rm comp}(\varkappa_\ell^{-1})$, the $\sigma_h(B_\ell'B_\ell)\in C_c^\infty(U_\ell)$ form a partition of unity on $\supp a$, $\chi_\ell\in C_c^\infty(U_\ell)$ is equal to $1$ near $\supp\sigma_h(B_\ell'B_\ell)$, and $\Op_h(a_\ell)$ is the standard quantization \cite[(A.5)]{DyJi}:
\[
\Op_h(a_\ell)f(y)=(2\pi h)^{-2n}\int_{\mathbb R^{4n}}e^{\frac{i}{h}(y-y')\cdot\eta}a_\ell(y,\eta)f(y')\,dy'\,d\eta.
\]

Similarly, a family $A=A(h):\mathcal D'(M)\to C_c^\infty(M)$ is in the class $\Psi^{\rm comp}_{h,L,\rho,\rho'}(M)$ if there exist open sets $U_\ell\subset T^*M\setminus0$ such that $A$ can be written
\begin{equation}
A=\sum_{\ell=1}^N B_\ell'\Op_h(a_\ell)B_\ell+O(h^\infty)_{\mathcal D'(M)\to C_c^\infty(M)}\label{eq:3.6}
\end{equation}
for $\varkappa_\ell$ and $B_\ell\in I_h^{\rm comp}(\varkappa_\ell)$, $B_\ell'\in I_h^{\rm comp}(\varkappa_\ell^{-1})$ as above, and some $a_\ell\in S^{\rm comp}_{L_0,\rho,\rho'}(T^*\mathbb R^{2n})$. In this case, we define the principal symbol $\sigma_h^L(A)\in S^{\rm comp}_{L,\rho,\rho'}(T^*M\setminus0)/h^{1-\rho-\rho'}S^{\rm comp}_{L,\rho,\rho'}(T^*M\setminus0)$ of $A$ by
\[
\sigma_h^L(A):=\sum_{\ell=1}^N\sigma_h(B_\ell'B_\ell)(a_\ell\circ\varkappa_\ell),
\]
which is valid for any representation \eqref{eq:3.6}.

We refer to \cite[Appendix]{DyJi} and \cite[Section 4.2.1]{ADM} for basic properties of this calculus that we shall use throughout the rest of the paper. Here we recall a few additional properties needed in the next section. These properties are analogues of results in \cite[Section 3.3]{DyZa} and some of them have similar versions stated in \cite[Appendix]{DyJi}. However, with minor alterations and minimal work in this section, we give versions that are exactly analogous to the statements in \cite[Section 3.3]{DyZa}. This allows us to simply quote these analogues to prove the more complicated results in the next section, which generalize \cite[Lemmas 2.2--2.7]{DyZa}. In other words, it is easier for us to modify the semiclassical statements in this section than to use the versions of \cite[Appendix]{DyJi} and modify the proofs of results in the next section, though either approach is possible.

We begin with the following definition, which is \cite[Definition 3.4]{DyZa}:

\begin{defi}\label{obj:3.1}
Let $a\in S^{\rm comp}_{L,\rho,\rho'}(T^*M\setminus0)$ and let $h_j\to0$, $(z_j,\zeta_j)\in T^*M\setminus0$ be some sequences. We say that $a$ is $O(h^\infty)$ along $(z_j,\zeta_j;h_j)$ if for each $N$ and any vector fields $Z_1,\ldots,Z_m$ on $T^*M\setminus0$, there exists a constant $C$ such that
\[
|Z_1\cdots Z_ma(z_j,\zeta_j;h_j)|\leq Ch_j^N.
\]
\end{defi}

We now prove the following analogue of \cite[Lemma 3.12]{DyZa}. To state it, we first recall the following definition from \cite[Definition E.36]{DyZw}. Let $M_1,M_2$ be manifolds and
\[
B=B(h):C_c^\infty(M_2)\to\mathcal D'(M_1)
\]
an $h$-tempered family of operators. Then $\WF'_h(B)\subset\overline{T^*(M_1\times M_2)}$ is defined as
\[
\WF'_h(B):=\{(z,\zeta,y,\eta):(z,\zeta,y,-\eta)\in\WF_h(K_B)\}
\]
where $K_B(z,y)\in\mathcal D'(M_1\times M_2)$ is the Schwartz kernel of $B$. We let $\pi_1,\pi_2$ denote the projections onto the first two and second two variables, respectively.

\begin{lem}\label{obj:3.2}
Let $A\in\Psi^{\rm comp}_{h,L,\rho,\rho'}(M)$. Then
\begin{enumerate}[1.]
\item $A$ is bounded on $L^2(M)$ uniformly in $h$ and $\WF_h(A)$ is compact.
\item If $U$ is an open set containing $\WF_h(A)$, $\varkappa:\overline U\to T^*\mathbb R^{2n}$ is an exact symplectomorphism onto its image which maps $L$ to $L_0$, and $B\in I_h^{\rm comp}(\varkappa)$, $B'\in I_h^{\rm comp}(\varkappa^{-1})$, then
\[
BAB'=\Op_h(a)+O(h^\infty)_{L^2\to L^2}
\]
for some $a\in S^{\rm comp}_{L_0,\rho,\rho'}(T^*\mathbb R^{2n})$, $\supp a\subset\varkappa(\overline U)$, and for each representation \eqref{eq:3.6} of $A$,
\begin{equation}
a\circ\varkappa=\sigma_h(B'B)\sum_{\ell=1}^N\sigma_h(B_\ell'B_\ell)(a_\ell\circ\varkappa_\ell)+O(h^{1-\rho})_{S^{\rm comp}_{L_0,\rho,\rho'}}.\label{eq:3.7}
\end{equation}
Moreover, if $h_j\to0$ and $(z_j,\zeta_j)\in U$ are sequences such that for each $\ell$, either $(z_j,\zeta_j)\notin\pi_1(\WF'_h(B_\ell))\cap\pi_2(\WF'_h(B_\ell'))$ for all $j$ or $a_\ell\circ\varkappa_\ell$ is $O(h^\infty)$ along $(z_j,\zeta_j;h_j)$ for all $\ell$ in the sense of Definition~\ref{obj:3.1}, then $a\circ\varkappa$ is $O(h^\infty)$ along $(z_j,\zeta_j;h_j)$ as well.
\end{enumerate}
\end{lem}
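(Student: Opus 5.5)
The plan is to reduce both parts to properties of the model calculus $\Psi^{\rm comp}_{h,L_0,\rho,\rho'}(\mathbb R^{2n})$ from \cite[Appendix]{DyJi}, following \cite[Lemma 3.12]{DyZa}.

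\textbf{Part 1.} Start from a representation \eqref{eq:3.6}. Each $B_\ell,B_\ell'$ is a compactly microlocalized Fourier integral operator, hence bounded on $L^2$ uniformly in $h$. Each $\Op_h(a_\ell)$ with $a_\ell\in S^{\rm comp}_{L_0,\rho,\rho'}$ is uniformly $L^2$-bounded: for $\rho+\rho'<1$ this follows from the rescaling argument of \cite[Section A.2]{DyJi}, which reduces the estimate to Calder\'on--Vaillancourt. The remainder is $O(h^\infty)_{\mathcal D'\to C_c^\infty}$, so its kernel is smooth and compactly supported, and it is therefore bounded. For the wavefront set, use
\[
\WF_h(B_\ell'\Op_h(a_\ell)B_\ell)\subset\varkappa_\ell^{-1}\big(\WF_h(\Op_h(a_\ell))\big)\cap\pi_1(\WF_h'(B'_\ell)),
\]
where $\pi_1(\WF_h'(B'_\ell))$ is compact in $T^*M\setminus0$. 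The union over finitely many $\ell$ is then compact.

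\textbf{Part 2.} Write
\[
BAB'=\sum_\ell (BB_\ell')\Op_h(a_\ell)(B_\ell B')+O(h^\infty).
\]
The operators $BB_\ell'\in I_h^{\rm comp}(\varkappa\circ\varkappa_\ell^{-1})$ and $B_\ell B'\in I_h^{\rm comp}(\varkappa_\ell\circ\varkappa^{-1})$ are associated to the symplectomorphism $\varkappa_\ell\circ\varkappa^{-1}$, which is defined on $\varkappa(U\cap U_\ell)$ and maps $L_0$ to $L_0$, since both $\varkappa$ and $\varkappa_\ell$ map $L$ to $L_0$.

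The key input is the Egorov-type statement of \cite[Lemma A.7 / Proposition A.?]{DyJi}. For $\tilde\varkappa$ preserving $L_0$ and FIOs $T\in I_h^{\rm comp}(\tilde\varkappa^{-1})$, $T'\in I_h^{\rm comp}(\tilde\varkappa)$, it gives
\[
T\Op_h(b)T'=\Op_h(\tilde b)+O(h^\infty),
\qquad
\tilde b=\sigma_h(T'T)\circ\tilde\varkappa^{-1}\cdot(b\circ\tilde\varkappa^{-1})+O(h^{1-\rho})_{S^{\rm comp}_{L_0,\rho,\rho'}}.
\]
Moreover, $\tilde b$ has a full asymptotic expansion whose terms are differential operators applied to $b\circ\tilde\varkappa^{-1}$, each multiplied by smooth factors with supports inside $\WF_h'$ of the FIOs.

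Apply this to each term and sum the resulting symbols to get $a$. The composition $\sigma_h(B_\ell B')\cdot\sigma_h(BB_\ell')$ pulled back to $U$ equals $\sigma_h(B'B)\,\sigma_h(B_\ell'B_\ell)$, by multiplicativity of principal symbols of FIO compositions. Pulling back by $\varkappa$ then gives \eqref{eq:3.7}. The support claim $\supp a\subset\varkappa(\overline U)$ follows because we may cut off by a function equal to $1$ near $\varkappa(\WF_h(A))$ at cost $O(h^\infty)$.

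\textbf{The $O(h^\infty)$ statement.} This is the main obstacle, since we need the full expansion rather than only its principal part. Each term of the expansion of $a\circ\varkappa$ is a finite sum of derivatives of $a_\ell\circ\varkappa_\ell$, with $h$-power gains, times smooth coefficients supported in $\pi_1(\WF'_h(B_\ell))\cap\pi_2(\WF'_h(B_\ell'))$; the remainder after $N$ terms is $O(h^N)$ in every seminorm. Fix a sequence $(z_j,\zeta_j;h_j)$ and consider each $\ell$:
\begin{enumerate}[i)]
\item If $(z_j,\zeta_j)$ lies outside $\pi_1(\WF'_h(B_\ell))\cap\pi_2(\WF'_h(B_\ell'))$, then the $\ell$-th contribution, including all derivatives, vanishes to infinite order there, up to the $O(h^\infty)$ remainder.
\item Otherwise, every derivative of $a_\ell\circ\varkappa_\ell$ is $O(h_j^\infty)$ along the sequence, and so is every term in the expansion.
\end{enumerate}
Truncating the expansion at order $N$ depending on the target power finishes the argument. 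The care needed is to verify that the remainder in the \cite{DyJi} Egorov expansion is $O(h^N)$ in $C^k$ uniformly, i.e. in the full symbol seminorms, and not merely in the operator norm. I would do this by reproving the expansion in the rescaled model coordinates, where the symbols become standard $S_{\rho+\rho'}$-type objects and the usual stationary phase argument gives seminorm control.
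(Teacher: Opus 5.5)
Your proposal is correct and follows essentially the same route as the paper. The paper uses the same decomposition $BAB'=\sum_\ell (BB_\ell')\Op_h(a_\ell)(B_\ell B')$, the same observation that $\varkappa_\ell\circ\varkappa^{-1}$ preserves $L_0$, conjugation via the model-calculus FIO lemma (which is \cite[Lemma A.3]{DyJi}, not the long-time Egorov statement \cite[Lemma A.7]{DyJi}), and multiplicativity of principal symbols \cite[(2.12)]{DyZa} to get \eqref{eq:3.7}. For the final $O(h^\infty)$ claim, the paper extracts the microlocal vanishing statement from the proof of \cite[Lemma A.3]{DyJi} and argues as in \cite[Lemma 3.12]{DyZa}; your full-expansion argument makes that same step explicit. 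Two small slips do not affect the argument: with $T'\in I_h^{\rm comp}(\tilde\varkappa)$ the conjugated symbol is $\sigma_h(TT')\,(b\circ\tilde\varkappa)$ rather than $b\circ\tilde\varkappa^{-1}$, and since the symbols are exotic each term of the expansion gains only a fixed positive power of $h$ rather than $h$ itself.
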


\begin{proof}
We follow the proof of \cite[Lemma 3.12]{DyZa}. Part one follows easily from properties of the symbol class $S^{\rm comp}_{L_0,\rho,\rho'}(T^*\mathbb R^{2n})$, see \cite[Section A.2]{DyJi}, and the representation \eqref{eq:3.6}.

Next write $A$ using \eqref{eq:3.6} to find
\[
BAB'=\sum_{\ell=1}^N(BB_\ell')\Op_h(a_\ell)(B_\ell B')+O(h^\infty)_{\mathcal D'\to C_c^\infty}.
\]
If $U_\ell$ is the domain of $\varkappa_\ell$, the function $\varkappa_\ell':=\varkappa_\ell\circ\varkappa^{-1}:\varkappa(U\cap U_\ell)\to T^*\mathbb R^{2n}$ is a symplectomorphism onto its image and preserves $L_0$. Thus $B_\ell B'\in I_h^{\rm comp}(\varkappa_\ell')$ and $B B_\ell'\in I_h^{\rm comp}((\varkappa_\ell')^{-1})$. We conclude part two by applying \cite[Lemma A.3]{DyJi} (which adapts \cite[Lemma 3.10]{DyZa} to the class $S^{\rm comp}_{L_0,\rho,\rho'}$). Then \eqref{eq:3.7} follows from the identity $\sigma_h(BB_\ell' B_\ell B')=(\sigma_h(B_\ell'B_\ell)\sigma_h(B'B))\circ\varkappa^{-1}$, which in turn follows from \cite[(2.12)]{DyZa}.

The final statement is obtained as follows: From the proof of \cite[Lemma A.3]{DyJi} (which is an analogue of \cite[Lemma 3.10]{DyZa}), we obtain a microlocal vanishing statement identical to the end of the latter lemma. Then the final statement of Lemma~\ref{obj:3.2} follows in the same way that the final statement of \cite[Lemma 3.12]{DyZa} follows from \cite[Lemma 3.10]{DyZa}. We conclude the proof.
\end{proof}

Next we improve the bound stated in part 1 of Lemma~\ref{obj:3.2}. It follows in exactly the same way as in \cite[Lemma 3.15]{DyZa}, see also \cite[Lemma A.5]{DyJi}, which makes no use of the assumption that $M$ be compact.

\begin{prop}\label{obj:3.3}
Let $A\in\Psi^{\rm comp}_{h,L,\rho,\rho'}(M)$. Then as $h\to0$,
\[
\|A\|_{L^2(M)\to L^2(M)}\leq\sup|\sigma_h^L(A)|+o(1).
\]
\end{prop}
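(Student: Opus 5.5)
We follow the strategy of \cite[Lemma 3.15]{DyZa}, i.e.\ the standard $C^*$-type square root argument, using the anisotropic calculus of \cite[Appendix]{DyJi} in place of the calculus of \cite{DyZa}.

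Put $m:=\sup|\sigma_h^L(A)|$ (for a fixed representative of the principal symbol) and fix $\delta>0$. It is enough to prove
\[
\|Au\|_{L^2}^2\leq(m+\delta)^2\|u\|_{L^2}^2+O(h^{1-\rho-\rho'})\|u\|_{L^2}^2 .
\]
\begin{enumerate}[1.]
\item By the product and adjoint rules of the calculus, $A^*A\in\Psi^{\rm comp}_{h,L,\rho,\rho'}(M)$, with principal symbol $|\sigma_h^L(A)|^2$ modulo $h^{1-\rho-\rho'}S^{\rm comp}_{L,\rho,\rho'}$ (see \cite[Lemma A.4]{DyJi}, \cite[Section 4.2.1]{ADM}).
\item Take $\chi\in C_c^\infty(T^*M\setminus0)$, $0\le\chi\le1$, with $h$-independent support and $\chi=1$ near the compact set containing all the $\supp a_\ell\circ\varkappa_\ell$. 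Define
\[
b:=\chi\,\sqrt{(m+\delta)^2-|\sigma_h^L(A)|^2}.
\]
The quantity under the root is at least $\delta^2>0$ on $\supp\chi$. Hence the root is a smooth function of a symbol, and the chain rule keeps $b$ in $S^{\rm comp}_{L,\rho,\rho'}$: a derivative along $L$ costs $h^{-\rho'}$ and any other derivative costs $h^{-\rho}$, exactly as before.
\item Set $B:=\Op_h^L(b)$ and $X:=\Op_h^L(\chi)\in\Psi_h^{\rm comp}$. Then $B^*B+A^*A-(m+\delta)^2X^*X$ has principal symbol $O(h^{1-\rho-\rho'})$. It therefore lies in $h^{1-\rho-\rho'}\Psi^{\rm comp}_{h,L,\rho,\rho'}$, and by part 1 of Lemma~\ref{obj:3.2} its $L^2$ norm is $O(h^{1-\rho-\rho'})$.
\item Since $\|X\|\le1+O(h)$ by the standard sharp bound for the classical calculus, we obtain
\[
\|Au\|^2\le\langle A^*Au,u\rangle+\|Bu\|^2\le(m+\delta)^2(1+O(h))\|u\|^2+O(h^{1-\rho-\rho'})\|u\|^2.
\]
Take square roots, let $h\to0$, then let $\delta\to0$; the condition $\rho+\rho'<1$ makes the errors $o(1)$.
\end{enumerate}

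The main obstacle is step 2 together with the composition step. One must check that composing with the square root, and with products of symbols of different regularity, stays within $S^{\rm comp}_{L,\rho,\rho'}$ and that the remainders genuinely gain $h^{1-\rho-\rho'}$. This rests on the \cite{DyJi} product and adjoint formulas and on the invariance of the quantization under the $L$-preserving symplectomorphisms $\varkappa_\ell$. Noncompactness of $M$ plays no role: every symbol has compact support, so only finitely many charts $U_\ell$ are involved.
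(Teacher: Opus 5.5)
Your proposal is correct and takes essentially the paper's route. The paper simply defers to the proof of \cite[Lemma 3.15]{DyZa} (see also \cite[Lemma A.5]{DyJi}), which is exactly this square-root argument with an $h$-independent cutoff $\chi$ and a fixed $\delta>0$, carried over to $S^{\rm comp}_{L,\rho,\rho'}$ via the product and adjoint rules of \cite{DyJi}; your observation that noncompactness of $M$ plays no role, because all symbols are compactly supported, is the same point the paper makes.
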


We now use Lemma~\ref{obj:3.2} to make the following definition, analogous to \cite[Definition 3.13]{DyZa}:

\begin{defi}\label{obj:3.4}
Let $A\in\Psi^{\rm comp}_{h,L,\rho,\rho'}(M)$. If $h_j\to0$ and $(z_j,\zeta_j)\in T^*M\setminus0$ are sequences, then we say that $A=O(h^\infty)$ microlocally along $(z_j,\zeta_j;h_j)$ if for each choice of $\varkappa,B,B'$ in Part 2 of Lemma~\ref{obj:3.2} and the corresponding symbol $a\in S^{\rm comp}_{L_0,\rho,\rho'}(T^*\mathbb R^{2n})$, the symbol $a\circ\varkappa$ is $O(h^\infty)$ along $(z_j,\zeta_j;h_j)$.
\end{defi}

The following is a direct consequence of Part 2 of Lemma~\ref{obj:3.2}, and is an analogue of Part 4 of \cite[Lemma 3.14]{DyZa}:

\begin{prop}\label{obj:3.5}
For each $a\in S^{\rm comp}_{L,\rho,\rho'}(T^*M\setminus0)$, if $h_j\to0$ and $(z_j,\zeta_j)\in T^*M\setminus0$ are sequences such that $a$ is $O(h^\infty)$ along $(z_j,\zeta_j;h_j)$, then $\Op_h^L(a)$ is $O(h^\infty)$ microlocally along $(z_j,\zeta_j;h_j)$.
\end{prop}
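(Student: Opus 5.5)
The plan is to unwind the definition of $\Op_h^L(a)$ and apply the last part of Lemma~\ref{obj:3.2} directly. Write
\[
A:=\Op_h^L(a)=\sum_{\ell=1}^N B_\ell'\Op_h(a_\ell)B_\ell,\qquad a_\ell=(\chi_\ell a)\circ\varkappa_\ell^{-1}.
\]
This is a representation of the form \eqref{eq:3.6}, with no $O(h^\infty)$ remainder. By Definition~\ref{obj:3.4}, we must fix an arbitrary admissible triple $\varkappa,B,B'$ as in Part 2 of Lemma~\ref{obj:3.2}, let $b$ be the symbol with $BAB'=\Op_h(b)+O(h^\infty)$, and show that $b\circ\varkappa$ is $O(h^\infty)$ along $(z_j,\zeta_j;h_j)$.

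The key check is that the hypothesis of the final statement of Lemma~\ref{obj:3.2} holds with the alternative ``$a_\ell\circ\varkappa_\ell$ is $O(h^\infty)$ along $(z_j,\zeta_j;h_j)$ for all $\ell$''. Here $a_\ell\circ\varkappa_\ell=\chi_\ell a$ on $U_\ell$, and it vanishes outside $U_\ell$ since $\chi_\ell\in C_c^\infty(U_\ell)$. Apply the Leibniz rule to $Z_1\cdots Z_m(\chi_\ell a)$. Each term is a product of $h$-independent bounded derivatives of $\chi_\ell$ with derivatives of $a$ evaluated at $(z_j,\zeta_j;h_j)$. The latter are $O(h_j^N)$ for every $N$ by assumption, because Definition~\ref{obj:3.1} permits arbitrary vector fields, including the compositions appearing in the Leibniz expansion. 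Hence $\chi_\ell a$ is $O(h^\infty)$ along the sequence for each $\ell$.

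A small technical point arises when $(z_j,\zeta_j)$ lies outside $U_\ell$ for some $j$. There $a_\ell\circ\varkappa_\ell$ should be read as $\chi_\ell a$ extended by zero, which is a global smooth function, so the estimate holds trivially at those points.

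Lemma~\ref{obj:3.2} then gives that $b\circ\varkappa$ is $O(h^\infty)$ along the sequence. The lemma is stated for $(z_j,\zeta_j)\in U$. If instead the points eventually lie outside $\varkappa^{-1}$ of a neighborhood of $\supp b$, the conclusion is trivial, because $\supp b\subset\varkappa(\overline U)$; at points of $\partial U$ one uses the compact support of $b$ inside the image. Since the triple $\varkappa,B,B'$ was arbitrary, this proves the proposition.

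The only genuine subtlety is this bookkeeping of domains. It is resolved by the support properties already recorded in Lemma~\ref{obj:3.2}, part 2, namely $\supp a\subset\varkappa(\overline U)$ and the compact support of $\chi_\ell$.
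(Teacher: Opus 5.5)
Your proposal is correct and is the paper's own argument. The paper records Proposition~\ref{obj:3.5} as a direct consequence of the final statement of Part 2 of Lemma~\ref{obj:3.2}, applied to the defining representation of $\Op_h^L(a)$, where $a_\ell\circ\varkappa_\ell=\chi_\ell a$ is $O(h^\infty)$ along $(z_j,\zeta_j;h_j)$ by the Leibniz rule exactly as you argue; your domain remarks (extending by zero outside $U_\ell$ and $U$) are loosely phrased bookkeeping that the paper leaves implicit and do not affect the argument.
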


Proposition~\ref{obj:3.5} allows a direct analogue of the version of an elliptic parametrix construction in \cite[Lemma 3.16]{DyZa}; the proof follows as in the proof of \cite[Lemma 3.16]{DyZa}, using Proposition~\ref{obj:3.5} in place of Part 4 of \cite[Lemma 3.14]{DyZa}.

\begin{prop}\label{obj:3.6}
Assume that $A,B\in\Psi^{\rm comp}_{h,L,\rho,\rho'}(M)$ and $B$ is elliptic on the microsupport of $A$ in the following sense: there exists $\varepsilon>0$ such that for any sequences $h_j\to0$ and $(z_j,\zeta_j)\in T^*M\setminus0$, if $|\sigma_h^L(B)(z_j,\zeta_j;h_j)|\leq\varepsilon$, then $A$ is $O(h^\infty)$ microlocally along $(z_j,\zeta_j;h_j)$. Then there exists
\[
Q\in\Psi^{\rm comp}_{h,L,\rho,\rho'}(M),\qquad A=QB+O(h^\infty)_{\mathcal D'\to C_c^\infty}.
\]
\end{prop}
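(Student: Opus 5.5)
We follow the proof of \cite[Lemma 3.16]{DyZa}, with Proposition~\ref{obj:3.5} replacing Part 4 of \cite[Lemma 3.14]{DyZa}. The plan is to build a symbolic inverse of $B$ where $B$ is elliptic, to show that the leftover part of $A$ vanishes to infinite order by a compactness argument, and then to remove the remaining lower order error with a Neumann series.

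\emph{Step 1: the approximate inverse.} Set $b:=\sigma_h^L(B)$. Fix $\chi\in C^\infty(\mathbb C)$ with $\chi=0$ on $\{|w|\le\varepsilon/3\}$ and $\chi=1$ on $\{|w|\ge 2\varepsilon/3\}$. Put
\[
q_0:=\chi(b)/b,
\]
where $\chi(b)/b$ is read as $0$ on $\{\chi(b)=0\}$, and cut $q_0$ off to a compact set containing $\WF_h(A)$, which is compact by Lemma~\ref{obj:3.2}. By the chain rule, composing a smooth function with $b\in S^{\rm comp}_{L,\rho,\rho'}$ stays in the class, so $q_0\in S^{\rm comp}_{L,\rho,\rho'}(T^*M\setminus0)$. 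Let $Q_0:=A\,\Op_h^L(q_0)$. The product formula of \cite[Appendix]{DyJi} gives
\[
Q_0B=A\,\Op_h^L(\chi(b))+O(h^{1-\rho-\rho'})_{\Psi^{\rm comp}_{h,L,\rho,\rho'}}.
\]

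\emph{Step 2: the part of $A$ off the elliptic set.} We claim $A\,\Op_h^L(1-\chi(b))=O(h^\infty)$. The symbol $1-\chi(b)$ vanishes unless $|b|\le 2\varepsilon/3<\varepsilon$. Suppose the claim fails. Then there are sequences $h_j\to0$ and points $(z_j,\zeta_j)$ in a fixed compact set, with $|b(z_j,\zeta_j;h_j)|\le\varepsilon$, at which the full symbol of $A\,\Op_h^L(1-\chi(b))$ is not $O(h^\infty)$ (to make this precise, work in local charts through Lemma~\ref{obj:3.2}). But along any such sequence the hypothesis says $A$ is $O(h^\infty)$ microlocally. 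The symbol of the product is a sum of products of derivatives of the two factors, so by Lemma~\ref{obj:3.2} and Proposition~\ref{obj:3.5} it is also $O(h^\infty)$ along the sequence, a contradiction. Hence
\[
A=Q_0B+hR,\qquad R\in h^{-\rho-\rho'}\Psi^{\rm comp}_{h,L,\rho,\rho'}(M).
\]
Moreover $R$ inherits the vanishing property of $A$, because it is built from compositions involving $A$.

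\emph{Step 3: iteration.} Apply Steps 1 and 2 to $R$ in place of $A$, and repeat. This gives $Q_k$ with errors of size $O(h^{k(1-\rho-\rho')})$, where $1-\rho-\rho'>0$ by \eqref{eq:3.5}. Borel-sum to obtain $Q\sim\sum_k Q_k$. Since every remainder is compactly microlocalized, the final error is $O(h^\infty)_{\mathcal D'\to C_c^\infty}$.

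\emph{Main obstacle.} The delicate point is the sequence characterization in Step 2. One must check that microlocal $O(h^\infty)$ vanishing along sequences, as in Definition~\ref{obj:3.4}, passes through the composition formula of the anisotropic calculus uniformly. This requires the locality of the calculus from Lemma~\ref{obj:3.2}(2), together with the diagonal argument of \cite[Lemma 3.16]{DyZa}, to turn pointwise-along-sequence vanishing into uniform $O(h^\infty)$ bounds.
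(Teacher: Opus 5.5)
Your proposal is correct and follows the same route as the paper, which simply defers to the proof of \cite[Lemma 3.16]{DyZa} with Proposition~\ref{obj:3.5} substituted for Part 4 of \cite[Lemma 3.14]{DyZa}. The ingredient you flag as the main obstacle, stability of microlocal $O(h^\infty)$ vanishing under composition, does not come from Proposition~\ref{obj:3.5}. It follows from Lemma~\ref{obj:3.2} together with the expansion \eqref{eq:3.8} in charts: the terms of the expansion are pointwise products of derivatives, and the remainder is globally $O(h^{(1-\rho-\rho')N})$. Proposition~\ref{obj:3.5} is needed only for the quantized cutoff $\Op_h^L(\tilde\chi(1-\chi(b)))$.
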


For the last result of this section, we will prove an Egorov's theorem for the $\Psi^{\rm comp}_{h,L,\rho,\rho'}(M)$ calculus that is a direct analogue of \cite[Lemma 3.17]{DyZa}. We begin by recalling from \cite[Section A.2]{DyJi} that for $a,b\in S^{\rm comp}_{L_0,\rho,\rho'}(T^*\mathbb R^{2n})$, there exists a symbol $a\# b\in S^{\rm comp}_{L_0,\rho,\rho'}(T^*\mathbb R^{2n})$ such that
\[
\Op_h(a)\Op_h(b)=\Op_h(a\# b)+O(h^\infty)_{L^2\to L^2}.
\]
Moreover, it can be shown via rescaling and applying \cite[Theorems 4.14 and 4.17]{Zw12} as in the proofs of \cite[Lemma 3.8]{DyZa} that for each $N$,
\begin{equation}
a\# b(y,\eta;h)=\sum_{j=0}^{N-1}\frac{(-ih)^j}{j!}(\partial_\eta\cdot\partial_{y'})^j\big(a(y,\eta;h)b(y',\eta';h)\big)\big|_{y'=y,\eta'=\eta}+O(h^{(1-\rho-\rho')N})_{S^{\rm comp}_{L_0,\rho,\rho'}}.\label{eq:3.8}
\end{equation}
We now state the following, similarly to \cite[Lemma 3.17]{DyZa}:

\begin{lem}\label{obj:3.7}
Fix $L\in\{L_u,L_s\}$, $P\in\Psi_h^{\rm comp}(M)$ with real-valued $h$-independent principal symbol $p:=\sigma_h(P)$ satisfying
\begin{equation}
L_{(z,\zeta)}\subset\ker dp(z,\zeta),\qquad (z,\zeta)\in T^*M\setminus0.\label{eq:3.9}
\end{equation}
Let $A\in\Psi^{\rm comp}_{h,L,\rho,\rho'}(M)$ and let $U$ be an open set containing $\WF_h(A)$. Take $T>0$ such that $e^{-tH_p}(\WF_h(A))\subset U$ for all $t\in[0,T]$ where $e^{tH_p}$ is the Hamiltonian flow of $p$. Then there exists a family of operators depending smoothly on $t$
\[
A_t\in\Psi^{\rm comp}_{h,L,\rho,\rho'}(M),\quad t\in[0,T],\qquad A_0=A+O(h^\infty)_{\mathcal D'\to C_c^\infty},
\]
such that
\begin{equation*}
\sigma_h^L(A_t)=\sigma_h^L(A)\circ e^{tH_p}+O(h^{1-\rho-\rho'})_{S^{\rm comp}_{L,\rho,\rho'}(T^*M\setminus0)}
\end{equation*}
and
\begin{equation}
    \label{eq:3.10}
    ih\partial_tA_t+[P,A_t]=O(h^\infty)_{\mathcal D'\to C_c^\infty}.
\end{equation}
Moreover, if $t$ is fixed and $h_j\to0$, $(z_j,\zeta_j)\in T^*M\setminus0$ are sequences such that $A$ is $O(h^\infty)$ microlocally along $(z_j,\zeta_j;h_j)$, then $A_t$ is $O(h^\infty)$ microlocally along $(e^{-tH_p}(z_j,\zeta_j);h_j)$.
\end{lem}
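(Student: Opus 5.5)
We follow the proof of \cite[Lemma 3.17]{DyZa}, replacing the $\Psi_{h,L,\rho}$ calculus there by the anisotropic calculus $\Psi^{\rm comp}_{h,L,\rho,\rho'}$ of \cite[Appendix]{DyJi}. We work first in a single model chart and then glue. Using a partition of unity we write $A$ in the form \eqref{eq:3.6}. Since $U\supset\WF_h(A)$ and $e^{-tH_p}(\WF_h(A))\subset U$ for $t\in[0,T]$, we may subdivide $[0,T]$ into short intervals of length $\delta$ and cover the relevant flowout compactly. It then suffices to treat a single term $B'\Op_h(a)B$ supported in one chart $\varkappa:U'\to T^*\mathbb R^{2n}$ mapping $L$ to $L_0$, with $t\in[0,\delta]$ small. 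The general case follows by iterating in $t$, using the group property of the Egorov conjugation, and summing over $\ell$.

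In the chart we conjugate: by Lemma~\ref{obj:3.2} and standard FIO calculus, $BPB'=\Op_h(\tilde p)+O(h^\infty)$ microlocally near $\varkappa(\WF_h(A))$, with $\tilde p=p\circ\varkappa^{-1}$ modulo $h S$. The hypothesis \eqref{eq:3.9} says $H_p$ preserves $L$; more precisely $dp$ annihilates the leaves, so the flow $e^{tH_p}$ maps leaves of $L$ to leaves (since $L$ is Lagrangian, $H_p\in L^{\perp_\omega}=L$, and then the flow preserves the foliation by integrability). Hence $a\circ e^{tH_{\tilde p}}$ still lies in $S^{\rm comp}_{L_0,\rho,\rho'}$, uniformly for $t\in[0,\delta]$. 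This is the key geometric input, because derivatives along $L_0$ remain derivatives along $L_0$ after pullback. We then construct the symbol $a_t\sim\sum_{j\ge0}a_t^{(j)}$ iteratively. We set $a_t^{(0)}=a\circ e^{tH_{\tilde p}}$. We use the composition expansion \eqref{eq:3.8} to compute $ih\partial_t\Op_h(a_t)+[\Op_h(\tilde p),\Op_h(a_t)]$. The $h^1$ terms cancel by the transport equation $\partial_t a^{(0)}_t=\{\tilde p,a^{(0)}_t\}$. Each remainder is $O(h^{(j+1)(1-\rho-\rho')})$ in the symbol class, and we kill it by solving the inhomogeneous transport equation $\partial_t a_t^{(j)}-H_{\tilde p}a_t^{(j)}=r_t^{(j)}$ with zero initial data via Duhamel. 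Borel summation then gives \eqref{eq:3.10} modulo $O(h^\infty)$. Setting $A_t:=B'\Op_h(a_t)B$ and gluing produces $A_0=A+O(h^\infty)$ and the principal symbol formula.

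The main obstacle is the order of the remainders. Because $\tilde p$ is a classical, $h$-independent symbol, the commutator terms in \eqref{eq:3.8} of order $j\ge2$ involve $\partial^\alpha_\eta\tilde p\,\partial^\alpha_y a$, which lose only $h^{-\rho|\alpha|}$ against $h^{|\alpha|}$. So the gain per step is at least $h^{1-\rho-\rho'}$, and in fact better. The care needed is that the Duhamel solutions stay in $S^{\rm comp}_{L_0,\rho,\rho'}$, and this again uses the fact that the flow preserves $L_0$. We must also check that the flow's differential does not mix the $h^{-\rho}$ transversal directions into the $h^{-\rho'}$ leafwise directions beyond constants on $[0,T]$. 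This holds because $d e^{tH_p}$ maps $L$ to $L$ and is bounded for bounded $t$.

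The final statement follows because every $a_t^{(j)}$ is built from $a$ and its derivatives composed with $e^{tH_{\tilde p}}$, integrated in time. Hence, if $a$ is $O(h^\infty)$ along $(z_j,\zeta_j;h_j)$, then each $a_t^{(j)}$ is $O(h^\infty)$ along $e^{-tH_p}(z_j,\zeta_j)$. For the integrated terms we apply this at every intermediate time, which is the same argument as in \cite{DyZa}. Combining with Lemma~\ref{obj:3.2} and Definition~\ref{obj:3.4} transfers this to the invariant statement.
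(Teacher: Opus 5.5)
\textbf{There is a genuine gap, at exactly the step you flag as the main obstacle.}

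By your own count, the term $h^{|\alpha|}\partial_\eta^\alpha\tilde p\,\partial_y^\alpha a$ with $|\alpha|=2$ has size $h^{2-2\rho}$. But $ih\partial_tA_t+[P,A_t]$ lives at order $h$. Once the transport equation cancels the order-$h$ terms, the first correction $a^{(1)}_t$ solves an equation of the form $ih(\partial_t-H_{\tilde p})a^{(1)}_t=r_t$. It therefore has size $h^{-1}\cdot h^{2-2\rho}=h^{1-2\rho}$, not $h^{1-\rho-\rho'}$; note $1-2\rho<1-\rho-\rho'$ because $\rho'\le\rho/2$. For $\rho\ge\frac12$ this is no gain at all, and the iteration does not close. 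The lemma, however, is stated for every $\rho<1$ and is used in Section~\ref{sec:reduction-fup} with $\rho\approx\frac23$. The only thing you extract from \eqref{eq:3.9} is that the flow preserves $S^{\rm comp}_{L_0,\rho,\rho'}$, and that does not control these terms.

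\textbf{The missing idea:} these terms are not small; they vanish identically. You already observe that $H_p\in L$. In the chart this says $H_{\tilde p}\in L_0$, which in your convention is spanned by the $\partial_{\eta_j}$. Hence $\partial_\eta\tilde p\equiv0$, so the principal part of the conjugated operator is locally a function $f(y)$, and $\Op_h(f)$ is a multiplication operator.
\begin{itemize}
\item $f\#a=fa$ exactly.
\item $a\#f$ contains only $h^{|\alpha|}\partial_\eta^\alpha a\,\partial_y^\alpha f=O(h^{|\alpha|(1-\rho')})$, which differentiates $a$ only along the leaves. This is precisely \eqref{eq:3.12}.
\item The lower-order part $h\tilde p_1$ of the conjugated symbol is a general classical symbol, but its commutator with $\Op_h(a)$ is only $O(h^{2-\rho})$.
\end{itemize}
Together these give the invariant commutator formula \eqref{eq:3.11}.

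The paper then runs the iteration of \cite{DyZa} globally in the $\Op_h^L$ calculus, using a pseudodifferential partition of unity. This also avoids your chart-by-chart time subdivision and your appeal to a ``group property''. For $O(h^\infty)$-approximate solutions, that property would itself need a uniqueness argument for the Heisenberg equation, and it requires re-representing each term with $B,B'$ microlocally inverse near the flowout. With the commutator formula in hand, the rest of your outline goes through as in \cite{DyZa}: Duhamel corrections, Borel summation, and propagation of $O(h^\infty)$ vanishing to $e^{-tH_p}(z_j,\zeta_j)$.
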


\begin{proof}
We follow the proof of \cite[Lemma 3.17]{DyZa}, using our modifications obtained above. By \eqref{eq:3.9}, the Hamiltonian vector field $H_p$ is in $L$, and thus $e^{tH_p}$ preserves $L$. It follows that for any $a\in S^{\rm comp}_{L,\rho,\rho'}(T^*M\setminus0)$ both $H_pa$ and $a\circ e^{tH_p}$ are in $S^{\rm comp}_{L,\rho,\rho'}(T^*M\setminus0)$ as well. We claim that for any $\widetilde A\in\Psi^{\rm comp}_{h,L,\rho,\rho'}(M)$,
\begin{equation}
[P,\widetilde A]=\frac{h}{i}\Op_h^L\big(H_p\sigma_h^L(\widetilde A)\big)+O(h^{2-\rho-\rho'})_{\Psi^{\rm comp}_{h,L,\rho,\rho'}(M)}.\label{eq:3.11}
\end{equation}
To prove \eqref{eq:3.11}, note that it follows from \eqref{eq:3.8} that if $f\in C_c^\infty(\mathbb R^{2n})$ and $a\in S^{\rm comp}_{L_0,\rho,\rho'}(T^*\mathbb R^{2n})$, then
\begin{equation}
f(y)\# a-a\# f(y)=\frac{h}{i}\{f(y),a\}+O(h^2)_{S^{\rm comp}_{L_0,\rho,\rho'}(T^*\mathbb R^{2n})}.\label{eq:3.12}
\end{equation}
As in the proof of \cite[Lemma 3.17]{DyZa}, applying \eqref{eq:3.12}, a pseudodifferential partition of unity, and Part 2 of Lemma~\ref{obj:3.2}, we obtain \eqref{eq:3.11}.

Having proved \eqref{eq:3.11}, the remainder of the proof follows as in the proof of \cite[Lemma 3.17]{DyZa}.
\end{proof}

We remark that another version of Egorov's theorem for the symbol class $S^{\rm comp}_{L,\rho,\rho'}(T^*M\setminus0)$ was proved in \cite[Lemma A.7]{DyJi}.

\subsection{Fine microlocalization identities for the modified Laplacian $\mathcal P_h(\omega)$.}\label{subsec:fine-microlocalization}
Our work in the prior section makes it possible to obtain operator identities for $\mathcal P_h(\omega)$ similar to the ones obtained by \cite{Dy19a} for the extended operator of Vasy in the real hyperbolic setting. Lemmas~\ref{obj:3.8}--\ref{obj:3.11} are direct analogues of \cite[Lemmas 2.2--2.5]{Dy19a} and are obtained with ease using our analogues of \cite[Lemmas 3.15--3.17]{DyZa} obtained in Section~\ref{subsec:anisotropic-calculus}. However, the main results of this section, namely Lemmas~\ref{obj:3.12} and \ref{obj:3.13}, are significant modifications of the refined microlocal identities of \cite[Lemmas 2.6 and 2.7]{Dy19a}. These lemmas give a sharper localization to the trapped set than the corresponding results of \cite{Dy19a}, and are the crucial tool allowing an adaptation of the approach of \cite{ADM} in the next section.

The first two results are similar to the statements of \cite[Lemmas 2.2 and 2.3]{Dy19a}, but rather than the extended operator of Vasy, we are using the operator of Quan \cite{quan2021microlocal} recalled in Section~\ref{subsec:modified-spectral-family}. These lemmas (Lemmas~\ref{obj:3.8} and \ref{obj:3.9} below) allow one to treat the infinity of $M$ as a black box and to focus our analysis near the trapped set. As with \cite[Lemma 2.2]{Dy19a}, our Lemma~\ref{obj:3.8} below says that solutions to $\mathcal P_h(\omega)u=0$ (called resonant states), when restricted to $\{r\leq r_0\}$, are microlocally negligible outside $h$-independent neighborhoods of $\Gamma_+\cap\{|\zeta|_g=1\}$. Our notation is from Section~\ref{subsec:modified-spectral-family}; in particular $\nu_0>0$ and hence $\Omega$ are fixed, and then $\mathcal X,\mathcal Y$ are the spaces defined in \eqref{eq:3.3}.

\begin{lem}\label{obj:3.8}
Assume that $A_1\in\Psi_h^0(M_{\rm ext})$, $\WF_h(A_1)\subset\{r\leq r_0\}\subset\overline{T}^*M$, and
\begin{equation}
\WF_h(A_1)\cap\Gamma_+\cap\{|\zeta|_g=1\}=\varnothing.\label{eq:3.13}
\end{equation}
Then we have on $\mathcal X$,
\begin{equation}
A_1=Z_1(\omega)\mathcal P_h(\omega)+O(h^\infty)_{\mathcal D'\to\mathcal X},\label{eq:3.14}
\end{equation}
where $Z_1(\omega)$ is holomorphic in $\omega\in\Omega$ and $\|Z_1(\omega)\|_{\mathcal Y\to\mathcal X}\leq Ch^{-1}$.
\end{lem}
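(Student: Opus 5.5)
We follow the proof of \cite[Lemma 2.2]{Dy19a}, which in turn rests on \cite[Lemma 4.3]{DyZa}. The idea is to construct $Z_1(\omega)$ as a microlocal inverse of $\mathcal P_h(\omega)$ composed with $A_1$: elliptic inversion off the characteristic set $\{|\zeta|_g=1\}$, and propagation of singularities plus the global Fredholm inverse on it. Concretely, we split $A_1=A_1^{\rm ell}+A_1^{\rm char}$ by a pseudodifferential partition of unity. $\WF_h(A_1^{\rm ell})$ lies in the elliptic set of $\mathcal P_h(\omega)$. By \eqref{eq:3.2} this is $\{|\zeta|_g\neq 1\}$ within $\{r\le r_0\}$, since $\omega\in\Omega$ satisfies $\omega^2=1+O(h)$. $\WF_h(A_1^{\rm char})$ is a small neighborhood of $\WF_h(A_1)\cap\{|\zeta|_g=1\}$ that still avoids $\Gamma_+$.

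\emph{Elliptic part.} The standard elliptic parametrix gives $Q(\omega)\in\Psi_h^{-2}$, holomorphic in $\omega$, with $A_1^{\rm ell}=Q(\omega)\mathcal P_h(\omega)+O(h^\infty)$. Since $\WF_h(Q)$ is compact in $\{r\le r_0\}$, where the variable order $\phi$ is irrelevant up to smoothing, $Q$ maps $\mathcal Y\to\mathcal X$ with norm $O(1)$.

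\emph{Characteristic part.} Take $(z,\zeta)\in\WF_h(A_1^{\rm char})$. Since $(z,\zeta)\notin\Gamma_+$, the backward flow $\varphi^{-t}(z,\zeta)$ escapes to $r\to\infty$. It then leaves $\{r\le r_0\}$ and, in the extended operator of \cite{quan2021microlocal}, reaches in finite time either the region where the complex absorbing potential $Q_h$ is elliptic or the radial source set at the boundary. We check the sign convention to see which of these is met in the backward direction; it is the direction in which \cite[Section 2.4]{quan2021microlocal} controls $u$ by $\mathcal P_h u$. This is the standard nontrapping estimate. Positive commutator propagation \cite[Theorem E.47]{DyZw}, together with the radial source estimate valid for the threshold $s_+>\frac12+\nu_0$ and the absorption estimate, then gives
\[
\|A_1^{\rm char}u\|_{\mathcal X}\le Ch^{-1}\|\mathcal P_h(\omega)u\|_{\mathcal Y}+O(h^\infty)\|u\|_{H_h^{-N}}.
\]
To turn this estimate into an operator identity, we do not invert globally. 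Instead, following \cite[Lemma 4.3]{DyZa}, we define $Z_1^{\rm char}(\omega):=A_1^{\rm char}\mathcal R(\omega)$, where $\mathcal R(\omega)$ is a microlocal inverse of $\mathcal P_h(\omega)$ along the relevant trajectories. We could build $\mathcal R(\omega)$ from Quan's Fredholm theory applied to a modified operator $\mathcal P_h(\omega)-iQ'$, with $Q'\in\Psi_h^{\rm comp}$ a complex absorber placed on a neighborhood of $K$ disjoint from the backward trajectories of $\WF_h(A_1^{\rm char})$. This modified operator has no trapping, so it is invertible for $\omega\in\Omega$ with $\|\mathcal R\|_{\mathcal Y\to\mathcal X}\le Ch^{-1}$, holomorphically in $\omega$. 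We then get $A_1^{\rm char}=A_1^{\rm char}\mathcal R(\omega)\mathcal P_h(\omega)+iA_1^{\rm char}\mathcal R(\omega)Q'$. The last term is $O(h^\infty)$ by propagation of singularities for $\mathcal R$: outgoing propagation from $\WF_h(Q')$ never reaches $\WF_h(A_1^{\rm char})$, because $\WF_h(A_1)\cap\Gamma_+=\varnothing$.

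The main obstacle is exactly this last step. Propagation for $\mathcal R(\omega)$ must be justified in the variable order spaces $H_h^\phi$ of \cite{quan2021microlocal}, with the ultrahyperbolic (connected) characteristic set in $M_{\rm ext}\setminus M$. We need that the microlocal estimates in \cite[Sections 2.4--2.8]{quan2021microlocal} give semiclassical wavefront control, and not only Fredholm bounds. If uniform invertibility of $\mathcal P_h(\omega)-iQ'$ on $\Omega$ is delicate, we would fall back on the nontrapping resolvent estimate at real $\omega$ and extend it to $\Im\omega\ge-\nu_0h$ using the threshold condition on $s_+$. Finally we set $Z_1:=Q+Z_1^{\rm char}$ and read off holomorphy and the bound $Ch^{-1}$.
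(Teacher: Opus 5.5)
Your proposal is correct and is essentially the paper's argument. The paper sets $Z_1(\omega)=A_1(\mathcal P_h(\omega)-iQ_1)^{-1}$, where $Q_1\in\Psi_h^{\rm comp}(M)$ is an absorber elliptic on $K\cap\{|\zeta|_g=1\}$ whose wavefront set avoids $\bigcup_{t\ge0}\varphi^{-t}(\WF_h(A_1)\cap\{|\zeta|_g=1\})$; this is your $\mathcal R(\omega)$ construction, applied to all of $A_1$ at once, so the elliptic/characteristic splitting is unnecessary.

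The invertibility, the $Ch^{-1}$ bound, and $A_1(\mathcal P_h(\omega)-iQ_1)^{-1}Q_1=O(h^\infty)_{\mathcal D'\to\mathcal X}$ (using that $Q_1$ is bounded $H_h^{-N}\to\mathcal Y$) are all quoted directly from Quan's semiclassically outgoing property \cite[Theorem 2.5]{quan2021microlocal}, which removes the obstacle you flag. One minor correction: the sign should read $A_1^{\rm char}=A_1^{\rm char}\mathcal R(\omega)\mathcal P_h(\omega)-iA_1^{\rm char}\mathcal R(\omega)Q'$.
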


\begin{proof}
We follow the proof of \cite[Lemma 2.2]{Dy19a}, and as in that proof we may use \eqref{eq:3.13} to find a complex absorbing potential
\[
Q_1\in\Psi_h^{\rm comp}(M),\qquad \sigma_h(Q_1)\geq0,
\]
\[
K\cap\{|\zeta|_g=1\}\subset\operatorname{ell}_h(Q_1),\qquad \WF_h(Q_1)\subset\{r\leq r_0\},
\]
such that
\[
\WF_h(Q_1)\cap\bigcup_{t\geq0}\varphi^{-t}\big(\WF_h(A_1)\cap\{|\zeta|_g=1\}\big)=\varnothing.
\]
Now, if $Q_h\in\Psi_h^2(M_{\rm ext})$ is the complex absorbing potential that is part of $\mathcal P_h(\omega)$ (see Section~\ref{subsec:modified-spectral-family}), then $Q_1':=Q_h+Q_1$ satisfies the properties of paragraph two of \cite[Section 2.4.1]{quan2021microlocal}. Thus, $\mathcal P_h(\omega)-iQ_1:\mathcal X\to\mathcal Y$ is semiclassically outgoing in the sense of \cite[Section 2.4.1]{quan2021microlocal} and we may apply \cite[Theorem 2.5]{quan2021microlocal} to see that 
\[\mathcal P_h(\omega)-iQ_1:\mathcal X\to\mathcal Y\]
is invertible for small $h$ and its inverse satisfies the high-energy bound
\[
\|(\mathcal P_h(\omega)-iQ_1)^{-1}\|_{\mathcal Y\to\mathcal X}\leq Ch^{-1}.
\]
Moreover, the semiclassical outgoing property and the properties of $A_1,Q_1$ imply that
\begin{equation}
A_1(\mathcal P_h(\omega)-iQ_1)^{-1}Q_1=O(h^\infty)_{\mathcal D'\to\mathcal X}.\label{eq:3.15}
\end{equation}
Here we use that $Q_1$ is uniformly bounded in $h$ as an operator $H_h^{-N}(M_{\rm ext})\to\mathcal Y$ for all $N$. Setting
\[
Z_1(\omega):=A_1(\mathcal P_h(\omega)-iQ_1)^{-1},
\]
the statement follows, as
\[
A_1-Z_1(\omega)\mathcal P_h(\omega)=-iA_1(\mathcal P_h(\omega)-iQ_1)^{-1}Q_1.
\]
\end{proof}

\begin{rem}
Note that the remainder in \eqref{eq:3.14} is $\mathcal D'\to\mathcal X$ rather than the $\mathcal D'\to C^\infty$ bound obtained in the version of \cite[Lemma 2.2]{Dy19a}. This is related to the fact that the variable order Sobolev spaces satisfy $H_h^{s_+}(M_{\rm ext})\subset H_h^\phi(M_{\rm ext})\subset H_h^{s_-}(M_{\rm ext})$, where $s_+$ may be taken arbitrarily large but $0\leq s_-<\frac14$, and thus we cannot obtain the improved regularity bound in \eqref{eq:3.15} as was done in the proof of \cite[Lemma 2.2]{Dy19a}.
\end{rem}

Our next result is an analogue of \cite[Lemma 2.3]{Dy19a}, and implies that resonant states are completely determined by their microlocal behavior in any $h$-independent neighborhood of $K\cap\{|\zeta|_g=1\}$.

\begin{lem}\label{obj:3.9}
Assume that $A_2\in\Psi_h^0(M_{\rm ext})$ is elliptic on $K\cap\{|\zeta|_g=1\}$. Then on $\mathcal X$,
\begin{equation}
I=Z_2(\omega)\mathcal P_h(\omega)+J_2(\omega)A_2+O(h^\infty)_{\mathcal D'\to\mathcal X},\label{eq:3.16}
\end{equation}
where $Z_2(\omega),J_2(\omega)$ are holomorphic in $\omega\in\Omega$ and satisfy $\|Z_2(\omega)\|_{\mathcal Y\to\mathcal X}\leq Ch^{-1}$, $\|J_2(\omega)\|_{H_h^{-N}\to\mathcal X}\leq C_N$ for all $N$.
\end{lem}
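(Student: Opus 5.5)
We follow the proof of \cite[Lemma 2.3]{Dy19a}, replacing Vasy's operator by the operator $\mathcal P_h(\omega)$ of Quan and the standard Sobolev spaces by the spaces $\mathcal X,\mathcal Y$ of \eqref{eq:3.3}. The idea is to use a complex absorbing potential placed only near the trapped set, where $A_2$ is elliptic, so that the modified operator becomes invertible with a $Ch^{-1}$ bound. The absorbed piece is then rewritten through $A_2$ by an elliptic parametrix.

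First, since $A_2$ is elliptic on the compact set $K\cap\{|\zeta|_g=1\}$, choose $Q_2\in\Psi_h^{\rm comp}(M)$ with the following properties:
\[
\sigma_h(Q_2)\geq0,\qquad K\cap\{|\zeta|_g=1\}\subset\operatorname{ell}_h(Q_2),\qquad \WF_h(Q_2)\subset\operatorname{ell}_h(A_2)\cap\{r\leq r_0\}.
\]
As in the proof of Lemma~\ref{obj:3.8}, the operator $Q_h+Q_2$ satisfies the hypotheses of paragraph two of \cite[Section 2.4.1]{quan2021microlocal}. Indeed, every trajectory in $\{|\zeta|_g=1\}\cap\{r\leq r_0\}$ either escapes to the region where $Q_h$ is elliptic, or in both time directions lies in $\Gamma_+\cap\Gamma_-=K$, where $Q_2$ is elliptic. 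Hence $\mathcal P_h(\omega)-iQ_2:\mathcal X\to\mathcal Y$ is semiclassically outgoing. By \cite[Theorem 2.5]{quan2021microlocal} it is invertible for small $h$, holomorphically in $\omega\in\Omega$, with
\[
\|(\mathcal P_h(\omega)-iQ_2)^{-1}\|_{\mathcal Y\to\mathcal X}\leq Ch^{-1}.
\]
Set $Z_2(\omega):=(\mathcal P_h(\omega)-iQ_2)^{-1}$. Then on $\mathcal X$ we have the exact identity
\[
I=Z_2(\omega)\mathcal P_h(\omega)-iZ_2(\omega)Q_2 .
\]

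Next, by the elliptic parametrix in the standard calculus, $\WF_h(Q_2)\subset\operatorname{ell}_h(A_2)$ gives $Q_2=\widetilde Q A_2+O(h^\infty)_{\mathcal D'\to C_c^\infty}$ for some $\widetilde Q\in\Psi_h^{\rm comp}(M)$. Define $J_2(\omega):=-iZ_2(\omega)\widetilde Q$. The remainder $-iZ_2(\omega)\,O(h^\infty)_{\mathcal D'\to C_c^\infty}$ is $O(h^\infty)_{\mathcal D'\to\mathcal X}$, since $C_c^\infty\subset\mathcal Y$ with bounds and the $h^{-1}$ loss is absorbed. This yields \eqref{eq:3.16}, with $Z_2$ holomorphic in $\omega$ and $J_2$ holomorphic because $\widetilde Q$ is independent of $\omega$.

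The main point to check is the $H_h^{-N}\to\mathcal X$ bound on $J_2(\omega)$ with no $h^{-1}$ loss. The bound $\widetilde Q:H_h^{-N}\to\mathcal Y$ holds uniformly, since $\widetilde Q$ is compactly microlocalized. The difficulty is that $Z_2$ loses $h^{-1}$ in general. We remove this loss as in \cite[Lemma 2.3]{Dy19a}: because $\widetilde Q$ is microlocalized where $Q_2$ is elliptic, the outgoing resolvent applied to such data is bounded uniformly. One route is to rewrite $Z_2\widetilde Q$ via $Q_2$ itself. The standard estimate
\[
\|Q_2^{1/2}u\|\lesssim\|(\mathcal P_h-iQ_2)u\|^{1/2}\|u\|^{1/2}
\]
from the imaginary part, combined with the elliptic estimate on $\operatorname{ell}_h(Q_2)$ and propagation away from it, gives $\|Z_2(\omega)\widetilde Q f\|_{\mathcal X}\leq C\|f\|_{H_h^{-N}}$. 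In this positive-commutator and imaginary-part argument one must also verify compatibility with the variable order spaces. This is harmless because $Q_2$ is supported in the compact region $\{r\leq r_0\}$, where the order function plays no role.
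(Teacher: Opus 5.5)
Everything up to the definition of $J_2$ is fine. The absorber $Q_2$ with $\WF_h(Q_2)\subset\operatorname{ell}_h(A_2)\cap\{r\le r_0\}$, invertibility of $\mathcal P_h(\omega)-iQ_2$ via \cite[Theorem 2.5]{quan2021microlocal} as in Lemma~\ref{obj:3.8}, the exact identity $I=Z_2\mathcal P_h(\omega)-iZ_2Q_2$, and the parametrix $Q_2=\widetilde QA_2+O(h^\infty)$ together give \eqref{eq:3.16} with holomorphic $Z_2,J_2$ and $\|Z_2\|_{\mathcal Y\to\mathcal X}\le Ch^{-1}$.

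The gap is the claimed uniform bound $\|J_2\|_{H_h^{-N}\to\mathcal X}\le C_N$ for $J_2=-iZ_2\widetilde Q$. A priori you only get $Ch^{-1}$. That loss would be fatal downstream: it enters $J_-^0$ in Lemma~\ref{obj:3.13} and then Proposition~\ref{obj:4.1}, where the FUP gain $h^{\beta_0}$ cannot absorb an $h^{-1}$. Your sketch does not remove the loss. For $v=Z_2\widetilde Qf$, the imaginary-part inequality together with $\|v\|\le Ch^{-1}\|f\|$ only gives $\|Q_2^{1/2}v\|\lesssim h^{-1/2}\|f\|$. Even that needs an extra $Ch\|v\|^2$ on the right, since $\mathcal P_h(\omega)$ is not self-adjoint: $-\Im\omega^2\approx2\nu_0h>0$ when $\Im\omega=-\nu_0h$. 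Propagating outward from $\operatorname{ell}_h(Q_2)$ along $\Gamma_+\setminus K$ then costs $h^{-1}\|\widetilde Qf\|$. This is because the source $\widetilde Qf$ lives exactly on the stretch where outgoing trajectories leave $\supp\sigma_h(Q_2)$.

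In fact, with an $h$-independent absorber the bound is false. Test on WKB states along an outgoing trajectory. The equation $(\mathcal P_h(\omega)-iQ_2)v=\widetilde Qf$ then reduces, up to harmless factors and an $O(1)$ phase from $(\omega-1)/h$, to $-ih\partial_tv-\tfrac i2qv=\tfrac12qg$, with $q=\sigma_h(Q_2)$ and $g=f/\sigma_h(A_2)$. After the exit time, $v=i\int g\,d\mu_h$, where $d\mu_h=\partial_s\bigl(e^{-\frac1{2h}\int_s^{t_{\rm exit}}q}\bigr)\,ds$. This is a positive measure of mass $\approx1$, concentrated on a final stretch of length $\delta_h\to0$ on which $\int q\sim h$; for example, $\delta_h\sim h^{1/(m+1)}$ if $q$ vanishes to order $m$ at the exit. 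Choose $g$ of unit norm concentrated on that stretch, oscillating at scale $\delta_h\gg h$ so that $\|g\|_{H_h^{-N}}\sim\|g\|_{L^2}$. This gives $|v|\gtrsim\delta_h^{-1/2}$ along the rest of the outgoing trajectory, so $\|Z_2\widetilde Q\|_{H_h^{-N}\to\mathcal X}\to\infty$.

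The $A_2$-term therefore has to be produced by a mechanism that costs no power of $h$. Two possibilities:
\begin{enumerate}
\item A fixed-time propagation argument in the spirit of Lemma~\ref{obj:3.10}, whose $J$-factor is bounded by $e^{T\nu_0}+\varepsilon_1$.
\item An absorber of size $h$, $Q_2=h\widehat Q_2$, with $\sigma_h(\widehat Q_2)$ large compared to $\nu_0$ near $K\cap\{|\zeta|_g=1\}$. Then $J_2=-ihZ_2\widetilde Q$ is $O(1)$, but only once you prove the damped bound $\|(\mathcal P_h(\omega)-ih\widehat Q_2)^{-1}\|_{\mathcal Y\to\mathcal X}\le Ch^{-1}$, which is not contained in \cite[Theorem 2.5]{quan2021microlocal}.
\end{enumerate}
Separately, your remark that the variable order plays no role is too quick: $Z_2\widetilde Qf$ propagates along $\Gamma_+$ into the region where the order function matters.
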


The proof is identical to the proof of \cite[Lemma 2.3]{Dy19a}, with the changes in the remainder bound in \eqref{eq:3.16}, as well as in the bound on $J_2(\omega)$, coming from the issue in the Remark after Lemma~\ref{obj:3.8}. As mentioned above, Lemmas~\ref{obj:3.8} and \ref{obj:3.9} allow us to treat infinity as a black box and focus on the analysis near the trapped set. In particular, using also \eqref{eq:2.9} and \eqref{eq:2.10}, the hypotheses of \cite{DyGu} are satisfied in our setting. This observation is applied in the proofs of Lemmas~\ref{obj:3.12} and \ref{obj:3.13} below to obtain further properties of the flow (see \eqref{eq:3.19} and \eqref{eq:3.20}).

We now give an approximate inverse statement for the class $\Psi^{\rm comp}_{h,L,\rho,\rho'}(M)$, $L\in\{L_u,L_s\}$ reviewed in Section~\ref{subsec:anisotropic-calculus}. It is an analogue of \cite[Lemma 2.4]{Dy19a}, which gives a similar statement for the class $\Psi^{\rm comp}_{h,L,\rho}(M)$, real hyperbolic manifolds, and the extended operator of Vasy.

\begin{lem}\label{obj:3.10}
Let $a,b\in S^{\rm comp}_{L,\rho,\rho'}(T^*M\setminus0)$ where $L\in\{L_u,L_s\}$, $\rho,\rho'$ satisfy \eqref{eq:3.5}, and fix $T>0$. Assume that $|a|\leq1$ everywhere and
\begin{equation}
\varphi^{-T}(\supp a)\subset\{b=1\},\qquad \varphi^{-t}(\supp a)\subset W_0,\quad t\in[0,T],\label{eq:3.17}
\end{equation}
where $W_0:=\{r\leq r_0\}\cap\{\frac12\leq|\zeta|_g\leq2\}\subset T^*M\setminus0$. Then
\[
\Op_h^L(a)=Z(\omega)\mathcal P_h(\omega)+J(\omega)\Op_h^L(b)+O(h^\infty)_{\mathcal D'\to C^\infty}
\]
where $Z(\omega),J(\omega):\mathcal D'(M_{\rm ext})\to C^\infty(M_{\rm ext})$ are holomorphic in $\omega\in\Omega$ and for all $N$,
\[
\|Z(\omega)\|_{H_h^{-N}\to H_h^N}\leq C_Nh^{-1},\qquad \|J(\omega)\|_{H_h^{-N}\to H_h^N}\leq C_N,
\]
and for each $\varepsilon_1>0$ and $h$ small enough depending on $\varepsilon_1$,
\[
\|J(\omega)\|_{L^2\to L^2}\leq\exp(-T\Im\omega/h)+\varepsilon_1.
\]
\end{lem}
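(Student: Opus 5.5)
We follow the proof of \cite[Lemma 2.4]{Dy19a}, replacing the $\Psi^{\rm comp}_{h,L,\rho}$ calculus of \cite{DyZa} by the anisotropic calculus of Section~\ref{subsec:anisotropic-calculus}, and Vasy's operator by $\mathcal P_h(\omega)$. The idea is a Duhamel/parametrix argument for the propagator of a first order operator whose principal symbol is $p=|\zeta|_g$ near $W_0$.

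\emph{Step 1: choice of $P$.} Take $P\in\Psi_h^{\rm comp}(M)$ with real $h$-independent principal symbol equal to $p$ on a neighbourhood of $\bigcup_{t\in[0,T]}\varphi^{-t}(\supp a)\subset W_0$, supported in $\{r\le r_0\}$. Then $H_p$ generates $\varphi^t$ there, and since $X\in L_u\cap L_s$ is isotropic and $L$ is Lagrangian, we get $L\subset\ker dp$. This is \eqref{eq:3.9}, so Lemma~\ref{obj:3.7} applies.

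Relate $P$ to $\mathcal P_h(\omega)$ using \eqref{eq:3.2}. Near $W_0$ we have
\[
\mathcal P_h(\omega)=(P-\omega)(P+\omega)+O(h^\infty)
\]
microlocally, after adjusting $P$ by lower order terms (for instance, take $P$ to be a functional calculus square root $\sqrt{-h^2\Delta_g-h^2n^2/4}$ cut off microlocally). Since $P+\omega$ is elliptic near $\{|\zeta|_g\sim1\}$, Proposition~\ref{obj:3.6} (or the standard elliptic parametrix) reduces the problem to showing
\[
\Op_h^L(a)=\widetilde Z(\omega)(P-\omega)+J(\omega)\Op_h^L(b)+O(h^\infty).
\]

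\emph{Step 2: Egorov.} Apply Lemma~\ref{obj:3.7} with $A=\Op_h^L(a)$ to get $A_t$, $t\in[0,T]$, with $\sigma_h^L(A_t)=a\circ\varphi^t$ and $ih\partial_tA_t+[P,A_t]=O(h^\infty)$. This gives
\[
\partial_t\big(e^{-it\omega/h}A_t\big)=\tfrac{i}{h}e^{-it\omega/h}A_t(P-\omega)+O(h^\infty).
\]
Integrating over $[0,T]$ yields
\[
A_0=e^{-iT\omega/h}A_T-\tfrac{i}{h}\int_0^Te^{-it\omega/h}A_t\,dt\,(P-\omega)+O(h^\infty).
\]
The prefactor $h^{-1}$ gives $\|Z\|\le C h^{-1}$; the integral is uniformly bounded since $\Im\omega\ge-\nu_0h$. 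All terms are in $\Psi^{\rm comp}$, hence smoothing $H_h^{-N}\to H_h^N$ with compactly supported kernels, and they are holomorphic in $\omega$.

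\emph{Step 3: the $J$ term.} By \eqref{eq:3.17}, $\supp(a\circ\varphi^T)\subset\{b=1\}$, so $\sigma_h^L(A_T)(1-b)=0$ up to $O(h^{1-\rho-\rho'})$. More precisely, $A_T(1-\Op_h^L(b))$ is $O(h^\infty)$ microlocally along every sequence, using the last statements of Lemma~\ref{obj:3.7} and Proposition~\ref{obj:3.5}. Hence $A_T=A_T\Op_h^L(b)+O(h^\infty)$, and we set $J(\omega)=e^{-iT\omega/h}A_T$. Then
\[
|e^{-iT\omega/h}|=e^{T\Im\omega/h},
\]
and Proposition~\ref{obj:3.3} gives $\|A_T\|\le\sup|a\circ\varphi^T|+o(1)\le 1+o(1)$.

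The sign of the exponent needs care: the stated bound $\exp(-T\Im\omega/h)$ suggests propagating backward, i.e.\ using $A_{-t}$ with the flow $\varphi^{-t}$, which matches \eqref{eq:3.17}. We will set up the Egorov family in that direction (with $P$ replaced by $-P$ if needed), so that the factor is $e^{-T\Im\omega/h}$, which is bounded since $\Im\omega\ge -\nu_0 h$. The $o(1)$ is absorbed into $\varepsilon_1$.

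\emph{Main obstacle.} We expect the hardest part to be the microlocal factorization of $\mathcal P_h(\omega)$ in the anisotropic class. Composing $\Psi^{\rm comp}_{h,L,\rho,\rho'}$ operators with the standard $\Psi_h$ parametrices of $P+\omega$ must stay in the class with $O(h^\infty)$ errors. We would handle this by inserting a standard cutoff $\chi\in\Psi_h^{\rm comp}$ equal to $1$ microlocally near $\bigcup_t\varphi^{-t}\supp a$ and using Lemma~\ref{obj:3.2} to control the remainders.
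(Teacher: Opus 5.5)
There is a genuine gap in Step 2, and it is the core of the argument. From \eqref{eq:3.10} you have $\partial_tA_t=\frac ih[P,A_t]+O(h^\infty)$, hence
\[
\partial_t\big(e^{-it\omega/h}A_t\big)=\tfrac ih\, e^{-it\omega/h}\big(PA_t-A_t(P+\omega)\big)+O(h^\infty).
\]
This is not a right multiple of $P-\omega$, so your integrated identity is false. For example, if $A$ commutes with $P$, then $A_t=A$ and your identity reduces to $\omega^{-1}(e^{-iT\omega/h}-1)AP=0$.

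The Egorov family alone can never produce a factor $(P-\omega)$ on the right. What is missing is the propagator $U(t)=e^{-itP/h}$. By \eqref{eq:3.10}, $\partial_t\big(U(t)A_tU(-t)\big)=O(h^\infty)$, so $\Op_h^L(a)U(t)=U(t)A_t+O(h^\infty)$. Differentiating $e^{it\omega/h}\Op_h^L(a)U(t)$ in $t$ then gives
\[
\Op_h^L(a)=e^{iT\omega/h}U(T)A_T+\frac ih\int_0^Te^{it\omega/h}U(t)A_t\,dt\,(P-\omega)+O(h^\infty).
\]
This is the mechanism of the argument in \cite[Lemma 2.4]{Dy19a}, which the paper follows. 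You then take $J(\omega)=e^{iT\omega/h}U(T)A_T$, after your Step 3 argument that $A_T=A_T\Op_h^L(b)+O(h^\infty)$. The bound $\|J(\omega)\|_{L^2\to L^2}\le e^{-T\Im\omega/h}(1+o(1))$ comes from Proposition~\ref{obj:3.3} together with $\|U(T)\|_{L^2\to L^2}=1$.

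That unitarity is exactly why the paper's proof consists of constructing a \emph{self-adjoint} $P$ with $P^2=-h^2\Delta_g-\frac{h^2n^2}{4}+O(h^\infty)$ microlocally near $W_0$, as in \cite[(4.22)]{DyZa} and \cite[Lemma 4.6]{GrSj}. Your Step 1 never asks for self-adjointness. For a $P$ with merely real principal symbol you only get $\|U(T)\|\le e^{CT}$. That loss is fatal once $J$ is iterated $k(h)\sim\log\frac1h$ times in Lemmas~\ref{obj:3.12} and~\ref{obj:3.13}.

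The sign problem you flagged in Step 3 is a symptom of the same error, and reversing time does not fix it. In the correct formula the prefactor $e^{iT\omega/h}$ has modulus $e^{-T\Im\omega/h}$. Also $\sigma_h^L(A_T)=a\circ\varphi^T$ is supported in $\varphi^{-T}(\supp a)\subset\{b=1\}$, which is exactly what \eqref{eq:3.17} provides.

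Replacing $P$ by $-P$ fails on both counts. The Egorov symbol becomes $a\circ\varphi^{-T}$, supported in $\varphi^{T}(\supp a)$, about which \eqref{eq:3.17} says nothing. And your identity would then produce the elliptic factor $-(P+\omega)$ instead of the characteristic one.

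The rest of your plan does match the paper's route: the microlocal factorization $\mathcal P_h(\omega)=(P+\omega)(P-\omega)$ near $W_0$ with $P+\omega$ elliptic, the check $L\subset\ker dp$, $A_T(I-\Op_h^L(b))=O(h^\infty)$ via Lemma~\ref{obj:3.7} and Proposition~\ref{obj:3.5}, and Proposition~\ref{obj:3.3}. One small correction: the integrand $U(t)A_t=\Op_h^L(a)U(t)$ is not in $\Psi^{\rm comp}_{h,L,\rho,\rho'}(M)$. This is harmless, since $U(t)$ is bounded on each $H_h^s$ for bounded $t$, so the $H_h^{-N}\to H_h^N$ bounds on $Z$ and $J$ still hold.
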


\begin{proof}
We follow the proof of \cite[Lemma 2.4]{Dy19a}, using our Proposition~\ref{obj:3.3}, Proposition~\ref{obj:3.6}, and Lemma~\ref{obj:3.7} in place of \cite[Lemmas 3.15--3.17]{DyZa}. All that needs to be shown is that we may construct a self-adjoint operator $P\in\Psi_h^{\rm comp}(M)$ with
\[
P^2=-h^2\Delta_g-\frac{h^2n^2}{4}+O(h^\infty)\qquad\text{microlocally near }W_0,
\]
\[
\sigma_h(P)=p=|\zeta|_g\qquad\text{near }W_0.
\]
But this follows exactly as in \cite[(4.22)]{DyZa}, which in turn follows \cite[Lemma 4.6]{GrSj} and we conclude the proof.
\end{proof}

The following analogue of \cite[Lemma 2.5]{Dy19a} gives a version of Lemma~\ref{obj:3.8} for the class $\Psi^{\rm comp}_{h,L,\rho,\rho'}(M)$.

\begin{lem}\label{obj:3.11}
Let $a\in S^{\rm comp}_{L,\rho,\rho'}(T^*M\setminus0)$ where $L\in\{L_u,L_s\}$, $\rho,\rho'$ satisfy \eqref{eq:3.5}, and assume that $\supp a\subset V$ where
\[
V\subset\{r<r_0\}\setminus(\Gamma_+\cap\{|\zeta|_g=1\})
\]
is an $h$-independent compact subset. Then we have on $\mathcal X$
\[
\Op_h^L(a)=Z(\omega)\mathcal P_h(\omega)+O(h^\infty)_{\mathcal D'\to\mathcal X}
\]
where $Z(\omega)$ is holomorphic in $\omega\in\Omega$ and $\|Z(\omega)\|_{\mathcal Y\to\mathcal X}\leq Ch^{-1}$.
\end{lem}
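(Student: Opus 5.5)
The plan is to deduce Lemma~\ref{obj:3.11} from Lemma~\ref{obj:3.8} by sandwiching the anisotropic operator $\Op_h^L(a)$ between standard pseudodifferential cutoffs. This is the route taken for \cite[Lemma 2.5]{Dy19a}.

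First, since $V$ is an $h$-independent compact subset of $\{r<r_0\}$, I choose an $h$-independent open neighbourhood $V'$ of $V$. By compactness of $V$ and closedness of $\Gamma_+\cap\{|\zeta|_g=1\}$ in $T^*M\setminus0$ (intersected with $\{r\le r_0\}$), I can take $\overline{V'}$ compact, contained in $\{r<r_0\}$, and disjoint from $\Gamma_+\cap\{|\zeta|_g=1\}$. I then pick $A_1\in\Psi_h^{\rm comp}(M)\subset\Psi_h^0(M_{\rm ext})$ with $\WF_h(A_1)\subset V'$ and $\sigma_h(A_1)=1$ near $V$. This $A_1$ satisfies the hypotheses of Lemma~\ref{obj:3.8}, including \eqref{eq:3.13}. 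Hence on $\mathcal X$,
\[
A_1=Z_1(\omega)\mathcal P_h(\omega)+O(h^\infty)_{\mathcal D'\to\mathcal X},
\]
with $\|Z_1(\omega)\|_{\mathcal Y\to\mathcal X}\le Ch^{-1}$ and $Z_1$ holomorphic in $\omega$.

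Next I show that
\[
\Op_h^L(a)=\Op_h^L(a)A_1+O(h^\infty)_{\mathcal D'\to C_c^\infty}.
\]
The operator $I-A_1$ has symbol vanishing on a fixed neighbourhood of $\supp a$. I would argue this from the representation \eqref{eq:3.6}: each term is $B_\ell'\Op_h(a_\ell)B_\ell$, and $B_\ell(I-A_1)=\widetilde A_\ell B_\ell+O(h^\infty)$ by Egorov's theorem for standard Fourier integral operators. Here $\widetilde A_\ell\in\Psi_h^{\rm comp}$ has full symbol vanishing near $\supp a_\ell$. The product $\Op_h(a_\ell)\widetilde A_\ell$ is then $O(h^\infty)$ by the model calculus of \cite[Section A.2]{DyJi}, since the composition formula \eqref{eq:3.8} has every term vanishing identically and the remainder gains $h^{(1-\rho-\rho')N}$. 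Alternatively, this step follows from Proposition~\ref{obj:3.6}, applied with $B=A_1$ (viewed in the anisotropic class) and Proposition~\ref{obj:3.5} giving microlocal vanishing where $\sigma_h(A_1)$ is small. I expect this to be the main technical point, mainly because one must be careful that the remainder is $O(h^\infty)$ in the right operator norms. It is routine given the calculus.

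Combining the two steps gives
\[
\Op_h^L(a)=\Op_h^L(a)Z_1(\omega)\mathcal P_h(\omega)+\Op_h^L(a)\,O(h^\infty)_{\mathcal D'\to\mathcal X}+O(h^\infty)_{\mathcal D'\to C_c^\infty}.
\]
I set $Z(\omega):=\Op_h^L(a)Z_1(\omega)$, which is holomorphic in $\omega\in\Omega$. For the norm bounds, $\Op_h^L(a)$ is compactly microlocalized with smooth compactly supported Schwartz kernel. By the uniform bound in Lemma~\ref{obj:3.2}, it is bounded $H_h^{-N}\to H_h^{N}$ with norm $O(h^{-c})$; the loss $c$ comes from derivatives of $a$ of size $h^{-\rho}$. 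On the compact set $\{r\le r_0\}$, fiber-compactly supported, the variable order space $H_h^\phi$ is comparable to $L^2$. More precisely, $\Op_h^L(a)$ maps $\mathcal X\subset H_h^{s_-}$ into $H_h^{s_+}\subset\mathcal X$ boundedly, uniformly in $h$. This uniformity holds because $\Op_h^L(a)$ is microlocalized in a compact subset of $T^*M$, where all semiclassical Sobolev norms are equivalent to $L^2$ up to constants. Hence $\|Z(\omega)\|_{\mathcal Y\to\mathcal X}\le Ch^{-1}$. Finally, $\Op_h^L(a)$ composed with an $O(h^\infty)_{\mathcal D'\to\mathcal X}$ term is again $O(h^\infty)_{\mathcal D'\to\mathcal X}$, which completes the identity.
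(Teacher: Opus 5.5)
Your proposal is correct and follows the paper's route, which is that of \cite[Lemma 2.5]{Dy19a}. You choose $A_1\in\Psi_h^{\rm comp}(M)$ satisfying the hypotheses of Lemma~\ref{obj:3.8} with $\sigma_h(A_1)=1$ near $V$, factor $\Op_h^L(a)$ through $A_1$ (the paper uses Proposition~\ref{obj:3.6}, your ``alternative'', to get $\Op_h^L(a)=QA_1+O(h^\infty)$), and set $Z(\omega):=QZ_1(\omega)$. One small correction: there is no $h^{-c}$ loss in the $H_h^{-N}\to H_h^N$ bound for $\Op_h^L(a)$, since Lemma~\ref{obj:3.2} and compact microlocalization give uniform bounds; such a loss would in fact spoil $\|Z(\omega)\|_{\mathcal Y\to\mathcal X}\leq Ch^{-1}$, but your subsequent uniform claim is the correct one and is all you use.
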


The proof is the same as the proof of \cite[Lemma 2.5]{Dy19a}, using Proposition~\ref{obj:3.6} and Lemma~\ref{obj:3.8} in place of \cite[Lemma 3.16]{DyZa} and \cite[Lemma 2.2]{Dy19a}, respectively.

To this point, everything has been more-or-less straightforward adaptations of \cite[Section 2]{Dy19a} to the class $\Psi^{\rm comp}_{h,L,\rho,\rho'}(M)$, the flow $\varphi^t$, and the operator $\mathcal P_h(\omega)$. Starting here, we make the first significant modification. We give versions of \cite[Lemmas 2.6 and 2.7]{Dy19a} but for symbols closely related to those studied in \cite[Section 5]{ADM}. To prove the spectral gap in the next section, we show that in the absence of complex circles in the limit set, such symbols have support projecting to porous sets in the fast direction, and the application of the Fractal Uncertainty Principle via the method of \cite[Section 5]{ADM} then concludes the proof.

The following lemma is analogous to \cite[Lemma 2.6]{Dy19a}, and refines Lemma~\ref{obj:3.8}.

\begin{lem}\label{obj:3.12}
Fix $\rho\in[0,1)$, and $\chi\in C_c^\infty(T^*M\setminus0;[0,1])$ with $\chi=1$ near $K\cap\{|\zeta|_g=1\}$. Then there exists an integer $n_0>0$ such that uniformly in $\omega\in\Omega$,
\[
\Op_h^{L_u}\left(\chi\left(1-\prod_{m=1}^k\chi\circ\varphi^{-mn_0}\right)\right)=Z_+(\omega)\mathcal P_h(\omega)+O(h^\infty)_{\mathcal D'\to\mathcal X}
\]
where $k=k(h)$ is the smallest integer such that $\frac\rho2\log\frac1h\leq kn_0$, and $Z_+(\omega)$ is holomorphic in $\omega\in\Omega$ and satisfies
\begin{equation}
\|Z_+(\omega)\|_{\mathcal Y\to\mathcal X}\leq Ch^{-1-\rho \nu_0}.\label{eq:3.18}
\end{equation}
\end{lem}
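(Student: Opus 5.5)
We follow the proof of \cite[Lemma 2.6]{Dy19a} and use the anisotropic tools of Section~\ref{subsec:anisotropic-calculus}. The plan is to telescope the product and treat each resulting term with Lemmas~\ref{obj:3.10} and \ref{obj:3.11}.

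\textbf{Telescoping.} Write
\[
\chi\Big(1-\prod_{m=1}^k\chi\circ\varphi^{-mn_0}\Big)=\sum_{\ell=1}^{k} a_\ell,\qquad a_\ell:=\chi\Big(\prod_{m=1}^{\ell-1}\chi\circ\varphi^{-mn_0}\Big)\big(1-\chi\circ\varphi^{-\ell n_0}\big).
\]
Each $a_\ell$ is supported on points $q$ with $\varphi^{-mn_0}(q)\in\supp\chi$ for $m<\ell$ and $\varphi^{-\ell n_0}(q)\notin\{\chi=1\}$.

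\textbf{Symbol class.} We first check that $a_\ell$ lies in $S^{\rm comp}_{L_u,\rho,\rho'}$ uniformly for $\ell\le k$.
\begin{itemize}
\item Composition with $\varphi^{-t}$ preserves $L_u$.
\item Derivatives along $L_u$ are bounded, since backward flow contracts $E_u$.
\item Transversal derivatives grow at most like $e^{2t}$, by \eqref{eq:2.10}.
\item With $t\le kn_0\le\frac\rho2\log\frac1h+n_0$ this gives $e^{2t}\lesssim h^{-\rho}$.
\end{itemize}
Products of up to $k\sim\log(1/h)$ factors then need a Leibniz count. We expect this to be handled as in \cite[Lemma 5.?]{ADM} / \cite{DyZa}, using the hyperbolicity estimates \eqref{eq:3.19}--\eqref{eq:3.20} obtained from \cite{DyGu}. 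These estimates also force the choice of $n_0$: it must be large enough that the constants lost at each step are absorbed.

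\textbf{Each term.} We take $n_0$ large and $\chi$ fixed, using that $\supp\chi$ lies in a small neighborhood of $K\cap\{|\zeta|_g=1\}$. A point of $\supp a_\ell$ then has backward trajectory staying near $\supp\chi$ for time $(\ell-1)n_0$, and at time $\ell n_0$ it lies outside $\{\chi=1\}$. Let $b_\ell:=a_\ell\circ\varphi^{(\ell-1)n_0}$, a symbol supported in $\supp\chi\cap\varphi^{-n_0}(\{\chi\ne1\})$. We may choose $n_0$ so that this set avoids $\Gamma_+\cap\{|\zeta|_g=1\}$. Indeed, points near $K$ whose backward flow for time $n_0$ leaves $\{\chi=1\}$ are, by compactness, uniformly away from $\Gamma_+$ once $n_0$ is large.

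Lemma~\ref{obj:3.11} then gives $\Op_h^{L_u}(b_\ell)=Z_\ell\mathcal P_h(\omega)+O(h^\infty)$ with $\|Z_\ell\|\le Ch^{-1}$. To return to $a_\ell$ we propagate by time $(\ell-1)n_0$ using the Egorov statement Lemma~\ref{obj:3.7}. Concretely, we use the identity
\[
\Op_h^{L_u}(a_\ell)=e^{-i(\ell-1)n_0P/h}\,\Op_h^{L_u}(b_\ell)\,e^{i(\ell-1)n_0P/h}+O(h^\infty),
\]
together with the intertwining of $e^{-itP/h}$ with $\mathcal P_h(\omega)$ near $W_0$. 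That intertwining is how we pass from $b_\ell$ back to $a_\ell$. Alternatively, we can apply the Lemma~\ref{obj:3.10} argument with time $T=(\ell-1)n_0$, which is the route taken in \cite{Dy19a}. Since $\Im\omega\ge-\nu_0h$, the propagation costs a factor $\exp(\nu_0 T)\le h^{-\rho\nu_0/2}e^{\nu_0n_0}$. This is consistent with \eqref{eq:3.18}, and the factor $k\lesssim\log(1/h)$ from the sum fits in the remaining room, since $h^{-\rho\nu_0/2}\log\frac1h\le Ch^{-\rho\nu_0}$.

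\textbf{Main obstacle.} The hardest step is the uniform symbol estimate. We must show that the products $\prod_m\chi\circ\varphi^{-mn_0}$, and their propagated versions, lie in $S^{\rm comp}_{L_u,\rho,\rho'}$ with constants uniform in $\ell$ while using only $\frac\rho2\log\frac1h$ time. Both the factor $2$ from the fast direction and the $\varepsilon$-room come in here. For this we rely on \eqref{eq:2.10} and the \cite{DyGu}-type estimates.
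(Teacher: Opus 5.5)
There is a genuine gap in the step that takes you from $b_\ell$ back to $a_\ell$. That step propagates over the $h$-dependent time $(\ell-1)n_0$, which reaches $\frac\rho2\log\frac1h$. None of the available tools does this, and each route you offer fails.

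\begin{enumerate}[(i)]
\item $b_\ell=a_\ell\circ\varphi^{(\ell-1)n_0}$ is a product of the \emph{forward} compositions $\chi\circ\varphi^{jn_0}$, $-1\le j\le\ell-1$. Their derivatives along $E_u$ are of size $e^{2(\ell-1)n_0}$, up to $h^{-\rho}$. So $b_\ell$ is an $L_s$-type symbol, and writing $\Op_h^{L_u}(b_\ell)$ and applying Lemma~\ref{obj:3.11} with $L=L_u$ is not legitimate.
\item Lemmas~\ref{obj:3.7} and \ref{obj:3.10} are fixed-time statements, so you cannot simply run them with $T=(\ell-1)n_0$. The intermediate symbols $a_\ell\circ\varphi^t$ have $L_u$-derivatives of size $e^{2t}$. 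They therefore leave every class $S^{\rm comp}_{L_u,\rho,\rho'}$ allowed by \eqref{eq:3.5} once $e^{2t}\gg h^{-\rho/2}$.
\item Your conjugation identity with an $O(h^\infty)$ remainder is unjustified. Lemma~\ref{obj:3.7} only produces some family $A_t$ whose symbol agrees with $\sigma_h^L(A)\circ e^{tH_p}$ modulo $O(h^{1-\rho-\rho'})$, and only for bounded $t$. Moreover, $e^{itP/h}$ commutes with $\mathcal P_h(\omega)$ only microlocally near $W_0$.
\item Conjugating by the unitary group $e^{\pm itP/h}$ cannot produce the factor $\exp(-T\Im\omega/h)$ that you insert by hand. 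In Lemma~\ref{obj:3.10} that factor comes from the one-sided propagator $e^{-iT(P-\omega)/h}$. It is genuinely needed, since outgoing solutions of $\mathcal P_h(\omega)u=f$ pick up $e^{-t\Im\omega/h}$ along the flow.
\end{enumerate}

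The missing idea is never to propagate for more than a bounded time. The paper recurses on the cumulative operators $A_+^j=\Op_h^{L_u}\big(\chi(1-\prod_{m=1}^j\chi\circ\varphi^{-mn_0})\big)$.
\begin{itemize}
\item It splits $\chi=\chi_1+\chi_2$, with $\chi_1$ supported in $\{\frac12\le|\zeta|_g\le2\}$ and $\supp\chi_2$ disjoint from $\Gamma_+\cap\{|\zeta|_g=1\}$.
\item It proves the support inclusion \eqref{eq:3.27}: $\varphi^{-(n_0+T_0)}$ maps the support of the $\chi_1$-part of the $(j+1)$-st symbol into the set where the $j$-th symbol equals $1$. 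This is where \eqref{eq:3.19} and \eqref{eq:3.20} enter, through \eqref{eq:3.24}--\eqref{eq:3.26}.
\item Lemma~\ref{obj:3.10} with the fixed time $n_0+T_0$, together with Lemma~\ref{obj:3.11}, then gives $A_+^{j+1}=Z_+^j\mathcal P_h(\omega)+J_+^jA_+^j+O(h^\infty)$. Here $\|J_+^j\|_{L^2\to L^2}\le\exp(n_0(\nu_0-\Im\omega/h))\le e^{2\nu_0n_0}$.
\item The base case is Lemma~\ref{obj:3.8}. Chaining at most $k-1$ steps, with $(k-1)n_0<\frac\rho2\log\frac1h$, yields the loss $h^{-\rho\nu_0}$ in \eqref{eq:3.18}.
\end{itemize}

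Throughout, all symbols stay uniformly in $S^{\rm comp}_{L_u,\rho+\varepsilon,\varepsilon}$. That is just \cite[Lemma 4.1]{ADM}, so the symbol estimate you single out is not the main obstacle, and it has nothing to do with \eqref{eq:3.19}--\eqref{eq:3.20}.

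The cumulative form also matters. Your telescoped pieces $a_\ell$ need not recurse this way, because the backward exit from $\{\chi=1\}$ may occur anywhere in $((\ell-1)n_0,\ell n_0]$. Hence $\varphi^{-(n_0+T_0)}(\supp a_\ell)$ need not lie in $\{a_{\ell-1}=1\}$. By contrast, the factor $1-\prod_{m\le j}$ only requires that some exit time exist.
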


\begin{rem}
Our choice of $n_0$ and $k(h)$ implies that $kn_0\sim\frac\rho2\log\frac1h$. Thus in particular, the symbol $\chi\left(1-\prod_{m=1}^k\chi\circ\varphi^{-mn_0}\right)\in S^{\rm comp}_{L_u,\rho+\varepsilon,\varepsilon}$ for small $\varepsilon$, as it is a product of $k(h)\leq C\log\frac1h$ symbols in $S^{\rm comp}_{L_u,\rho,0}$ by \cite[Lemma 4.1]{ADM}.
\end{rem}

\begin{proof}
We closely follow the proof of \cite[Lemma 2.6]{Dy19a}, but with modifications allowing our improvement. Fix $T_0>0$ so that for all $(z,\zeta)\in\{|\zeta|_g=1\}$ and $t,t_1,t_2\geq T_0$, we have
\begin{equation}
(z,\zeta)\in\Gamma_+\cap\supp\chi\quad\Longrightarrow\quad\varphi^{-t}(z,\zeta)\notin\supp(1-\chi),\label{eq:3.19}
\end{equation}
\begin{equation}
(z,\zeta)\in\varphi^{t_1}(\supp\chi)\cap\varphi^{-t_2}(\supp\chi)\quad\Longrightarrow\quad(z,\zeta)\notin\supp(1-\chi).\label{eq:3.20}
\end{equation}
Such $T_0$ exists by \cite[Lemmas 2.3 and 2.4]{DyGu}, using that $\chi=1$ near $K\cap\{|\zeta|_g=1\}$. Choose an integer $n_0\in [2T_0,4T_0]$. Note that there exists $\varepsilon_1>0$ such that
\begin{equation}
\exp(-(n_0+T_0)\Im\omega/h)+\varepsilon_1\leq\exp(n_0(\nu_0-\Im\omega/h)).\label{eq:3.21}
\end{equation}
Define
\[
A_+^j:=\Op_h^{L_u}\left(\chi\left(1-\prod_{m=1}^j\chi\circ\varphi^{-mn_0}\right)\right).
\]
With $k=k(h)$ as in the statement of the lemma, we claim that uniformly in $j=1,\ldots,k-1$,
\begin{equation}
A_+^{j+1}=Z_+^j(\omega)\mathcal P_h(\omega)+J_+^j(\omega)A_+^j+O(h^\infty)_{\mathcal D'\to\mathcal X},\label{eq:3.22}
\end{equation}
where $Z_+^j(\omega),J_+^j(\omega)$ are holomorphic in $\omega\in\Omega$ and for all $N$,
\begin{equation}
\|Z_+^j(\omega)\|_{\mathcal Y\to\mathcal X}\leq Ch^{-1},\qquad
\|J_+^j(\omega)\|_{H_h^{-N}\to H_h^N}\leq C_N,
\qquad
\|J_+^j(\omega)\|_{L^2\to L^2}\leq\exp(n_0(\nu_0-\Im\omega/h)).\label{eq:3.23}
\end{equation}
In order to prove this, we decompose $\chi=\chi_1+\chi_2$ with $\chi_1,\chi_2\in C_c^\infty(T^*M\setminus0;[0,1])$,
\[
\supp\chi_1\subset\left\{\frac12\leq|\zeta|_g\leq2\right\},\qquad \supp\chi_2\cap\Gamma_+\cap\{|\zeta|_g=1\}=\varnothing,
\]
where $\chi_1,\chi_2$ are independent of $j,h$, and for all $t\in[T_0,5T_0]$, $t_1,t_2\geq T_0$, we have
\begin{equation}
(z,\zeta)\in\supp\chi_1\quad\Longrightarrow\quad\varphi^{-t}(z,\zeta)\notin\supp(1-\chi),\label{eq:3.24}
\end{equation}
\begin{equation}
(z,\zeta)\in\varphi^{t_1}(\supp\chi)\cap\varphi^{-t_2}(\supp\chi_1)\quad\Longrightarrow\quad(z,\zeta)\notin\supp(1-\chi),\label{eq:3.25}
\end{equation}
\begin{equation}
(z,\zeta)\in\varphi^{t_1}(\supp\chi_1)\cap\varphi^{-t_2}(\supp\chi)\quad\Longrightarrow\quad(z,\zeta)\notin\supp(1-\chi).\label{eq:3.26}
\end{equation}
(Note that \eqref{eq:3.24} is easy to arrange, and that \eqref{eq:3.25},\eqref{eq:3.26} follow automatically from \eqref{eq:3.20} as long as $\supp\chi_1\subset\supp\chi$.) We claim that for each $j=1,\ldots,k-1$,
\begin{equation}
\varphi^{-(n_0+T_0)}\left(\supp\left(\chi_1\left(1-\prod_{m=1}^{j+1}\chi\circ\varphi^{-mn_0}\right)\right)\right)
\subset\left\{\chi\left(1-\prod_{m=1}^j\chi\circ\varphi^{-mn_0}\right)=1\right\}.\label{eq:3.27}
\end{equation}
Indeed, let $(z,\zeta)\in\supp\left(\chi_1\left(1-\prod_{m=1}^{j+1}\chi\circ\varphi^{-mn_0}\right)\right)$. We need to show that
\begin{enumerate}[i)]
\item $\chi(\varphi^{-(n_0+T_0)}(z,\zeta))=1$,
\item $\chi(\varphi^{-(m+1)n_0-T_0}(z,\zeta))=0$, for some $m \in \{1,\ldots,j\}$.
\end{enumerate}
Now, (i) follows from \eqref{eq:3.24} since $n_0+T_0\in[T_0,5T_0]$. To prove (ii), we note that $\varphi^{-rn_0}(z,\zeta)\in\supp(1-\chi)$ for some $r \in\{1,\ldots,j+1  \}$. Applying \eqref{eq:3.25} to $\varphi^{-rn_0}(z,\zeta)\in\supp(1-\chi)$ with $t_1=T_0$, $t_2=rn_0$, we get $\chi(\varphi^{-rn_0-T_0}(z,\zeta))=0$. By (i), we know $r\in\{2,\ldots, j+1\}$. Thus, \eqref{eq:3.27} holds.

To show \eqref{eq:3.22}, we write
\[
A_+^{j+1}=\Op_h^{L_u}\left(\chi_1\left(1-\prod_{m=1}^{j+1}\chi\circ\varphi^{-mn_0}\right)\right)+\Op_h^{L_u}\left(\chi_2\left(1-\prod_{m=1}^{j+1}\chi\circ\varphi^{-mn_0}\right)\right).
\]
Writing the first term on the right using Lemma~\ref{obj:3.10} and \eqref{eq:3.27}, and the second term using Lemma~\ref{obj:3.11}, we obtain \eqref{eq:3.22}; note that \eqref{eq:3.21} is used to obtain the bound on $J_+^j$.

By \eqref{eq:3.19},
\[
\supp\big(\chi(1-\chi\circ\varphi^{-n_0})\big)\cap\Gamma_+\cap\{|\zeta|_g=1\}=\varnothing.
\]
Thus, by Lemma~\ref{obj:3.8}, we may write
\begin{equation}
A_+^1=Z_+^0(\omega)\mathcal P_h(\omega)+O(h^\infty)_{\mathcal D'\to\mathcal X},\label{eq:3.28}
\end{equation}
for $Z_+^0(\omega)$ holomorphic in $\omega\in\Omega$ and $\|Z_+^0(\omega)\|_{\mathcal Y\to\mathcal X}\leq Ch^{-1}$. To conclude, we set
\[
Z_+(\omega):=\sum_{j=0}^{k-1}J_+^{k-1}(\omega)\cdots J_+^{j+1}(\omega)Z_+^j(\omega)
\]
and use \eqref{eq:3.22},\eqref{eq:3.28}.
\end{proof}

Our final result of this section is analogous to \cite[Lemma 2.7]{Dy19a} and refines Lemma~\ref{obj:3.9}.

\begin{lem}\label{obj:3.13}
Fix $\rho\in[0,1)$, and $\chi\in C_c^\infty(T^*M\setminus0;[0,1])$ with $\chi=1$ near $K\cap\{|\zeta|_g=1\}$. Then there exists an integer $n_0>0$ such that uniformly in $\omega\in\Omega$,
\[
I=Z_-(\omega)\mathcal P_h(\omega)+J_-(\omega)\Op_h^{L_s}\left(\prod_{m=0}^k\chi\circ\varphi^{mn_0}\right)+O(h^\infty)_{\mathcal X\to\mathcal X}
\]
where $k=k(h)$ is the smallest integer such that $\frac\rho2\log\frac1h\leq kn_0$, and $Z_-(\omega),J_-(\omega)$ are holomorphic in $\omega\in\Omega$ and satisfy
\begin{equation}
\|Z_-(\omega)\|_{\mathcal Y\to\mathcal X}\leq Ch^{-1-\rho \nu_0},\qquad
\|J_-(\omega)\|_{H_h^{-N}\to\mathcal X}\leq C_N\exp(kn_0(\nu_0-\Im\omega/h)).\label{eq:3.29}
\end{equation}
\end{lem}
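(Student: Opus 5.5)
We mirror the proof of Lemma~\ref{obj:3.12}, now propagating forward and with $L_s$ in place of $L_u$, in the same way that \cite[Lemma 2.7]{Dy19a} mirrors \cite[Lemma 2.6]{Dy19a}. The same $T_0$ and $n_0\in[2T_0,4T_0]$ are used, obtained from \cite[Lemmas 2.3 and 2.4]{DyGu}, but with \eqref{eq:3.19} replaced by its time-reversed version for $\Gamma_-$. As in the remark after Lemma~\ref{obj:3.12}, the symbols $\prod_{m=0}^j\chi\circ\varphi^{mn_0}$ with $j\le k$ lie in $S^{\rm comp}_{L_s,\rho+\varepsilon,\varepsilon}$ by \cite[Lemma 4.1]{ADM}. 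This uses that forward propagation for time $\sim\frac\rho2\log\frac1h$ is controlled along $L_s$: derivatives transverse to $L_s$ grow like $e^{2t}\le h^{-\rho}$, while those along $L_s$ stay bounded.

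Define $A_-^j:=\Op_h^{L_s}\big(\prod_{m=0}^j\chi\circ\varphi^{mn_0}\big)$ for $j=0,\dots,k$.

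\textbf{Base case.} This is Lemma~\ref{obj:3.9} with $A_2=\Op_h(\chi)$, which is elliptic on $K\cap\{|\zeta|_g=1\}$. It gives $I=Z_2\mathcal P_h+J_2A_-^0+O(h^\infty)$.

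\textbf{Inductive identity.} We show that, uniformly in $j=0,\dots,k-1$,
\[
A_-^j=Z_-^j(\omega)\mathcal P_h(\omega)+J_-^j(\omega)A_-^{j+1}+O(h^\infty)_{\mathcal X\to\mathcal X},
\]
with $\|Z_-^j\|_{\mathcal Y\to\mathcal X}\le Ch^{-1}$, $\|J_-^j\|_{H_h^{-N}\to H_h^N}\le C_N$ and $\|J_-^j\|_{L^2\to L^2}\le \exp(n_0(\nu_0-\Im\omega/h))$. To prove it, split $\chi=\chi_1+\chi_2$ as in the proof of Lemma~\ref{obj:3.12}, but now with $\supp\chi_2\cap\Gamma_-\cap\{|\zeta|_g=1\}=\varnothing$. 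The $\chi_2$-piece of $A_-^j$ needs an analogue of Lemma~\ref{obj:3.11} with $\Gamma_+$ replaced by the complement of $\Gamma_-$. We get this exactly as in Lemma~\ref{obj:3.9}: in a region away from $\Gamma_-$, backward propagation reaches the absorbing or elliptic region, so $\Op_h^{L_s}(a)$ factors through $\mathcal P_h$ up to a term controlled by $A_-^{j+1}$. For the $\chi_1$-piece we apply Lemma~\ref{obj:3.10} with $T=n_0+T_0$, $a=\chi_1\prod_{m=0}^j\chi\circ\varphi^{mn_0}$, and $b$ equal to the $(j+1)$-fold product. This requires the inclusion
\[
\varphi^{-T}(\supp a)\subset\{b=1\},
\]
or more precisely its forward-time analogue after the time shift. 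As in the proof of \eqref{eq:3.27}, it follows from \eqref{eq:3.24}--\eqref{eq:3.26}: the intermediate points of forward and backward trajectories that stay in $\supp\chi$ lie in $\{\chi=1\}$. The bound on $J_-^j$ then follows from \eqref{eq:3.21}. The principal-symbol and ellipticity inputs are Propositions~\ref{obj:3.3} and \ref{obj:3.6} and Lemma~\ref{obj:3.7} with $L=L_s$; condition \eqref{eq:3.9} holds because $H_p=X\in L_s$.

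\textbf{Iteration.} Substituting the inductive identity successively into the base case gives
\[
Z_-=Z_2+\sum_{j} J_2J_-^0\cdots J_-^{j-1}Z_-^j,\qquad J_-=J_2J_-^0\cdots J_-^{k-1}.
\]
There are $k\lesssim\log\frac1h$ terms. Since $\nu_0-\Im\omega/h\le\nu_0$ on $\Omega$, each product of $L^2$ norms is at most $e^{kn_0\nu_0}\lesssim h^{-\rho\nu_0/2}\cdot C$, which yields \eqref{eq:3.29}. The $H_h^{-N}\to\mathcal X$ bound for $J_-$ comes from interpolating the smoothing bounds with the $L^2$ bounds: the outer factor $J_2$ maps $H_h^{-N}\to\mathcal X$, and the inner smoothing factors map $H_h^{-N}\to L^2$ with $O(1)$ norm.

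\textbf{Main obstacle.} The step I expect to be hardest is keeping the $O(h^\infty)$ remainders summable over $k\sim\log(1/h)$ steps while staying in $\mathcal X$, given the limited regularity discussed in the Remark after Lemma~\ref{obj:3.8}. This requires the constants in the inductive identity to be uniform in $j$, which holds because all symbols lie in a fixed class $S^{\rm comp}_{L_s,\rho+\varepsilon,\varepsilon}$ with seminorms uniform in $j\le k$.
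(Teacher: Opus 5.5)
There is a genuine gap in your inductive step: you time-reverse the splitting $\chi=\chi_1+\chi_2$, taking $\chi_2$ away from $\Gamma_-$, and this breaks both halves of the step. Your skeleton is otherwise sound: base case from Lemma~\ref{obj:3.9}, one-step identities from Lemmas~\ref{obj:3.10} and \ref{obj:3.11}, then telescoping.

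The estimates available for $\mathcal P_h(\omega)$ propagate only \emph{forward} along $\varphi^t$; this is the outgoing condition built into $\mathcal P_h(\omega)-iQ_1$. Concretely:
\begin{itemize}
\item Lemma~\ref{obj:3.11} requires the support to avoid $\Gamma_+\cap\{|\zeta|_g=1\}$, i.e.\ \emph{backward} trajectories must escape.
\item Lemma~\ref{obj:3.10} trades $\Op_h^L(a)$ for $\Op_h^L(b)$ only when $b=1$ on $\varphi^{-T}(\supp a)$.
\end{itemize}
Your justification that ``away from $\Gamma_-$, backward propagation reaches the absorbing or elliptic region'' is false. Points off $\Gamma_-$ escape in \emph{forward} time. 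Points of $\Gamma_+\setminus K$ lie off $\Gamma_-$, yet their backward trajectories converge to $K$, where nothing is controlled a priori. So there is no analogue of Lemma~\ref{obj:3.11} for symbols supported away from $\Gamma_-$. For the same reason there is no ``forward-time analogue'' of Lemma~\ref{obj:3.10}.

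With $\chi_1$ concentrated near $\Gamma_-$ rather than $\Gamma_+$, the backward inclusion that Lemma~\ref{obj:3.10} does require also fails, exactly at the backward times.

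The fix is to reverse nothing on the operator side. This is what the paper does: it keeps $T_0,n_0,\chi_1,\chi_2$ exactly as in Lemma~\ref{obj:3.12}, so $\chi_2$ avoids $\Gamma_+$ and is handled by Lemma~\ref{obj:3.11}. The forward character of $\prod_m\chi\circ\varphi^{mn_0}$ then lives entirely on the symbol side. For the $\chi_1$-piece, apply the backward Lemma~\ref{obj:3.10} with $T=n_0+T_0$ and verify
\[
\varphi^{-(n_0+T_0)}\Big(\supp\Big(\chi_1\prod_{m=1}^j\chi\circ\varphi^{mn_0}\Big)\Big)\subset\Big\{\prod_{m=0}^{j+1}\chi\circ\varphi^{mn_0}=1\Big\},
\]
that is, $\chi(\varphi^{(m-1)n_0-T_0}q)=1$ for $m=0,\dots,j+1$.
\begin{itemize}
\item The two backward times, $m=0,1$, follow from \eqref{eq:3.24}. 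This is precisely where $\chi_1$ must sit near $\Gamma_+$.
\item The times with $m\geq2$ lie between $q\in\supp\chi_1$ and $\varphi^{(m-1)n_0}q\in\supp\chi$, so \eqref{eq:3.26} applies.
\end{itemize}
Gaining a factor is compatible with propagating backward because $b\circ\varphi^{-(n_0+T_0)}$ only sees $\chi$ up to time $jn_0-T_0<jn_0$.

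A smaller slip: on $\Omega$ you only have $\nu_0-\Im\omega/h\leq 2\nu_0$, since $\Im\omega\geq-\nu_0h$. So the products of the $J_-^j$ are $O(h^{-\rho\nu_0})$, not $O(h^{-\rho\nu_0/2})$. This is where the $h^{-\rho\nu_0}$ in the bound on $Z_-$ in \eqref{eq:3.29} comes from.
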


\begin{proof}
We follow the proof of \cite[Lemma 2.7]{Dy19a}, making necessary changes. Let $T_0,n_0,\chi_1,\chi_2$ be as in the proof of Lemma~\ref{obj:3.12}. Set
\[
A_-^j:=\Op_h^{L_s}\left(\prod_{m=0}^j\chi\circ\varphi^{mn_0}\right).
\]
With $k=k(h)$ as in the statement of the lemma, we claim that uniformly in $j=1,\ldots,k-1$,
\begin{equation}
A_-^j=Z_-^j(\omega)\mathcal P_h(\omega)+J_-^j(\omega)A_-^{j+1}+O(h^\infty)_{\mathcal D'\to\mathcal X},\label{eq:3.30}
\end{equation}
where $Z_-^j(\omega),J_-^j(\omega)$ are holomorphic in $\omega\in\Omega$ and satisfy the bounds \eqref{eq:3.23}. To see this, we first claim that for all $j=1,\ldots,k-1$,
\[
\varphi^{-(n_0+T_0)}\left(\supp\left(\chi_1\prod_{m=1}^j\chi\circ\varphi^{mn_0}\right)\right)\subset\left\{\prod_{m=0}^{j+1}\chi\circ\varphi^{mn_0}=1\right\}.
\]
Indeed, let $(z,\zeta)\in\supp\left(\chi_1\prod_{m=1}^j\chi\circ\varphi^{mn_0}\right)$. We need to show that
\[
\chi(\varphi^{mn_0-(n_0+T_0)}(z,\zeta))=1,\qquad m=0,\ldots,j+1.
\]
Applying \eqref{eq:3.24} gives the cases $m=0,1$, while for $m=2,\ldots,j+1$ we apply \eqref{eq:3.26} to $\varphi^{(m-1)n_0-T_0}(z,\zeta)$, $t_1=(m-1)n_0-T_0$, $t_2=T_0$. Thus, \eqref{eq:3.30} is proved using Lemmas~\ref{obj:3.10} and \ref{obj:3.11} as in the proof of Lemma~\ref{obj:3.12}.

Next, we have
\[
K\cap\{|\zeta|_g=1\}\subset\{\chi(\chi\circ\varphi^{n_0})=1\}.
\]
It follows from Lemma~\ref{obj:3.9} that
\begin{equation}
I=Z_-^0(\omega)\mathcal P_h(\omega)+J_-^0(\omega)A_-^1+O(h^\infty)_{\mathcal D'\to\mathcal X}\label{eq:3.31}
\end{equation}
where $Z_-^0(\omega),J_-^0(\omega)$ are holomorphic in $\omega\in\Omega$ and satisfy $\|Z_-^0(\omega)\|_{\mathcal Y\to\mathcal X}\leq Ch^{-1}$, $\|J_-^0(\omega)\|_{H_h^{-N}\to\mathcal X}\leq C_N$ for all $N$. The conclusion follows by setting
\[
Z_-(\omega):=\sum_{j=0}^{k-1}J_-^0(\omega)\cdots J_-^{j-1}(\omega)Z_-^j(\omega),\qquad J_-(\omega):=J_-^0(\omega)\cdots J_-^{k-1}(\omega)
\]
and using \eqref{eq:3.30},\eqref{eq:3.31}.
\end{proof}

Together, Lemmas~\ref{obj:3.12} and \ref{obj:3.13} show that resonant states inherit strong microlocalization properties. We exploit these results in the next section to prove our spectral gap.

\section{Reduction to the fractal uncertainty principle}\label{sec:reduction-fup}

In this section we use the fine microlocal estimates of the previous section to reduce the proof of our Theorem to a certain operator norm bound (Proposition~\ref{obj:4.1}). We then make this estimate by modifying part of the approach of \cite{ADM}, which estimates a similar operator via the Fractal Uncertainty Principle of Bourgain--Dyatlov \cite{BoDy18}, as generalized by Dyatlov--Jin--Nonnenmacher \cite{DJN} (see \cite[Section 5.2]{ADM}). Our work in Section~\ref{sec:scattering-resolvent} -- in particular Lemmas~\ref{obj:3.12} and \ref{obj:3.13} -- is inspired by the form of the operators analyzed in \cite[Section 5]{ADM}, and allows a straightforward adaptation of their methods to the present setting.

We begin by constructing the operators we use in our modification of the argument of \cite{DyZa}, \cite{Dy19a}. Recall the fast horocyclic flows $e^{sV^\pm}:S^*M\to S^*M$ from Section~\ref{sec:complex-hyperbolic-manifolds}. Under the hypothesis of our Theorem that $\Lambda_\Gamma$ contains no complex circle, Lemma~\ref{obj:2.5} shows that $K\cap\{|\zeta|_g=1\}$ may contain no $V^\pm$-orbit. Since $K\cap\{|\zeta|_g=1\}$ is compact, we may apply Lemma~\ref{obj:2.4} (twice) to find a set $U\subset S^*M$ such that
\begin{enumerate}[i)]
\item $S^*M\setminus U$ is a compact set containing $K\cap\{|\zeta|_g=1\}$ in its interior,
\item $U$ is $V^\pm$-dense,
\item There exists $T>0$ such that each $V^\pm$-segment of length $T$ intersects $U$.
\end{enumerate}
We extend $U$ to a conic open set in $T^*M\setminus0$, also denoted $U$.

We now take $\chi_\pm\in C_c^\infty(T^*M\setminus0;[0,1])$ with $\chi_\pm=1$ near $K\cap\{|\zeta|_g=1\}$, $\supp\chi_\pm\subset(T^*M\setminus U)^\circ$, and $\chi_+=1$ near $\supp\chi_-$.

Given small $\varepsilon_0>0$, we fix $\rho:=\frac23(1-\varepsilon_0)$ as in \cite{ADM}. Apply Lemma~\ref{obj:3.12} to $\chi_+$ to find an integer $n_0>0$ such that the symbol (recall that $k=k(h)$ is the smallest integer such that $\frac\rho2\log\frac1h\leq kn_0$),
\begin{equation}
a_+:=\prod_{m=0}^k\chi_+\circ\varphi^{-mn_0}\in S^{\rm comp}_{L_u,\rho+\varepsilon,\varepsilon}\label{eq:4.1}
\end{equation}
for small $\varepsilon>0$, satisfies
\begin{equation}
\Op_h^{L_u}(\chi_+)-\Op_h^{L_u}(a_+)=Z_+(\omega)\mathcal P_h(\omega)+O(h^\infty)_{\mathcal D'\to\mathcal X}.\label{eq:4.2}
\end{equation}
Similarly, we apply Lemma~\ref{obj:3.13} to $\chi_-$ to see that the symbol
\begin{equation}
a_-:=\prod_{m=0}^k\chi_-\circ\varphi^{mn_0}\in S^{\rm comp}_{L_s,\rho+\varepsilon,\varepsilon}\label{eq:4.3}
\end{equation}
for small $\varepsilon>0$, satisfies
\begin{equation}
I=Z_-(\omega)\mathcal P_h(\omega)+J_-(\omega)\Op_h^{L_s}(a_-)+O(h^\infty)_{\mathcal X\to\mathcal X}.\label{eq:4.4}
\end{equation}
The fact that we may take the same integer $n_0$ in these applications despite different cutoffs follows because we may take $T_0$ as large as we like in the proofs of Lemmas~\ref{obj:3.12} and \ref{obj:3.13}.

We now reduce the proof of the Theorem to an operator norm bound:

\begin{prop}\label{obj:4.1}
If there exists $\beta_0>0$ such that
\begin{equation}
\|\Op_h^{L_s}(a_-)\Op_h^{L_u}(a_+)\|_{L^2\to L^2}\leq Ch^{\beta_0},\label{eq:4.5}
\end{equation}
then there exist $C_0,\beta>0$ such that for small $\varepsilon>0$ and each $\chi\in C_c^\infty(M)$, we have
\begin{equation}
\|\chi R(\lambda)\chi\|_{L^2\to L^2}\leq C_{\chi,\varepsilon}|\lambda|^{-1-\frac43\min\{0,\Im\lambda\}+\varepsilon},\qquad |\lambda|>C_0,\quad \Im\lambda>-\beta.\label{eq:4.6}
\end{equation}
\end{prop}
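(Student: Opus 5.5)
We follow the argument of \cite[Section 3]{DyZa} as streamlined in \cite[Section 3]{Dy19a}, with the roles of the refined identities played by \eqref{eq:4.2} and \eqref{eq:4.4}. Rescale semiclassically: for $|\lambda|$ large set $h:=|\Re\lambda|^{-1}$ and $\omega:=h\lambda\in\Omega$ after choosing $\nu_0$ so that $\Im\omega\in h[-\nu_0,1/4]$ covers $\Im\lambda\in[-\beta,1/4]$. In the physical region $\Im\lambda\ge 1/4$, the bound \eqref{eq:4.6} is classical, with the $|\lambda|^{-1}$ nontrapping-type bound obtained from \cite[Theorem 2.5]{quan2021microlocal}. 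By \eqref{eq:3.4} it therefore suffices to prove the high-frequency estimate
\[
\|\mathcal P_h(\omega)^{-1}\|_{\mathcal Y\to\mathcal X}\le Ch^{-1-\frac43\max\{0,-\Im\omega/h\}-\varepsilon}.
\]
This bound must hold for $\omega\in\Omega$ with $\Im\omega\ge-\beta h$ and $h$ small. It also shows in particular that $\mathcal P_h(\omega)$ is invertible there, since the Fredholm property is already known. The estimate is proved as an a priori bound $\|u\|_{\mathcal X}\le C h^{-1-\cdots}\|\mathcal P_h(\omega)u\|_{\mathcal Y}$ for $u\in\mathcal X$; index zero then gives invertibility.

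The key steps, in order, are as follows. First apply \eqref{eq:4.4} to $u$:
\[
u=Z_-\mathcal P_h u+J_-\Op_h^{L_s}(a_-)u+O(h^\infty)\|u\|_{\mathcal X}.
\]
Next, since $\chi_+=1$ near $\supp\chi_-$, hence near $\supp a_-$, insert $\Op_h^{L_u}(\chi_+)$. Here one needs $\Op_h^{L_s}(a_-)=\Op_h^{L_s}(a_-)\Op_h^{L_u}(\chi_+)+O(h^\infty)$. This follows from the mixed composition in the anisotropic calculus, since $\chi_+$ is an $h$-independent classical symbol and $a_-$ has support strictly inside $\{\chi_+=1\}$; \cite[Appendix]{DyJi} together with Proposition~\ref{obj:3.6} gives this. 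Then use \eqref{eq:4.2} to replace $\Op_h^{L_u}(\chi_+)$ by $\Op_h^{L_u}(a_+)+Z_+\mathcal P_h$. This yields
\[
u=\bigl(Z_-+J_-\Op_h^{L_s}(a_-)Z_+\bigr)\mathcal P_h u+J_-\Op_h^{L_s}(a_-)\Op_h^{L_u}(a_+)u+O(h^\infty)\|u\|_{\mathcal X}.
\]

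Now estimate the terms, with $\nu:=\max\{0,-\Im\omega/h\}\le\beta$. Since $kn_0\le\frac\rho2\log\frac1h+n_0$, we have $\exp(kn_0(\nu_0-\Im\omega/h))\lesssim h^{-\frac\rho2(\nu_0+\nu)}$. Because $\nu_0$ is arbitrary in the derivation (we may shrink the loss $\nu_0$ in \eqref{eq:3.21} by enlarging $n_0$, or treat the $\nu_0$ losses as the $\varepsilon$), we get $\|J_-\|\lesssim h^{-\frac\rho2\nu-\varepsilon}$. Similarly, by \eqref{eq:3.18} and \eqref{eq:3.29}, $\|Z_\pm\|\lesssim h^{-1-\varepsilon}$. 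The middle term must be bounded $\mathcal X\to\mathcal X$. For this, factor the $L^2$ estimate \eqref{eq:4.5} between compactly microlocalized operators, e.g.\ writing $\Op_h^{L_u}(a_+)=\Op_h^{L_u}(a_+)B$ with $B\in\Psi_h^{\rm comp}$, so that $L^2$ and $\mathcal X$-norms are comparable. This gives a bound $Ch^{\beta_0-\frac\rho2\nu-\varepsilon}\|u\|_{\mathcal X}$. Choosing $\beta<\beta_0/\rho$, with margin, this is $\le\frac12\|u\|_{\mathcal X}$ for small $h$ and can be absorbed. The remaining term then gives $\|u\|_{\mathcal X}\lesssim h^{-1-\frac\rho2\nu-\varepsilon}\cdot h^{-\frac\rho2\nu}\cdots$. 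Tracking carefully, the worst product is $J_-\Op(a_-)Z_+$, which contributes $h^{-1-\frac\rho2\nu}$ from $J_-$ times a possible additional $h^{-\frac\rho2\nu}$ hidden in the $J^j_+$ chain inside $Z_+$. Altogether this is $h^{-1-\rho\nu-\varepsilon}$, and with $\rho=\frac23(1-\varepsilon_0)<\frac23$ this gives the exponent $\frac43\nu$ after undoing the scaling ($|\lambda|^{-1}$ from $h^{-2}$ scaling of $-h^2\Delta$ versus $h^{-1}$ loss).

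The main obstacle is the bookkeeping of powers, namely making sure the $\nu_0$-dependent losses in \eqref{eq:3.18} and \eqref{eq:3.29} really reduce to the $\varepsilon$ and to exactly the factor $\frac43$. Concretely, the $J^j_+$ chain in $Z_+$ should be estimated with the $\Im\omega$-dependent bound \eqref{eq:3.23} rather than the crude \eqref{eq:3.18}, and $\nu_0$ must be chosen small relative to $\varepsilon$, uniformly for $\omega$ in the region. A secondary technical point is the justification of the mixed composition $\Op_h^{L_s}(a_-)\Op_h^{L_u}(\chi_+)$ and the transfer of \eqref{eq:4.5} from $L^2$ to the variable order spaces $\mathcal X$. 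Both are handled by the compact microlocalization of $a_\pm$ inside $\{r\le r_0\}$, where all $H_h^\phi$ norms are equivalent to $L^2$ up to $O(h^\infty)$.
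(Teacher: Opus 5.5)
Your skeleton is the paper's. You reduce to an a priori bound on $\mathcal X$, apply \eqref{eq:4.4}, and use $\chi_+=1$ near $\supp\chi_-$ to insert $\Op_h^{L_u}(\chi_+)$. You then replace it via \eqref{eq:4.2} and absorb $J_-(\omega)\Op_h^{L_s}(a_-)\Op_h^{L_u}(a_+)u$ using \eqref{eq:4.5}.

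One minor point: the insertion is only justified up to $O(h)$. The paper uses $\Op_h^{L_u}(\chi_+)=\Op_h(\chi_+)+O(h)$. Proposition~\ref{obj:3.6} only factors $\Op_h^{L_s}(a_-)=Q\,\Op_h^{L_u}(\chi_+)$ with an unknown $Q$, which does not let you invoke \eqref{eq:4.5}. This is harmless: the resulting term $Ch^{1-\rho\nu_0-\varepsilon}\|u\|_{\mathcal X}$ is absorbed once $\rho\nu_0<1$. That is where the $1/\rho$ in the paper's $\beta=\min\{\beta_0/\rho,1/\rho\}$ comes from.

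The real gap is the power counting, which is exactly what produces the factor $\frac43$. Three of your claims there fail.
\begin{enumerate}[(i)]
\item \eqref{eq:3.18} and \eqref{eq:3.29} give $\|Z_\pm(\omega)\|_{\mathcal Y\to\mathcal X}\le Ch^{-1-\rho\nu_0}$, not $h^{-1-\varepsilon}$. The operators $Z_\pm$ contain the same chains of $J^j_\pm$ as $J_-$.
\item The $\nu_0$ in \eqref{eq:3.21} and \eqref{eq:3.23} is the width of $\Omega$, so $\nu_0\ge-\Im\omega/h$ must hold on the whole strip. It therefore cannot be taken small relative to $\varepsilon$ if the strip reaches $\Im\lambda=-\beta$. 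Enlarging $n_0$ with $\varepsilon$ is not available either: it changes $a_\pm$, and the $\beta_0$ in \eqref{eq:4.5} depends on $n_0$ (the porosity constant in Lemma~\ref{obj:4.5} is $e^{-2n_0}T^{-1}\delta$).
\item Rescaling does not double exponents. We have $R(\lambda)=h^2(-h^2\Delta_g-\frac{h^2n^2}{4}-\omega^2)^{-1}$ with $h\sim|\lambda|^{-1}$, so a loss $h^{-1-\alpha\nu}$ becomes $|\lambda|^{-1+\alpha\nu}$ with the same $\alpha$. Your $h^{-1-\rho\nu}$ does not ``become'' $\frac43\nu$.
\end{enumerate}

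The paper's count runs as follows. On $\Omega$ one has $\nu_0-\Im\omega/h\le 2\nu_0$ and $kn_0\le\frac\rho2\log\frac1h+n_0$, so $\exp(kn_0(\nu_0-\Im\omega/h))\le Ch^{-\rho\nu_0}$. Hence $J_-$ costs $h^{-\rho\nu_0}$ and $Z_\pm$ cost $h^{-1-\rho\nu_0}$. The worst term, $J_-\Op_h^{L_s}(a_-)Z_+\mathcal P_h(\omega)u$, costs $h^{-1-2\rho\nu_0-\varepsilon}$. Absorption needs $\rho\nu_0<\beta_0$, and $2\rho<\frac43$ gives the stated factor. Only $\varepsilon_1$ in \eqref{eq:3.21} depends on $\nu_0$; $n_0$, and hence $a_\pm$, do not. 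So one may apply this with $\nu_0$ just above $\max\{0,-\Im\lambda\}$. With this count in place of yours, the argument is complete.

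Your sharper count can be rescued only by re-running the chains in Lemmas~\ref{obj:3.12} and~\ref{obj:3.13} with the raw per-step bound $\exp(-(n_0+T_0)\Im\omega/h)+\varepsilon_1$ from Lemma~\ref{obj:3.10}. With $n_0$ fixed and $\nu=\max\{0,-\Im\omega/h\}$, this gives a loss $h^{-\frac\rho2(1+T_0/n_0)\nu-\varepsilon}$ per chain. Since $n_0\ge 2T_0$ and $\rho<\frac23$, the total exponent is at most $\nu$, so \eqref{eq:4.6} would then follow a fortiori.
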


\begin{proof}
Define $\beta:=\min\{\frac{\beta_0}{\rho},\frac{1}{\rho}\}$ and fix $\nu_0<\beta$. Similarly to the proofs of \cite[Theorem 3]{DyZa}, \cite[Theorem 2]{Dy19a}, and \cite[Theorem 1.2]{Cu}, it suffices to prove that for any $u\in\mathcal X$, we have
\begin{equation}
\|u\|_{\mathcal X}\leq Ch^{-1-\frac43\max\{0,\nu_0\}-\varepsilon}\|\mathcal P_h(\omega)u\|_{\mathcal Y},
\qquad \omega\in\Omega:=[1-2h,1+2h]+ih[-\nu_0,\tfrac14].\label{eq:4.7}
\end{equation}
Indeed, using \eqref{eq:3.4} and the fact that $H_h^\phi\subset H_h^{s_-}$ (see the Remark following Lemma~\ref{obj:3.8}), the bound \eqref{eq:4.7} implies an $H_h^{s_--1}\to H_h^{s_-}$ bound on $\psi(-h^2\Delta_g-\frac{h^2n^2}{4}-\omega^2)^{-1}\psi$ for any $\psi\in C_c^\infty(M)$. Using an elliptic parametrix for $-h^2\Delta_g-\frac{h^2n^2}{4}-\omega^2$ (see, e.g., \cite[Proposition E.32]{DyZw}), we convert this to an $L^2\to L^2$ estimate, which implies \eqref{eq:4.6} after rescaling.

To prove \eqref{eq:4.7}, we first use \cite[Section 4.2.1]{ADM} to see that since $\chi_+\in C_c^\infty(T^*M\setminus0;[0,1])$ is independent of $h$,
\[
\Op_h^{L_u}(\chi_+)=\Op_h(\chi_+)+O(h)
\]
where $\Op_h$ is the standard semiclassical quantization of \cite[Section 4.1]{ADM}. Moreover, since $\chi_+=1$ on $\supp\chi_-$, we have
\[
\Op_h(\chi_+)=I+O(h^\infty)\quad\text{microlocally near }\supp\chi_-.
\]
Thus, from \eqref{eq:4.2},
\begin{equation}
I=Z_+(\omega)\mathcal P_h(\omega)+\Op_h^{L_u}(a_+)+O(h)\quad\text{microlocally near }\supp\chi_-.\label{eq:4.8}
\end{equation}
Since $\Op_h^{L_s}(a_-)$ is pseudolocal and its wavefront set is contained in $\supp\chi_-$, we obtain from \eqref{eq:4.8} and \eqref{eq:4.4} that for any $u\in\mathcal X$, and any $\varepsilon>0$,
\[
\begin{split}
\|u\|_{\mathcal X}\leq{}&Ch^{-1-2\rho\nu_0-\varepsilon}\|\mathcal P_h(\omega)u\|_{\mathcal Y}
+Ch^{-\rho \nu_0-\varepsilon}\|\Op_h^{L_s}(a_-)\Op_h^{L_u}(a_+)\|_{\mathcal X\to L^2}\|u\|_{\mathcal X}\\
&+Ch^{1-\rho \nu_0-\varepsilon}\|u\|_{\mathcal X},
\end{split}
\]
where we used the bounds \eqref{eq:3.18}, \eqref{eq:3.29}, and the fact that $\exp(kn_0(\nu_0-\Im\omega/h))\leq Ch^{-\rho \nu_0-\varepsilon}$ for any $\varepsilon$. Thus, if \eqref{eq:4.5} holds, we use that $\mathcal X\subset H^{\phi}_{h}\subset H_h^{s_-}\subset L^2$ to see that
\[
\|\Op_h^{L_s}(a_-)\Op_h^{L_u}(a_+)\|_{\mathcal X\to L^2}\leq Ch^{\beta_0}.
\]
Then, recalling that $\rho=\frac23(1-\varepsilon_0)\leq \frac{2}{3}$, and using that $\varepsilon$ is arbitrary, we easily obtain \eqref{eq:4.7}.
\end{proof}

It remains to prove \eqref{eq:4.5}. Although our symbols $a_\pm$ defined in \eqref{eq:4.1}, \eqref{eq:4.3} are somewhat different than those used in \cite[Section 5]{ADM}, they are of a sufficiently similar form that we may apply their approach with only minor modifications. We very closely follow the exposition of \cite[Section 5]{ADM}, indicating necessary changes.

As in \cite[Section 5.1.2]{ADM} we begin by decomposing the operator in \eqref{eq:4.5}, but adapt this decomposition to our situation: Let $q_1,\ldots,q_L\in\supp\chi_-$ be a maximal $h^{\rho/2}$-separated set, meaning $d(q_\ell,q_{\ell'})\geq h^{\rho/2}$ for all $\ell\neq\ell'$ and the balls $B(q_\ell;h^{\rho/2})$ cover $\supp\chi_-$. We then construct an $h$-dependent partition of unity
\[
\psi_\ell\in C_c^\infty(T^*M),\qquad \supp\psi_\ell\subset B(q_\ell;2h^{\rho/2}),\qquad \sum_{\ell=1}^L\psi_\ell^2=1\quad\text{on }\supp\chi_-,
\]
with the functions $\psi_\ell$ satisfying the derivative bounds for all multiindices $\alpha$,
\[
\sup|\partial^\alpha\psi_\ell|\leq C_\alpha h^{-\rho|\alpha|/2}.
\]
Notice that then for small $\varepsilon>0$, the symbols $a_+\psi_\ell$ and $a_-\psi_\ell$ are in $S^{\rm comp}_{L_u,\rho+\varepsilon,\rho/2}$ and $S^{\rm comp}_{L_s,\rho+\varepsilon,\rho/2}$ respectively.

In exactly the same way now as in \cite[Section 5.1.2]{ADM}, we decompose the operator in \eqref{eq:4.5} using this partition of unity, and the bound in \eqref{eq:4.5} follows by proving that
\begin{equation}
\max_\ell\|\Op_h^{L_s}(a_-\psi_\ell)\Op_h^{L_u}(a_+\psi_\ell)\|_{L^2\to L^2}\leq Ch^{\beta_0}.\label{eq:4.9}
\end{equation}

Next we will follow \cite[Section 5.3]{ADM} to prove an analogue of \cite[Lemma 5.5]{ADM}. First, we recall the notion of $\nu$-porous set:

\begin{defi}[{\cite[Definition 5.2]{ADM}}]\label{obj:4.2}
Let $\nu\in(0,1)$ and $0<\alpha_0\leq\alpha_1$. We say that a subset $\Omega\subset\mathbb R$ is $\nu$-porous on scales $\alpha_0$ to $\alpha_1$ if for each interval $I\subset\mathbb R$ of length $|I|\in[\alpha_0,\alpha_1]$, there exists a subinterval $J\subset I$ of length $|J|=\nu|I|$ such that $J\cap\Omega=\varnothing$.
\end{defi}

The importance of $\nu$-porous sets is that they satisfy the Fractal Uncertainty Principle (FUP), see \cite[Section 5.2]{ADM} for the version relevant to this paper. We do not review the FUP in any detail here, and we use it only indirectly at the end of our proof by concluding via the method of \cite[Section 5.5]{ADM}, which applies it in an essential way to prove a bound similar to \eqref{eq:4.5}.

We now fix $\ell$ and let $q_\ell$ be the corresponding point chosen in the construction of the partition of unity above. Let $\kappa_\ell:U_\ell\to T^*\mathbb R^{2n}$ be the symplectomorphism of Lemma~\ref{obj:2.1} applied with $q^0=q_\ell$. In particular, we recall from \eqref{eq:2.12}, \eqref{eq:2.13}, \eqref{eq:2.14} that
\[
\kappa_\ell(q_\ell)=0,\qquad d\kappa_\ell(q_\ell)V_\perp^+(q_\ell)=\ker d y_1,\qquad d\kappa_\ell(q_\ell)V_\perp^-(q_\ell)=\ker d\eta_1.
\]
Recall that $\rho=\frac23(1-\varepsilon_0)$ is fixed. We shall prove the following, whose statement is similar to \cite[Lemma 5.5]{ADM} but of course our symbols are different.

\begin{lem}\label{obj:4.3}
There exist sets $\Omega_\pm\subset\mathbb R$ such that
\begin{equation}
\kappa_\ell(\supp(a_+\psi_\ell))\subset\{(y,\eta):y_1\in\Omega_+\}\label{eq:4.10}
\end{equation}
\begin{equation}
\kappa_\ell(\supp(a_-\psi_\ell))\subset\{(y,\eta):\eta_1\in\Omega_-\}\label{eq:4.11}
\end{equation}
and the sets $\Omega_\pm$ are $\nu$-porous on scales $C_0h^\rho$ to $1$ for constants $\nu>0$ and $C_0$ which depend only on $(M,g)$, the (uniform in $\ell$) bounds on the derivatives of the $\kappa_\ell$, and the cutoffs $\chi_\pm$ defining the $a_\pm$, and in particular do not depend on $\ell$.
\end{lem}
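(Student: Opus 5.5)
We will follow the strategy of \cite[Lemma 5.5]{ADM}, treating $a_+$ (porosity of $\Omega_+$ in $y_1$); the case of $a_-$ is symmetric, using $\varphi^{+t}$, $V^-$ and $\eta_1$ in place of $\varphi^{-t}$, $V^+$ and $y_1$. We define $\Omega_+$ as the projection to $y_1$ of $\kappa_\ell(\supp(a_+\psi_\ell))$, i.e. the set of $y_1$ for which some $(y,\eta)$ with that first coordinate lies in $\kappa_\ell(\supp(a_+\psi_\ell))$. Then \eqref{eq:4.10} holds trivially, and it remains to prove porosity on scales $C_0h^\rho$ to $1$. Since $\supp\psi_\ell\subset B(q_\ell;2h^{\rho/2})$, and $\kappa_\ell$ has uniformly bounded derivatives, $\Omega_+$ is contained in an interval of length $O(h^{\rho/2})$ around $0$. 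Intervals $I$ with $|I|\geq Ch^{\rho/2}$ therefore automatically contain a gap of proportion $\nu$ (for $\nu\le1/3$, say), so only scales $|I|\in[C_0h^\rho,Ch^{\rho/2}]$ require work. Here we use that $\rho=\frac23(1-\varepsilon_0)$, so $h^{\rho/2}\gg h^\rho$; this ``slow direction of size $h^{\rho/2}$, fast direction down to $h^\rho$'' structure is exactly what the $1{:}2$ expansion rates in \eqref{eq:2.10} allow.

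Fix such an interval $I$ with center $y_1^0$, and set $\alpha:=c|I|^{1/2}$, so that $\alpha^2\sim|I|$ and $\alpha\gtrsim h^{\rho/2}$ dominates the size of $\kappa_\ell(\supp\psi_\ell)$. Consider the stable rectangle $R:=R^+_{q_\ell,y_1^0,\alpha}$ of Lemma~\ref{obj:2.2}. Choose $t\ge0$ with $\alpha e^t=c_1$ a small fixed constant, so that the hypothesis $\alpha e^t<1$ holds. We check that $mn_0\le t\le kn_0$ for some $m\le k$: since $\alpha\gtrsim h^{\rho/2}$, we have $t\lesssim\frac\rho2\log\frac1h\le kn_0$ up to an additive constant, and we absorb the constant by shrinking $c_1$ or slightly enlarging $C_0$. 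Lemma~\ref{obj:2.2} then gives $\operatorname{diam}\varphi^{-t}(R)\le C\alpha e^t=Cc_1$. Meanwhile, because $\varphi^{-t}$ expands the fast stable direction $V^+$ at rate $e^{2t}$, the image of the $y_1$-segment of length $\alpha^2\sim|I|$ in $R$ becomes a $V^+$-segment of length comparable to $\alpha^2e^{2t}\sim c_1^2$, up to a small bending controlled by the slow directions. We then rescale $c$ (equivalently, choose $t$ slightly larger) so that this length exceeds the constant $T$ from property (iii) of $U$. Property (iii) provides a point of $U$, and hence a point outside $\supp\chi_+$ where $\chi_+\circ\varphi^{-s}$ vanishes, on that $V^+$-segment. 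Since $U$ is open and conic, openness and uniform continuity on the compact set $\supp\chi_+$ give a whole sub-rectangle of $y_1$-width $\nu|I|$ whose $\varphi^{-t}$-image misses $\supp\chi_+$. Taking $m$ with $|mn_0-t|\le n_0$ and replacing $\chi_+$ by a slightly smaller neighborhood argument (using $\chi_+=1$ near $\supp\chi_-$ and flow-invariance of $U$ over bounded times), we get $a_+=0$ on the corresponding subset. Hence the subinterval $J\subset I$ of length $\nu|I|$ satisfies $J\cap\Omega_+=\varnothing$.

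The main obstacle is making this last step rigorous. The slices $\{y_1\in J\}$ of $\kappa_\ell(\supp(a_+\psi_\ell))$ must lie entirely inside $R^+_{q_\ell,y_1^0,\alpha}$ with $|y_1-y_1^0|\le\alpha^2$ restricted to $J$. We also need a uniform statement: for every point of the slab $\{y_1\in J\}\cap\kappa_\ell(\supp\psi_\ell)$, the flowed point $\varphi^{-mn_0}$ lands in a region where $\chi_+=0$. In \cite{ADM} this is done by comparing $\varphi^{-t}(R)$ with a single $V^+$-segment plus an $O(c_1)$-error. We will reproduce that argument using Lemma~\ref{obj:2.2} to control the error in the slow directions. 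We also need the noncompact caveat $\alpha e^t<1$, together with the fact that orbits staying in $\supp\chi_+$ remain in a fixed compact set. This makes all constants ($\nu$, $C_0$) depend only on $T$, $\chi_\pm$ and the uniform bounds on $\kappa_\ell$, and hence not on $\ell$.
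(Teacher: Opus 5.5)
\textbf{There is a genuine gap in your porosity step, at its core.}

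\textbf{Why the single-rectangle argument cannot work.} Near $q_\ell$ the quantity $dy_1(V^+)$ is bounded below. So a $V^+$-segment inside $R=R^+_{q_\ell,y_1^0,\alpha}$ has $y_1$-extent at most $2\alpha^2$ and hence length $O(\alpha^2)$. By \eqref{eq:2.10}, its image under $\varphi^{-t}$ has length $O(\alpha^2e^{2t})=O((\alpha e^t)^2)$.

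\begin{itemize}
\item In the regime $\alpha e^t=c_1\ll 1$, where Lemma~\ref{obj:2.2} applies and where your uniform-continuity step needs $\operatorname{diam}\varphi^{-t}(R)\le Cc_1$ small, this length is $O(c_1^2)$. That is far below the $T$ of property (iii) of $U$.
\item Enlarging $t$ (``rescaling $c$'') until the flowed segment reaches length $T$ forces $\alpha e^t\gtrsim\sqrt T$. Then Lemma~\ref{obj:2.2} gives no useful control, and you can no longer conclude that a whole slab $\{y_1\in J\}$ is mapped outside $\supp\chi_+$.
\item If instead you shrink $c$ so that the interval $I$, rather than $R$, is stretched to length $T$, then $R$ covers only a $c^2$-fraction of $I$. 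The point of $U$ given by (iii) need not lie over the part of $I$ that Lemma~\ref{obj:2.2} controls.
\end{itemize}

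In short, you ask one rectangle both to have small diameter after flowing and to contain a fast segment of macroscopic length. The $1{:}2$ rate mismatch rules this out.

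Separately, $U$ is not $\varphi^t$-invariant, so you cannot pass from $t$ to a multiple of $n_0$ by invoking invariance. The flow time should be an exact multiple $bn_0$ from the start, with the factor $e^{2n_0}$ absorbed into $\nu$ and $C_0$.

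\textbf{The missing idea: separate gap-finding from shadowing.} This is what the paper does, following \cite{ADM}. The $a_-$ case is written out in Lemmas~\ref{obj:4.4} and~\ref{obj:4.5}; the $a_+$ case is symmetric.

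\emph{Gap-finding on a reference orbit.} Gaps are found on the single fast orbit $s\mapsto e^{sV^+}(q_\ell)$. There $\varphi^{-t}$ maps a $V^+$-segment of length $|I|$ exactly to one of length $e^{2t}|I|$, so no smallness is needed.
\begin{itemize}
\item Choose $b\in\mathbb Z$ with $T\le e^{2bn_0}|I|\le e^{2n_0}T$. This produces a subsegment of length $\delta$ missing an open conic set $V'\supset\supp\chi_+$ with $\overline{V'}\cap\overline U=\varnothing$.
\item The condition $|I|\ge C_0'h^\rho$ guarantees $b\le k-C_2$.
\end{itemize}

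\emph{Shadowing via Lemma~\ref{obj:2.2}.} Each $\widetilde q\in\supp(a_+\psi_\ell)$ is paired with the reference point $e^{sV^+}(q_\ell)$ having the same $y_1$-coordinate.
\begin{itemize}
\item Both points lie in a rectangle with $\alpha=C_1^3h^{\rho/2}$, a size fixed by $\supp\psi_\ell$ and not by $|I|$, and with equal fast coordinate.
\item Hence they stay $C_1^{-1}$-close under $\varphi^{-jn_0}$ for all $j\le k-C_2$, which forces $\varphi^{-jn_0}(e^{sV^+}(q_\ell))\in V'$.
\end{itemize}

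\emph{Assembling the lemma.}
\begin{itemize}
\item Define $\widetilde\Omega_+$ as the set of such $s$.
\item Define $\Omega_+$ as its image under the diffeomorphism $s\mapsto y_1(\kappa_\ell(e^{sV^+}(q_\ell)))$, cut to $[-C_1h^{\rho/2},C_1h^{\rho/2}]$.
\item Shadowing gives \eqref{eq:4.10}.
\item Reference-orbit stretching gives porosity of $\widetilde\Omega_+$, and \cite[Lemma 2.12]{DJN} transfers it to $\Omega_+$.
\end{itemize}

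The time budgets match: stretching scale $h^\rho$ to scale $T$ takes time about $\frac\rho2\log\frac1h$, which is exactly the shadowing time allowed by $\alpha\sim h^{\rho/2}$.

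Your choice of $\Omega_+$ as the exact $y_1$-projection of the support is harmless. It is a subset of this set, and porosity passes to subsets.
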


We will prove \eqref{eq:4.11}, with \eqref{eq:4.10} proved analogously. Recall that
\begin{equation}
a_-=\prod_{m=0}^k\chi_-\circ\varphi^{mn_0},\label{eq:4.12}
\end{equation}
with $\chi_-\in C_c^\infty(T^*M\setminus0;[0,1])$, $\chi_-=1$ on $K\cap\{|\zeta|_g=1\}$. Moreover, $\supp\chi_-\subset(T^*M\setminus U)^\circ$ where $U$ is the conic open set in $T^*M\setminus0$ constructed using Lemma~\ref{obj:2.4} at the beginning of the section. Without loss, we may assume that $\supp\chi_-\subset\{\frac12\leq|\zeta|_g\leq2\}$.

Let $V\subset T^*M\setminus0$ be a closed, conic set with $\supp\chi_-\subset V^\circ$ and $V\cap\overline U=\varnothing$. Then let $V'$ be an open conic set with $V\subset V'$, $\overline{V'}\cap\overline U=\varnothing$. Note that from \eqref{eq:4.12} this implies
\begin{equation}
\supp a_-\subset\left(\bigcap_{m=0}^k\varphi^{-mn_0}(V)\right)\cap\left\{\frac14\leq|\zeta|_g\leq4\right\}.\label{eq:4.13}
\end{equation}
As in \cite[pg. 51]{ADM}, we now fix a large constant $C_1$ such that (note (1), (2), (3) below are the same as \cite[pg. 51]{ADM}, but (4) is adapted to our situation):
\begin{enumerate}[(1)]
\item We have
\begin{equation}
\supp\psi_\ell\subset\kappa_\ell^{-1}(\{(y,\eta):|y|+|\eta|\leq C_1h^{\rho/2}\}).\label{eq:4.14}
\end{equation}
\item We have the upper bounds on the derivatives of the trajectory $s\mapsto\kappa_\ell(e^{sV^-}(q_\ell))$
\begin{equation}
|\partial_s y(\kappa_\ell(e^{sV^-}(q_\ell)))|+|\partial_s\eta(\kappa_\ell(e^{sV^-}(q_\ell)))|\leq C_1,\qquad s\in[-C_1^{-1},C_1^{-1}],\label{eq:4.15}
\end{equation}
\[
|\partial_s^2\eta_1(\kappa_\ell(e^{sV^-}(q_\ell)))|\leq C_1,\qquad s\in[-C_1^{-1},C_1^{-1}].
\]
\item We have the lower bound on the derivative of the $\eta_1$-component of the above trajectory:
\begin{equation}
|\partial_s\eta_1(\kappa_\ell(e^{sV^-}(q_\ell)))|\geq C_1^{-1},\qquad s\in[-C_1^{-1},C_1^{-1}].\label{eq:4.16}
\end{equation}
\item The distance between $V\cap\{\frac14\leq|\zeta|_g\leq4\}$ and $T^*M\setminus V'$ is at least $C_1^{-1}$:
\begin{equation}
q\in V\cap\{\tfrac14\leq|\zeta|_g\leq4\},\quad d(q,q')\leq C_1^{-1}\quad\Longrightarrow\quad q'\in V'.\label{eq:4.17}
\end{equation}
\end{enumerate}

We now define analogues of $\widetilde\Omega_-,\Omega_-$ from \cite[(5.26), (5.27)]{ADM}: Let $C_2$ be a large integer independent of $h$. We choose a more precise $C_2$ in a moment. Set
\begin{equation}
\widetilde\Omega_-:=\left\{s\in[-C_1^{-1},C_1^{-1}]:e^{sV^-}(q_\ell)\in\bigcap_{j=0}^{k-C_2}\varphi^{-jn_0}(V')\right\}.\label{eq:4.18}
\end{equation}
Then we take
\begin{equation}
\Omega_-:=\eta_1\left(\kappa_\ell\left(\{e^{sV^-}(q_\ell):s\in\widetilde\Omega_-\}\right)\right)\cap[-C_1h^{\rho/2},C_1h^{\rho/2}].\label{eq:4.19}
\end{equation}
Our Lemma~\ref{obj:4.3} follows from Lemmas~\ref{obj:4.4} and \ref{obj:4.5} below, which are analogues of \cite[Lemmas 5.6 and 5.7]{ADM} respectively. The proofs are very similar to their analogues in \cite{ADM}, and we will repeat much of them with minimal changes, but include complete proofs here to make clear the somewhat subtle modifications needed to adapt them to our setting.

We begin with the following analogue of \cite[Lemma 5.6]{ADM}:

\begin{lem}\label{obj:4.4}
For $C_2$ large enough depending only on $(M,g)$, the derivative bounds for the $\kappa_\ell$, and $C_1$, the inclusion \eqref{eq:4.11} holds.
\end{lem}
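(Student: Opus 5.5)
Fix $q\in\supp(a_-\psi_\ell)$ and write $\kappa_\ell(q)=(y,\eta)$. We must show $\eta_1\in\Omega_-$, i.e.\ find $s\in\widetilde\Omega_-$ with $\eta_1(\kappa_\ell(e^{sV^-}(q_\ell)))=\eta_1$ and $|\eta_1|\le C_1h^{\rho/2}$. The second condition is immediate from \eqref{eq:4.14}. For the first, consider the curve $s\mapsto\eta_1(\kappa_\ell(e^{sV^-}(q_\ell)))$ on $[-C_1^{-1},C_1^{-1}]$. It vanishes at $s=0$, and by \eqref{eq:4.16} and the second derivative bound in \eqref{eq:4.15} it is strictly monotone with derivative at least $C_1^{-1}$. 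So it covers an interval of length $\gtrsim C_1^{-2}$ around $0$, which contains $[-C_1h^{\rho/2},C_1h^{\rho/2}]$ for $h$ small. Hence there is a unique $s$ with $|s|\le C_1^2h^{\rho/2}$ and $\eta_1(\kappa_\ell(e^{sV^-}(q_\ell)))=\eta_1$. Set $q':=e^{sV^-}(q_\ell)$. It then remains to show $q'\in\varphi^{-jn_0}(V')$ for $0\le j\le k-C_2$.

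The key geometric input is Lemma~\ref{obj:2.2}. The points $q$ and $q'$ both lie in $\kappa_\ell^{-1}(\{|y|+|\eta|\le Ch^{\rho/2}\})$: for $q$ this is \eqref{eq:4.14}, and for $q'$ it follows from \eqref{eq:4.15} since $|s|\lesssim h^{\rho/2}$. They also have the same $\eta_1$ coordinate. Hence both lie in a slow unstable rectangle $R^-_{q_\ell,\eta_1,\alpha}$ with $\alpha=Ch^{\rho/2}$; the width condition $|\eta_1-\eta_1^0|\le\alpha^2$ holds trivially with $\eta_1^0=\eta_1$. Lemma~\ref{obj:2.2} then gives
\[
d(\varphi^t(q),\varphi^t(q'))\le C\alpha e^t\quad\text{whenever }\alpha e^t<1 .
\]
We apply this with $t=jn_0$ for $j\le k-C_2$. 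Since $kn_0\le\frac\rho2\log\frac1h+n_0$, we get $\alpha e^{jn_0}\le Ch^{\rho/2}h^{-\rho/2}e^{(1-C_2)n_0}=Ce^{-(C_2-1)n_0}$. Taking $C_2$ large, depending only on $C_1$, the constant of Lemma~\ref{obj:2.2} (hence on $(M,g)$ and the $\kappa_\ell$ bounds) and $n_0\ge1$, makes this quantity both $<1$ and $\le C_1^{-1}$.

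Now $q\in\supp a_-$, so by \eqref{eq:4.13} we have $\varphi^{jn_0}(q)\in V\cap\{\frac14\le|\zeta|_g\le4\}$. By \eqref{eq:4.17}, $\varphi^{jn_0}(q')\in V'$, i.e.\ $q'\in\varphi^{-jn_0}(V')$, so $s\in\widetilde\Omega_-$ as required.

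There are two points needing care, and the main obstacle is the second. First, $\varphi^t$ is the flow on $T^*M\setminus0$ and is homogeneous, whereas $q_\ell$ and $q$ may have $|\zeta|_g\neq1$. Since $V,V'$ are conic and $e^{sV^-}$ is extended homogeneously, this causes no problem; one only has to check that the distance in \eqref{eq:4.17} is taken in the region $\{\frac14\le|\zeta|_g\le4\}$ where Lemma~\ref{obj:2.2}'s bound applies. Second, one must verify that Lemma~\ref{obj:2.2} applies uniformly in $\ell$ and with $\alpha\sim h^{\rho/2}$, including the requirement $|\eta_1^0|\le\alpha$. That requirement holds because $|\eta_1|\le C_1h^{\rho/2}$. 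The resulting constants are uniform because the $\kappa_\ell$ have uniform derivative bounds, which is exactly why $C_2$ depends only on the stated data.
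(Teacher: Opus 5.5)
Your proposal is correct and follows the paper's proof essentially step for step. You find $s$ with $|s|\le C_1^2h^{\rho/2}$ via \eqref{eq:4.16}, place $q$ and $e^{sV^-}(q_\ell)$ in a common slow unstable rectangle of size $\alpha\sim C_1^3h^{\rho/2}$ with identical $\eta_1$, apply Lemma~\ref{obj:2.2} at times $jn_0\le(k-C_2)n_0$, and conclude with \eqref{eq:4.13} and \eqref{eq:4.17}. Two minor remarks: your explicit check that large $C_2$ also enforces the hypothesis $\alpha e^t<1$ of Lemma~\ref{obj:2.2} is a point the paper leaves implicit, and the second-derivative bound is not needed for monotonicity, since \eqref{eq:4.16} already holds on the whole interval.
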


\begin{proof}
1. We follow the proof of \cite[Lemma 5.6]{ADM}. It suffices to show that for arbitrary $\widetilde q\in\supp(a_-\psi_\ell)$, we have
\begin{equation}
\widetilde\eta_1\in\Omega_-\qquad\text{where}\qquad \widetilde\eta_1=\eta_1(\kappa_\ell(\widetilde q)).\label{eq:4.20}
\end{equation}
To see this, first note that $|\widetilde\eta_1|\leq C_1h^{\rho/2}$ by \eqref{eq:4.14}, and thus for small enough $h$ depending on $C_1$, we have $|\widetilde\eta_1|\leq C_1^{-2}$. From \eqref{eq:4.16} and using that $\kappa_\ell(q_\ell)=0$, there exists $s\in\mathbb R$ such that $|s|\leq C_1^2h^{\rho/2}\leq C_1^{-1}$ and
\[
\eta_1(\kappa_\ell(e^{sV^-}(q_\ell)))=\widetilde\eta_1.
\]
To prove \eqref{eq:4.20}, it suffices to show that $s\in\widetilde\Omega_-$.

2. Using \eqref{eq:4.14} and \eqref{eq:4.15}, we see that $\widetilde q$ and $e^{sV^-}(q_\ell)$ lie in
\[
R^-:=\kappa_\ell^{-1}(\{(y,\eta):|y|+|\eta|\leq C_1^3h^{\rho/2},\ \eta_1=\widetilde\eta_1\}).
\]
Using Lemma~\ref{obj:2.2} with $\alpha=C_1^3h^{\rho/2}$, we find $C_3>0$ depending only on $(M,g)$ and the derivative bounds on the $\kappa_\ell$ such that for all $t\geq0$ with $C_1^3h^{\rho/2}e^t<1$, we have
\[
d(\varphi^t(\widetilde q),\varphi^t(e^{sV^-}(q_\ell)))\leq C_3C_1^3h^{\rho/2}e^t.
\]
Take $C_2$ large enough so that
\[
C_3C_1^3e^{(1-C_2)n_0}\leq C_1^{-1}.
\]
Then for any $j\in\{0,1,\ldots,k-C_2\}$, recalling that $\rho=\frac23(1-\varepsilon_0)$ and the definition of $k$ as the least integer such that $\frac\rho2\log\frac1h\leq kn_0$ (and hence $kn_0\leq\frac\rho2\log\frac1h+n_0$), we have
\begin{equation}
d(\varphi^{jn_0}(\widetilde q),\varphi^{jn_0}(e^{sV^-}(q_\ell)))\leq C_3C_1^3e^{(1-C_2)n_0}\leq C_1^{-1}.\label{eq:4.21}
\end{equation}
But $\varphi^{jn_0}(\widetilde q)\in V\cap\{\frac14\leq|\zeta|_g\leq4\}$ by \eqref{eq:4.13}, so by \eqref{eq:4.21} and \eqref{eq:4.17}, we see that $\varphi^{jn_0}(e^{sV^-}(q_\ell))\in V'$. It follows that $s\in\widetilde\Omega_-$ and the proof is complete.
\end{proof}

Next we prove the analogue of \cite[Lemma 5.7]{ADM}:

\begin{lem}\label{obj:4.5}
The set $\Omega_-$ defined in \eqref{eq:4.19} is $\nu$-porous on scales $C_0h^\rho$ to $1$ for some constants $\nu>0$ and $C_0$ which depend only on the sets $V',U$, and the constants $C_1,C_2$.
\end{lem}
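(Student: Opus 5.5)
We follow the proof of \cite[Lemma 5.7]{ADM}, modified to use the set $U$ from Lemma~\ref{obj:2.4} and the ``avoidance'' condition $\overline{V'}\cap\overline U=\varnothing$ in place of the compactness argument there. Since $\eta_1\circ\kappa_\ell$ restricted to the fast unstable curve $s\mapsto e^{sV^-}(q_\ell)$ is a diffeomorphism onto its image, with derivative bounded above and below by \eqref{eq:4.15}, \eqref{eq:4.16}, porosity of $\Omega_-$ follows from porosity of $\widetilde\Omega_-\subset[-C_1^{-1},C_1^{-1}]$. The constant $\nu$ changes only by a factor depending on $C_1$, and the scales change by constant factors. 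Intervals longer than the range of $\Omega_-$, namely $2C_1h^{\rho/2}$, trivially contain large gaps, because $\Omega_-$ lies in $[-C_1h^{\rho/2},C_1h^{\rho/2}]$. So it suffices to show that $\widetilde\Omega_-$ is $\nu'$-porous on scales $C h^\rho$ to $c$.

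\textbf{Key step.} Fix an interval $I\subset[-C_1^{-1},C_1^{-1}]$ (intervals extending outside are handled trivially) with $|I|\in[C h^\rho,c]$, and choose $j\in\{0,\dots,k-C_2\}$ with $e^{-2jn_0}|I|$ comparable to a fixed small constant $\delta$. This is possible because $e^{2(k-C_2)n_0}\gtrsim h^{-\rho}$ by the choice of $k$ (using $kn_0\ge\frac\rho2\log\frac1h$ and $C_2$ fixed), and because $|I|\geq Ch^\rho$. The conjugation identity $\varphi^t e^{sV^-}=e^{se^{2t}V^-}\varphi^t$, which follows from $[X,V^-]=-2V^-$ in \eqref{eq:2.7}, gives
\[
\varphi^{jn_0}\bigl(e^{sV^-}(q_\ell)\bigr)=e^{se^{2jn_0}V^-}\bigl(\varphi^{jn_0}(q_\ell)\bigr).
\]
Hence the image under $\varphi^{jn_0}$ of the segment $\{e^{sV^-}(q_\ell):s\in I\}$ is a $V^-$-segment of length $e^{2jn_0}|I|$, which we arrange to be at least $T$, where $T$ comes from property (iii) of Lemma~\ref{obj:2.4}. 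Note that $V^-$ is extended homogeneously, and $q_\ell\in\supp\chi_-$ has $|\zeta|_g\in[\frac12,2]$; we handle this by restricting to $S^*M$ after rescaling, since $U$ is conic. By (iii), the segment meets $U$ at some parameter $s_0\in I$. Because $U$ is open, $\overline U\cap\overline{V'}=\varnothing$, and everything stays in a compact region, there is a uniform $r>0$ such that every point within distance $r$ of $e^{s_0e^{2jn_0}V^-}\varphi^{jn_0}(q_\ell)$ along the $V^-$-orbit lies outside $V'$. This uniformity is obtained by a compactness argument on $\{q:d(q,\overline{V'})\}$ restricted to the compact set $\{\frac14\le|\zeta|\le4\}\cap\overline{U\setminus(\text{nbhd})}$; equivalently, we replace $U$ by a slightly shrunk open set $U'\Subset$ still satisfying (iii). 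Pulling back gives a subinterval $J\subset I$ of length $\sim r e^{-2jn_0}\sim (r/\delta)|I|$ on which $\varphi^{jn_0}(e^{sV^-}(q_\ell))\notin V'$, so $J\cap\widetilde\Omega_-=\varnothing$. This yields porosity with $\nu'\sim r/(T e^{2n_0})$.

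\textbf{Expected obstacle.} The main obstacle is making the gap size uniform. The discretization of time in steps $n_0$ costs a factor $e^{2n_0}$ in the comparability of lengths, which only affects constants. The delicate part is obtaining the radius $r$ independently of the point, which requires the noncompact setting to be handled via the compactness of $S^*M\setminus U$ and of $\overline{V'}\cap\{\frac14\le|\zeta|_g\le4\}$. We also need the factor $2$ in the exponent (the fast expansion rate) to match the scale $h^\rho$ with $kn_0\approx\frac\rho2\log\frac1h$. This matching is exactly why we use the fast direction $V^-$ here, and why $\rho<1$ suffices.
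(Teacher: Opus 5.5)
Your proposal is correct and follows essentially the same route as the paper: you get a uniform gap along $V^-$-segments of length $T$ from compactness of $\overline{V'}\cap S^*M$ together with $\overline{V'}\cap\overline U=\varnothing$ (your $r$, the paper's $\delta$). You then choose a time $jn_0\le (k-C_2)n_0$ so that the fast expansion $e^{2jn_0}$ stretches $I$ to length between $T$ and $e^{2n_0}T$, pull the gap back to get $\nu'\sim e^{-2n_0}T^{-1}r$, and transfer porosity to $\Omega_-$ via the diffeomorphism $s\mapsto\eta_1(\kappa_\ell(e^{sV^-}(q_\ell)))$ and \cite[Lemma 2.12]{DJN}. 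One slip to fix: your intermediate normalization ``$e^{-2jn_0}|I|$ comparable to $\delta$'' with ``$|J|\sim(r/\delta)|I|$'' should read $T\le e^{2jn_0}|I|\le e^{2n_0}T$ and $|J|\ge re^{-2jn_0}\ge (r/(Te^{2n_0}))|I|$, which is what the $\nu'$ you state at the end actually uses.
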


\begin{proof}
1. We follow the proof of \cite[Lemma 5.7]{ADM}. Recall that $U$ was defined via Lemma~\ref{obj:2.4} for both $V^\pm$ and then extended to an open conic set in $T^*M\setminus0$. In particular, $U\cap S^*M$ is $V^-$-dense. By Lemma~\ref{obj:2.4} (iii), there exists $T\geq1$ such that each $V^-$-segment of length $T$ in $S^*M$ intersects $U$. Since $V'\cap\overline U=\varnothing$ and $S^*M\setminus U$ is compact, there exists $\delta>0$ such that each $V^-$-segment of length $T$ in $S^*M$ has a subsegment of length $\delta$ that does not intersect $V'$. Since the vector field $V^-$ is extended homogeneously from $S^*M$ to $T^*M\setminus0$ and $V'$ is a conic set, the previous statements extend to all $V^-$-segments of length $T$ in $T^*M\setminus0$.

We define constants
\[
\nu':=e^{-2n_0}T^{-1}\delta,\qquad C_0':=e^{2n_0(C_2+1)}T.
\]
2. Next we show that the set $\widetilde\Omega_-$ from \eqref{eq:4.18} is $\nu'$-porous on scales $C_0'h^\rho$ to $1$. We note that from \eqref{eq:2.10} it follows that for each $t\in\mathbb R$, the image under $\varphi^t$ of a $V^-$-segment of length $\alpha$ is a $V^-$-segment of length $e^{2t}\alpha$.

Fix an interval $I\subset\mathbb R$ of length $|I|\in[C_0'h^\rho,1]$, and choose $b\in\mathbb Z$ such that
\[
T\leq e^{2bn_0}|I|\leq e^{2n_0}T.
\]
Then $b\geq0$ since $|I|\leq1\leq T$, and moreover we have $C_0'h^\rho\leq|I|\leq e^{2n_0-2bn_0}T$. Recalling the definition of $k=k(h)$ as the smallest integer such that $\frac\rho2\log\frac1h\leq kn_0$, we see that
\[
b\leq\frac\rho{2n_0}\log\frac1h-C_2\leq k-C_2.
\]
The set $\Gamma_I:=\{e^{sV^-}(q_\ell):s\in I\}$ is a $V^-$-segment in $T^*M\setminus0$ of length $|I|$. By the remark above, then, $\varphi^{bn_0}(\Gamma_I)$ is a $V^-$-segment of length $e^{2bn_0}|I|\geq T$. Using part 1 of the proof, there exists a subsegment of $\varphi^{bn_0}(\Gamma_I)$ of length $\delta$ that does not intersect $V'$. This subsegment can be written as $\varphi^{bn_0}(\Gamma_J)$ with $\Gamma_J=\{e^{sV^-}(q_\ell):s\in J\}$ and $J\subset I$ a subinterval of length
\[
|J|=e^{-2bn_0}\delta\geq\nu'|I|.
\]
From this construction, we see that $\varphi^{bn_0}(e^{sV^-}(q_\ell))\notin V'$ for all $s\in J$. By \eqref{eq:4.18}, this shows that $J\cap\widetilde\Omega_-=\varnothing$, finishing the proof of porosity of $\widetilde\Omega_-$.

3. The porosity of the set $\Omega_-$ now also follows as in the proof of \cite[Lemma 5.7]{ADM}. Define $\psi(s):=\eta_1(\kappa_\ell(e^{sV^-}(q_\ell)))$ for $|s|\leq C_1^{-1}$. By \eqref{eq:4.15} and \eqref{eq:4.16}, $\psi$ extends to a diffeomorphism of $\mathbb R$ satisfying
\[
\max(\sup|\psi'|,\sup|{\psi'}|^{-1},\sup|\psi''|)\leq2C_1.
\]
Then \eqref{eq:4.19} shows that $\Omega_-\subset\psi(\widetilde\Omega_-)$. Porosity of $\widetilde\Omega_-$ along with \cite[Lemma 2.12]{DJN} prove that $\Omega_-$ is $\nu$-porous on scales $C_0h^\rho$ to $\alpha_1$ with
\[
\nu:=\frac12\nu',\qquad C_0:=2C_1C_0',\qquad \alpha_1:=\frac12C_1^{-3}.
\]
Finally, as $\Omega_-\subset[-C_1h^{\rho/2},C_1h^{\rho/2}]$, it follows that $\Omega_-$ is also $\nu$-porous on scales $\alpha_1$ to $1$ if $h$ is small enough depending on $C_1$.
\end{proof}

Having proved the porosity of the projection of the supports of the symbols $a_\pm\psi_\ell$ under the symplectomorphism $\kappa_\ell$ (Lemma~\ref{obj:4.3}), the remainder of the proof of \eqref{eq:4.5} now follows exactly as in \cite[Section 5.5]{ADM}: We conjugate the operators $\Op_h^{L_s}(a_-\psi_\ell)$ and $\Op_h^{L_u}(a_+\psi_\ell)$ by the semiclassical Fourier integral operators constructed in \cite[Section 5.4]{ADM} and this localizes the operators to porous sets in both position $y_1$ and frequency $\eta_1$. The bound \eqref{eq:4.5} and hence the conclusion of our Theorem then follow from the Fractal Uncertainty Principle of \cite{BoDy18} as generalized by \cite{DJN}. We refer to \cite[Section 5.5]{ADM} for the details, and conclude the proof.

\printbibliography

\end{document}